\newtheorem{definition}{Definition}[section]
\newtheorem{theorem}[definition]{Theorem}
\newtheorem{lemma}[definition]{Lemma}
\newtheorem{claim}[definition]{Claim}
\newtheorem{fact}{Fact}
\newtheorem{conjecture}[definition]{Conjecture}
\title{Bipartite graphs with the double Hall property}
\date{\relax}
\author{
    Guantao Chen\thanks{Department of Mathematics and Statistics, Georgia State University, Atlanta, GA 30303, gchen@gsu.edu. Research of this author was partially supported by NSF grant DMS-2154331.} \and
    Mikhail Lavrov\thanks{misha.p.l@gmail.com.} \and
    Yuying Ma\thanks{Department of Mathematics and Statistics, Georgia State University, Atlanta, GA 30303, yma29@gsu.edu.} \and
    Yimo Su\thanks{Department of Mathematics and Statistics, Georgia State University, Atlanta, GA 30303, ysu12@gsu.edu.} \and
    Jennifer Vandenbussche\thanks{Department of Mathematics, Kennesaw State University, Marietta, GA 30060, jvandenb@kennesaw.edu.}
}
\begin{document}
\maketitle

\begin{abstract}
The \textit{super-neighborhood} of a vertex set $A$ in a graph $G$, denoted by $\mathsf{\Lambda}^2(A)$,  is the set of vertices adjacent to  at least two vertices in $A$.  We say that a bipartite graph $G=(X, Y)$ with $|X| \geq 2$ satisfies the {\em double Hall property} (with respect to $X$) if $|\mathsf{\Lambda}^2(A)| \ge |A|$ for any subset $A \subseteq X$ with $|A| \geq 2$.
Kostochka et al. first conjectured that if a bipartite graph $G=(X, Y)$ satisfies a slightly weaker version of the double Hall property, then $G$ contains a cycle that covers all vertices of $X$. They verified their conjecture for $|X|\le 6$. In this paper, we extend their result to $|X| =7$. Later, Salia conjectured that every bipartite graph satisfying the double Hall property has a cycle covering all vertices of $X$. We show that Salia's conjecture is almost equivalent to a much weaker conjecture requiring vertices in $Y$ to have high degrees. By extending a result of Bar\'at et al., we also show that Salia’s conjecture holds for some graphs where the vertices of $Y$ have degree either $2$ or very high.
Finally, we establish a lower bound for the maximum degree of graphs satisfying the double Hall property and present deterministic and probabilistic constructions of such graphs that approach this bound.

\vspace{0.3cm}
\textit{Keywords:} Cycles covering vertices, Hall's condition, Degrees, Neighborhood. 
\end{abstract}

\section{Introduction} 
In this paper, all graphs considered are {\em simple graphs}  -- graphs with a finite number of vertices and with no loops or parallel edges. Let $G$ be a graph with vertex set $V(G)$ and edge set $E(G)$. For a vertex $v \in V(G)$, we write  $\mathsf{\Lambda}_G(v)$ for the set of vertices in $G$ adjacent to $v$, and set $\deg_G(v) := |\mathsf{\Lambda}_G(v)|$.
The graph $G$ is {\em $d$-regular} if $\deg_G(v) = d$ for all $v \in V(G)$.
For any vertex set $S\subseteq V(G)$, the neighborhood $\mathsf{\Lambda}_G(S)$ of 
$S$ is the set of vertices in $G$ adjacent to an element of $S$. More generally, we define $\mathsf{\Lambda}^i_G(S)$ to be the set of all vertices in $G$ that are adjacent to at least $i$ elements of $S$. We omit the subscript when the graph $G$ is clear from context. In addition, for any set $S\subseteq V(G)$, we denote $G[S]$ as the subgraph of $G$ induced by $S$.

A bipartite graph with partitions $X$ and $Y$ is called a {\em bigraph} if the ordered pair $(X,Y)$ of its vertex partition is fixed. This paper discusses properties of bigraphs satisfying two related neighborhood conditions.
A bigraph $G=(X, Y)$ with $|X| \ge 3$ is said to satisfy the {\em super-neighborhood property} (or ``$G$ is snp'' for short) if, for each $S \subseteq X$ with $|S| \geq 3$, $|\mathsf{\Lambda}^2(S)| \geq |S|$ and $G[S \cup \mathsf{\Lambda}^2(S)]$ is $2$-connected.

The super-neighborhood property was first proposed by Kostochka et al.~\cite{superneighbor}, and is closely tied to the concept of super-pancyclicity in hypergraph theory. A {\em hypergraph} $H = (V(H), E(H))$ consists of a vertex set $V(H)$ and an edge set $E(H)$, where each edge is a subset of $V(H)$. A hypergraph $H$ is said to be {\em super-pancyclic} if, for every subset $A \subseteq V(H)$ with $|A| \ge 3$, $H$ contains a Berge cycle whose base vertex set is $A$. The {\em incidence graph} of a hypergraph $H$ is a bigraph $G$ with partitions $X = V(H)$ and $Y = E(H)$, and an edge in $G$ between $x \in X$ and $y \in Y$ exists if and only if $y$ is an edge in $H$ containing the vertex $x$. This bipartite representation allows us to study hypergraph properties through their incidence graphs. We call $G$ \emph{($X$-)supercyclic} if, for every subset $X' \subseteq X$ with $|X'| \ge 3$, $G$ contains a cycle $C$ such that $V(C) \cap X = X'$. Clearly, a hypergraph $H$ is super-pancyclic if and only if its incidence graph $G$ is supercyclic. This equivalence underscores the importance of incidence bigraphs in studying super-pancyclic hypergraphs. In this context, it is easy to see that every supercyclic bigraph must satisfy the super-neighborhood property. In~\cite{superneighbor}, Kostochka et al. conjecture that the converse also holds:

\begin{conjecture}[Kostochka et al.~\cite{superneighbor}]\label{con-KLLZ21} If a bigraph $G=(X,Y)$ is snp, then there is a cycle containing all vertices of $X$.    
\end{conjecture}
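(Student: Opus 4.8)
The plan is to prove Conjecture~\ref{con-KLLZ21} by induction on $|X|$, combining the extremal method (a cycle meeting $X$ in as many vertices as possible) with the two ingredients bundled into the snp condition: the Hall-type bound $|N^2(S)|\ge|S|$ and the $2$-connectivity of $G[S\cup N^2(S)]$. The base case $|X|=3$ is immediate: $G[X\cup N^2(X)]$ is $2$-connected, in any $2$-connected graph every three vertices lie on a common cycle, and since $X$ has only three vertices this cycle covers $X$.

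For the inductive step, assume the statement for all bigraphs with fewer than $n$ vertices in the $X$-part and let $G=(X,Y)$ be snp with $|X|=n\ge4$. Pick a cycle $C$ in $G$ maximizing $|V(C)\cap X|$ and, among those, of minimum length; set $X'=V(C)\cap X$. Applying the base-case observation to any triple of $X$ shows $|X'|\ge3$. Suppose for contradiction $X'\subsetneq X$ and fix $x_0\in X\setminus X'$. The key observation is that, with $S=X'\cup\{x_0\}$ (so $|S|\ge4$), the cycle $C$ lies entirely inside the $2$-connected graph $H:=G[S\cup N^2(S)]$: every $X$-vertex of $C$ lies in $X'\subseteq S$, and every $Y$-vertex of $C$ is adjacent to its two $C$-neighbors, which lie in $X'$, hence belongs to $N^2(X')\subseteq N^2(S)$. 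Since $x_0\in V(H)\setminus V(C)$ and $H$ is $2$-connected, Menger's theorem (the fan lemma) yields two $x_0$--$C$ paths $P_1,P_2$ inside $H$ that are internally disjoint, internally avoid $C$, and end at distinct vertices $w_1,w_2\in V(C)$.

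Rerouting $C$ by replacing one of its two arcs between $w_1$ and $w_2$ with $w_1\,P_1^{-1}\,x_0\,P_2\,w_2$ produces a new cycle whose intersection with $X$ changes by $\bigl(\{x_0\}\cup(\text{interior $X$-vertices of }P_1\cup P_2)\bigr)$ minus the interior $X$-vertices of the deleted arc; to contradict the maximality of $|X'|$ we need a choice for which this change is strictly positive. \textbf{This is where I expect the real difficulty to lie.} The fan lemma does not let us prescribe $w_1,w_2$, and in the worst case (both $w_i\in Y$, $P_1,P_2$ single edges, and $w_1,w_2$ separated on the shorter arc by a single vertex of $X$) the reroute yields no net gain. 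To overcome this I would: (i) exploit the freedom in choosing $S$ --- enlarging it by further vertices of $X\setminus X'$, or passing to a minimal ``critical'' sub-configuration on which $2$-connectivity is tight --- to force the fan to land near an edge of $C$ incident to an $X$-vertex, where a reroute always gains; and (ii) when every fan fails, apply the Hall bound $|N^2(S')|\ge|S'|$ to the set $S'$ consisting of $x_0$ together with the $X$-vertices on the obstructing segments of $C$, extracting a surplus $Y$-vertex that supplies an alternative routing, then iterate along $C$.

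A second line of attack I would pursue in parallel is to pass to the auxiliary graph $G^2$ on vertex set $X$ in which $x\sim x'$ whenever $x$ and $x'$ have a common neighbor in $Y$. A cycle covering $X$ in $G$ is exactly a Hamilton cycle of $G^2$ for which the common neighbors of consecutive pairs admit a system of distinct representatives. The snp condition on triples forces $G^2$ to be $2$-connected with controlled local structure, which should suffice for Hamiltonicity; the delicate point is then realizability --- a nested Hall condition coupling the chosen Hamilton cycle with the distinctness requirement on $Y$. Reconciling these two constraints in general is presumably the obstacle that currently limits the conjecture to $|X|\le7$, and it is the step I would budget the most effort for.
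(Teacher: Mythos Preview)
The statement you are attempting to prove is Conjecture~\ref{con-KLLZ21}: it is \emph{open}, and the paper does not prove it. The only result in this direction that the paper establishes is Theorem~\ref{thm:cycle-for-7}, the case $|X|\le 7$, and that proof is not an inductive argument of the type you outline but a detailed case analysis: one passes to a saturated-critical, snp-minimal counterexample, fixes a cycle $C$ through six of the seven $X$-vertices, and then exhausts the finitely many possibilities for $N_C(x_7)$ using Lemmas~\ref{lem-from-KLLZ}, \ref{lemma:no-degree-2}, \ref{lem-longcycle}, and \ref{lem-bridge-pro}. There is no general rerouting lemma of the kind your plan requires.

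Your proposal also contains a concrete error in the base case. The assertion ``in any $2$-connected graph every three vertices lie on a common cycle'' is false: in $K_{2,3}$ the three vertices of the larger side cannot all lie on a common cycle, since every cycle has length~$4$. Dirac's theorem guarantees $k$ vertices on a cycle only under $k$-connectivity, so $2$-connectivity alone yields nothing for triples. The base case $|X|=3$ \emph{is} true (it is part of Theorem~\ref{thm:KLLZ21-a}), but its proof needs both ingredients of snp --- the bound $|N^2(X)|\ge 3$ is what rules out $K_{2,3}$-like configurations --- not just $2$-connectivity.

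As for the inductive step, the gap you flag is genuine and is precisely where the problem is stuck: when the fan from $x_0$ lands at two $Y$-vertices that are separated on $C$ by a single $X$-vertex, no reroute gains, and neither of your proposed fixes (enlarging $S$, or passing to the auxiliary square graph and seeking an SDR) is known to work in general. The paper's machinery for $|X|=7$ sidesteps this by brute force on the small number of attachment patterns, which does not scale.
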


In the same paper~\cite{superneighbor}, they prove the following result.
\begin{theorem}[Kostochka et al.~\cite{superneighbor}]\label{thm:KLLZ21-a}
If a bigraph $G=(X,Y)$ is snp and $|X| \in \{3, 4, 5, 6\}$, then $G$ has a cycle containing all vertices of $X$.  
\end{theorem}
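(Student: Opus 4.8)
The plan is to recast the conclusion combinatorially and then reduce it to a rigid structural problem on $X$. A cycle of $G$ through every vertex of $X$ alternates between $X$ and $Y$ and uses exactly $k:=|X|$ vertices of $Y$, so producing one is the same as choosing a cyclic ordering $x_1x_2\cdots x_k$ of $X$ together with \emph{distinct} vertices $y_1,\dots,y_k\in Y$ with $y_i$ adjacent to both $x_i$ and $x_{i+1}$ (indices modulo $k$). First I would delete every $y\in Y$ with $|N(y)\cap X|\le1$; this changes neither the snp hypothesis (the sets $N^2(S)$ and $S\cup N^2(S)$ are untouched) nor the conclusion, so from now on $Y=N^2(X)$, and $G[X\cup N^2(X)]=G$. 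Let $D$ be the shadow graph on $X$, in which $x$ and $x'$ are adjacent when they have a common neighbour in $Y$, and for an edge $e=xx'$ of $D$ put $Y_e=\{y\in Y:\{x,x'\}\subseteq N(y)\}\neq\emptyset$. Then a cyclic ordering of $X$ is realizable precisely when it is a Hamilton cycle $C$ of $D$ for which the bipartite ``hosting'' graph with parts $E(C)$ and $Y$ and edges $\{ey:y\in Y_e\}$ has a matching saturating $E(C)$; by Hall's theorem it therefore suffices to exhibit a Hamilton cycle $C$ of $D$ with $\bigl|\bigcup_{e\in F}Y_e\bigr|\ge|F|$ for every $F\subseteq E(C)$.

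The heart of the argument is a structural consequence of the $2$-connectivity clauses of ``snp''. I claim $D[S]$ is $2$-connected for every $S\subseteq X$ with $|S|\ge3$: if a vertex $v\in S$ separated two vertices $a,b$ of $S$ inside $D[S]$, then no $y\in N^2(S)$ could be adjacent both to a vertex on $a$'s side and to a vertex on $b$'s side, and deleting $v$ would then disconnect $G[S\cup N^2(S)]$. Applying this to the $3$-element subsets, where a $2$-connected graph must be a triangle, forces $D=K_k$; in particular each $Y_e$ is nonempty, so Hall can fail only on sets $F$ with $|F|\ge2$. Moreover, $G$ being $2$-connected it has minimum degree at least $2$, so every vertex of $X$ has at least two neighbours in $Y$; and for every $W\subseteq X$ one has $\bigcup_{e\subseteq W}Y_e=N^2(W)$, which by hypothesis has at least $|W|$ members when $|W|\ge3$. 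The case analysis runs on these three facts.

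For $k=3$ the single candidate cycle $C$ works: Hall reduces to $|Y_e\cup Y_{e'}|\ge2$ for each pair of edges $e,e'$ of $C$, and since any two edges of the triangle share a vertex $z$, a violation would give $z$ a unique $Y$-neighbour, contradicting the previous paragraph; the case $F=E(C)$ is handled by $\bigl|\bigcup_{e\in E(C)}Y_e\bigr|=|N^2(X)|\ge3$. For $k\in\{4,5,6\}$ I would split on $\Delta:=\max_{y\in Y}|N(y)\cap X|$ and on whether $|N^2(X)|$ exceeds $k$. When some $y^\ast$ has many neighbours in $X$, it can host any single cycle-edge, so the task reduces to finding a suitable near-spanning path in the rest of the graph and closing it through $y^\ast$ — far less rigid than the cycle problem, and I expect this to be manageable using $|N^2(X)|\ge|X|$. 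The residual and principal case is $\Delta\le3$: the edges of $K_k$ are then covered by cliques of size at most $3$, so there are only boundedly many possible configurations of the family $\{Y_e\}$, and for each I would either display a Hamilton cycle of $D$ meeting the Hall bound or locate a set $S$ with $|S|\in\{3,4\}$ whose $G[S\cup N^2(S)]$ has a cutvertex, contradicting ``snp'' just as in the $k=3$ warm-up.

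The main obstacle is the principal case for $k=6$: there the number of clique-cover configurations of $K_6$ and of candidate Hamilton cycles is largest, and the Hall condition for a chosen cycle may fail because of a subset $F$ whose vertex set $W=V(F)$ has size $3$, $4$, or $5$, for a ``global'' reason not locally apparent. To keep this finite I would process the configurations by decreasing $\Delta$, so that the flexible high-degree $Y$-vertices are dispatched before the low-degree ones constrain $D$, and I would invoke the ``snp'' $2$-connectivity hypothesis for \emph{all} $3$-, $4$-, and $5$-element subsets of $X$ — not just for $S=X$ — to prune configurations, each time comparing $\bigl|\bigcup_{e\in F}Y_e\bigr|$ with $|W|$ through $\bigcup_{e\subseteq W}Y_e=N^2(W)$. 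Finally I would note that the tempting shortcut by induction — deleting some $x\in X$, which does leave an ``snp'' bigraph on $X\setminus x$ — does not obviously close, since reinserting $x$ into a cycle through $X\setminus x$ needs a fan from $x$ hitting that cycle at two consecutive points, which $2$-connectivity alone does not guarantee; so each value of $k$ is, in the end, treated on its own.
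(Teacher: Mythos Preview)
The paper does not contain its own proof of this statement: Theorem~\ref{thm:KLLZ21-a} is quoted as a prior result of Kostochka, Lavrov, Luo, and Zirlin, and the present paper only uses it as a black box (together with the structural Lemmas~16 and~20--24 from the same source, collected here as Lemma~\ref{lem-from-KLLZ}) in the proof of the extension to $|X|=7$. So there is no in-paper proof to compare against.

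That said, your approach is genuinely different from the machinery this paper deploys for $|X|=7$, and almost certainly from the original proof as well. The paper works with a minimal counterexample (a saturated critical, snp-minimal bigraph), takes a longest cycle $C$ with $V(C)\cap X=X\setminus\{x\}$, and then does a detailed case analysis on $N_C(x)$ using bridge/longest-cycle lemmas (Lemmas~\ref{lem-longcycle} and~\ref{lem-bridge-pro}). Your route --- pass to the shadow graph $D$ on $X$, deduce $D=K_k$ from the $2$-connectivity of $G[S\cup N^2(S)]$ on triples, then search for a Hamilton cycle of $K_k$ whose edge set admits a system of distinct representatives from the $Y_e$'s --- is cleaner in spirit and your $k=3$ argument is correct and complete.

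The gap is that for $k\in\{4,5,6\}$ you have written a plan, not a proof. Phrases like ``I expect this to be manageable'' and ``for each I would either display a Hamilton cycle \dots\ or locate a set $S$ \dots'' are promissory. In particular: (i) your ``high $\Delta$'' reduction is vague --- having one $y^\ast$ of large degree lets it host one cycle edge, but you still need an SDR for the remaining $k-1$ edges, which is the same Hall problem one size down, not obviously a mere path problem; and (ii) for $\Delta\le 3$ and $k=6$ you acknowledge the configuration count is largest and the Hall failure can be ``global'', yet you do not carry out a single such case. The structural lemmas you have (each $Y_e\neq\emptyset$, $\deg_Y(x)\ge 2$, $\bigl|\bigcup_{e\subseteq W}Y_e\bigr|\ge |W|$ for $|W|\ge3$) do not by themselves pin down a good Hamilton cycle; you will need either to actually run the finite enumeration or to find a further invariant that selects one. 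Until that is done, this remains a credible outline rather than a proof.
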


It is also natural to include sets $S$ of size $2$. Accordingly, we say that a bigraph $G=(X, Y)$ with $|X| \ge 2$ satisfies the {\em double Hall property} (or ``$G$ is dHp'' for short) if $|\mathsf{\Lambda}^2(S)| \geq |S|$ for every subset $S \subseteq X$ with $|S| \geq 2$.

It is not difficult to see that every $dHp$ bigraph with $|X| \ge 3$ is also $snp$. The following conjecture made later by Salia is a weakening of Conjecture \ref{con-KLLZ21}.   
\begin{conjecture}[Salia~\cite{salia2021extremal}] \label{con-Salia}
Every dHp bigraph $G = (X,Y)$ with $|X| \ge 2$ has a cycle containing all vertices of $X$. 
\end{conjecture}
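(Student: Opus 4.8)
The plan is to approach Conjecture~\ref{con-Salia} by sorting the vertices of $Y$ according to their degree and handling sparse and dense behaviour by different techniques; since the statement is a conjecture I expect this to yield only partial results of the kind promised in the abstract. A first, routine reduction is that any $y \in Y$ with $\deg(y) \le 1$ lies on no cycle through two vertices of $X$ and belongs to $N^2(S)$ for no $S$ with $|S| \ge 2$, so deleting all such $y$ preserves the dHp and leaves the conclusion unchanged; hence I may assume $\deg(y) \ge 2$ for every $y \in Y$. A vertex $y$ with $\deg(y) = 2$ and neighbours $x_1, x_2$ can be used on a cycle only by entering at $x_1$ and leaving at $x_2$, so it behaves exactly like an edge $x_1 x_2$; contracting all such vertices into edges produces a (multi)graph $H$ on $X$ in which a Hamilton cycle corresponds to a cycle of $G$ covering $X$. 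Since a degree-$2$ vertex $y \in N^2(S)$ becomes an edge of $H[S]$, the dHp forces $|E(H[S])| \ge |S|$ whenever $N^2(S)$ consists only of degree-$2$ vertices; in the extreme case where \emph{every} vertex of $Y$ has degree $2$ this says every pair in $X$ is joined by a double edge of $H$, so $H \supseteq 2K_{|X|}$ and the conclusion is immediate. The substantive sparse case is the mixed one, where a bounded number of vertices of $Y$ have very high degree and the rest have degree $2$; here I would invoke the theorem of Bar\'{a}t et al.\ (a spanning-cycle statement under a connectivity hypothesis) and extend it so that a few high-degree $Y$-vertices can be accommodated alongside the subdivided edges coming from the degree-$2$ vertices, thereby settling the ``degree $2$ or very high'' case.

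The core of the plan is a reduction asserting that Conjecture~\ref{con-Salia}, modulo the sparse case just discussed, follows from the much weaker claim that every dHp bigraph in which \emph{all} vertices of $Y$ have degree at least some absolute constant $t$ has a cycle through $X$. The mechanism is that a vertex $y$ of intermediate degree $3 \le \deg(y) < t$ ought to be redundant: one should be able to delete a carefully chosen edge incident to $y$ (pushing its degree toward $2$) while keeping $|N^2(S)| \ge |S|$ for every $S \subseteq X$, because once the genuine degree-$2$ vertices have been separated off into $H$ the dHp has enough slack elsewhere to absorb the loss. Iterating this surgery leaves one either in the sparse case above or with a dHp bigraph all of whose $Y$-vertices have degree $\ge t$; in the latter situation the many high-degree vertices of $Y$ provide strong connectivity and an abundance of chords, and a rotation--extension or absorption argument should then produce a cycle through all of $X$.

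The step I expect to be the main obstacle is exactly this reduction: for an intermediate-degree $y$ one must choose an edge whose deletion is safe \emph{simultaneously} for all subsets $S \subseteq X$, and small or tightly packed configurations are likely to resist such surgery --- which is why one should anticipate only an ``almost'' equivalence rather than a clean one. Determining the constant $t$ explicitly, and making the sparse treatment and the degree-$\ge t$ treatment meet cleanly across the intermediate range, is where the remaining work, and any residual gap, will lie.
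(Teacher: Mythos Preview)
The statement is a conjecture, and the paper does not prove it; it establishes partial results (Theorems~\ref{thm:cycle-for-7}, \ref{thm:Big-degree-Y-for-7}, \ref{thm:set-of-degree-Y}) and the equivalence of Conjectures~\ref{conj:Big-degree-Y} and~\ref{subconj:Big-degree-Y}. Your proposal is a research plan rather than a proof, and its strategy diverges from the paper's in a way that introduces real gaps.

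First, your ``much weaker claim'' is not weaker. You propose reducing to the assertion that a dHp bigraph with $\deg(y)\ge t$ for every $y\in Y$ and some \emph{absolute constant} $t$ has a cycle through $X$. Since one may already assume $\deg(y)\ge 2$ without loss, and your surgery only pushes intermediate degrees down to $2$, the residual dense problem with $t$ constant is essentially the full conjecture again. By contrast, the paper's high-degree hypothesis in Conjecture~\ref{conj:Big-degree-Y} requires $\deg(y)\ge n-k$ with $k$ fixed and $n$ large, and even that is left open in general; your expectation that ``rotation--extension or absorption'' handles the constant-$t$ case is therefore unsupported.

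Second, the edge-deletion surgery is the genuine obstacle, and you offer no mechanism for it. Deleting an edge $xy$ removes $y$ from $N^2(S)$ for every $S$ containing $x$ and exactly one other neighbour of $y$; you would need a single choice of edge that is safe for all such $S$ simultaneously, and there is no reason the dHp provides the required slack.

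The paper's reduction goes in a different direction entirely. Rather than editing $Y$, it partitions $X$ into the low-degree set $X_s$ and the high-degree set $X_l$; under the hypothesis $\deg(y)\ge n-k$ one gets $|X_s|\le k$ (Claim~\ref{claim:max-low-degree-count}), and then Lemma~\ref{lemma:G + xy} allows one to treat $X_l$ as if it were complete to $Y$. The problem thus collapses to finding a collection of disjoint $Y$--$Y$ paths covering $X_s$ (Fact~\ref{fact:re-paths-for-Xs}), which is exactly Conjecture~\ref{subconj:Big-degree-Y} applied to $G[X_s\cup N(X_s)]$. This is what yields the equivalence of Conjectures~\ref{conj:Big-degree-Y} and~\ref{subconj:Big-degree-Y} and, combined with Theorem~\ref{thm:cycle-for-7}, the $k\le 7$ case. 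Your sparse/dense dichotomy on $Y$-degrees does not reach this structure.
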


Recently, Bar\'at et al.~\cite{Hungarian} proved the following.

\begin{theorem}[Theorem~3.2 in~\cite{Hungarian}]\label{thm:hungarian-cycle-cover}
Any dHp bigraph $G = (X, Y)$ has a collection of disjoint cycles whose union covers all vertices of $X$. 
\end{theorem}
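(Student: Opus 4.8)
The plan is to recast the conclusion as the existence of a degree-constrained spanning subgraph, and then to produce that subgraph using the double Hall property. A family of vertex-disjoint cycles covering $X$ is exactly a spanning subgraph $H\subseteq G$ with $\deg_H(x)=2$ for every $x\in X$ and $\deg_H(y)\in\{0,2\}$ for every $y\in Y$: in a bipartite graph every component of such an $H$ is either a cycle (necessarily of length at least $4$, hence through at least two vertices of $X$) or an isolated vertex of $Y$, and conversely the union of disjoint cycles covering $X$ has exactly these degrees. So it suffices to construct such an $H$.

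First I would build a weaker object: a spanning subgraph $H_0$ with $\deg_{H_0}(x)=2$ for all $x\in X$ and merely $\deg_{H_0}(y)\le 2$ for all $y\in Y$. By the defect form of Hall's theorem for degree-constrained bipartite subgraphs (equivalently, max-flow--min-cut), such an $H_0$ exists if and only if $\sum_{y\in Y}\min\{2,\,|N(y)\cap S|\}\ge 2|S|$ for every $S\subseteq X$. I would check this from dHp: the left-hand side equals $2|N^2(S)|+|N(S)\setminus N^2(S)|$, which is at least $2|N^2(S)|\ge 2|S|$ once $|S|\ge 2$; for $|S|=1$ it equals $\deg_G(x)$, and applying dHp to $\{x,x'\}$ with any $x'\ne x$ gives $\deg_G(x)\ge|N^2(\{x,x'\})|\ge 2$; the case $S=\emptyset$ is vacuous. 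Hence $H_0$ exists; being bipartite with maximum degree $2$, it is a disjoint union of cycles together with paths, and every such path has both endpoints in $Y$ because no vertex of $X$ has degree $1$ in $H_0$.

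The core of the argument is to eliminate the path components. Among all spanning subgraphs with $\deg(x)=2$ on $X$ and $\deg(y)\le2$ on $Y$, choose one, $H$, minimizing the number of degree-$1$ vertices of $Y$ (equivalently, the number of path components); suppose this number is positive and fix a path $P=y_0x_1y_1\cdots x_my_m$. Using dHp I would make an exchange that strictly decreases this count. If $m\ge2$, the vertices $x_1$ and $x_m$ have at least two common neighbors in $G$; taking such a common neighbor $y^*$, one routes it between $x_1$ and $x_m$ while deleting the pendant edges $x_1y_0$ and $x_my_m$ — this closes $P$ into a cycle if $y^*$ is unused, and merges $P$ with a second path if $y^*$ happens to be that path's endpoint, each move removing two degree-$1$ vertices of $Y$. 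The degenerate case $m=1$ is handled by the same kind of move. In every case the count drops, contradicting minimality, so $H$ has no path components and is the desired cover.

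The step I expect to be the genuine obstacle is this last one, specifically when the chosen common neighbor $y^*$ of $x_1$ and $x_m$ is an internal (degree-$2$) vertex of some path or cycle of $H$, so that commandeering it would leave one of its current partners with degree $1$. I anticipate needing either a cleverer choice of which two vertices of $X$ to reconnect (not necessarily the endpoints of $P$), or an honest alternating-path argument along the edges of $H$ whose termination is guaranteed because dHp excludes the local ``dead-end'' configurations (for instance, dHp forbids a degree-$1$ vertex of $Y$ whose unique $G$-neighbor has degree $2$, and forces every pair of vertices of $X$ to have at least two common neighbors). An alternative would be to bypass the two-stage approach and set the problem up directly as a parity-$(g,f)$-factor, invoking Lov\'asz's factor theorem and verifying its (heavier) Tutte-type condition against dHp; I would try both and keep the shorter.
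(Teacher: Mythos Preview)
This theorem is quoted from Bar\'at et al.\ and is not proved in the present paper, so there is no in-paper argument to compare yours against.

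Your first stage is correct: the Ore--Hall criterion $\sum_{y\in Y}\min\{2,|N(y)\cap S|\}\ge 2|S|$ is exactly what governs the existence of an $H_0$ with $\deg_{H_0}(x)=2$ on $X$ and $\deg_{H_0}(y)\le 2$ on $Y$, and your verification from the double Hall property goes through for every $S\subseteq X$.

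The second stage has precisely the gap you flag, and it is the heart of the matter rather than a technicality. Take $X=\{x_1,x_2,x_3\}$, $Y=\{y_0,\dots,y_4\}$ with $N(x_1)=\{y_0,y_1,y_2,y_4\}$, $N(x_2)=\{y_1,y_2,y_4\}$, $N(x_3)=\{y_1,y_2,y_3\}$; one checks directly that this bigraph is dHp. Let $H$ be the single path $y_0x_1y_1x_2y_2x_3y_3$. The only common neighbours of the end-vertices $x_1,x_3$ are $y_1$ and $y_2$, both already of degree~$2$ in $H$, so your primary move is blocked. Moreover, every one-step exchange at any pair $x_i,x_j$---including each of the four ways to bring in the unused vertex $y_4\in N(x_1)\cap N(x_2)$---leaves at least two degree-$1$ vertices in $Y$ (it either relabels the path, splits off a $4$-cycle and a shorter path, or makes things worse). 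Your chosen potential therefore does not strictly decrease under any local move from this $H$, so the contradiction step is not justified as written. You correctly anticipate that a genuine alternating-trail argument or the Lov\'asz parity-factor route is required, but neither is carried out here, and neither reduces to the kind of one-line estimate that handled stage one. As it stands, the proposal is incomplete at the decisive step.
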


In this paper, we present progress on these conjectures.  We show that Conjecture~\ref{con-KLLZ21} holds for $|X|=7$.  
 
\begin{theorem}\label{thm:cycle-for-7}
Let $G=(X,Y)$ be a bigraph with $3 \le |X| \le 7$. If $G$ is snp, then $G$ has a cycle covering all vertices of $X$.
\end{theorem}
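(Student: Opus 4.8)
The plan is to extend Theorem~\ref{thm:KLLZ21-a} by exactly one more step, handling $|X| = 7$, while reusing the structure of the argument in~\cite{superneighbor} as much as possible. Since the cases $|X| \in \{3,4,5,6\}$ are already covered, we may assume $|X| = 7$; the goal is to produce a cycle $C$ with $X \subseteq V(C)$. First I would collect the structural consequences of being snp that do not depend on $|X|$: for every $S \subseteq X$ with $|S| \ge 3$ we have $|N^2(S)| \ge |S|$ and $G[S \cup N^2(S)]$ is $2$-connected; in particular taking $S = X$ gives a $2$-connected bigraph on $X \cup N^2(X)$ with $|N^2(X)| \ge 7$. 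I would also note the reduction that lets us delete from $Y$ any vertex of degree $\le 1$ and any vertex $y$ whose neighborhood $N(y)$ is contained in $N(y')$ for another $y' \in Y$ (such $y$ is useless for building a cycle through $X$), so WLOG the bigraph is ``reduced'': every $y \in Y$ has degree $\ge 2$ and the sets $N(y)$ form an antichain. Combined with the snp bound this already constrains $|Y|$ and the degree sequence on the $X$-side.

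The core of the proof is a longest-cycle / rotation-extension argument in the style of the $|X| \le 6$ proof. Take a cycle $C$ maximizing $|V(C) \cap X|$; suppose for contradiction $|V(C) \cap X| = k \le 6$ and pick $x_0 \in X \setminus V(C)$. Using $2$-connectivity of $G[X \cup N^2(X)]$ there are two internally disjoint paths from $x_0$ to $C$, giving a detour that would enlarge $|V(C)\cap X|$ unless the attachment points and the segments between them are obstructed in very specific ways. Here the key quantitative input is that for the set $S = (V(C)\cap X) \cup \{x_0\}$ of size $k+1 \le 7$, the super-neighborhood $N^2(S)$ has size $\ge k+1$, so there are ``many'' $Y$-vertices each seeing at least two vertices of $S$; these are exactly the vertices one uses to splice $x_0$ into $C$ or to reroute a chord of $C$ through $x_0$. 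The argument splits into cases according to $k$ (i.e.\ $k = 3,4,5,6$) and, within each, according to how $x_0$'s potential insertion points are distributed around $C$; each case should end by exhibiting either a longer cycle (contradicting maximality) or a subset $S \subseteq X$ violating $|N^2(S)| \ge |S|$ or the $2$-connectedness of $G[S\cup N^2(S)]$.

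I would organize the bookkeeping around the bipartite ``segment structure'' of $C$: between consecutive $X$-vertices of $C$ lies exactly one $Y$-vertex, so a cycle through $k$ vertices of $X$ has exactly $k$ such connector vertices, and inserting $x_0$ amounts to finding a $Y$-vertex $y \in N^2(\{x_0\} \cup (V(C)\cap X))$ adjacent to $x_0$ and to one endpoint of a segment, together with a way to re-cover the displaced segment vertex. The repeated tool is: if $x_0$ has two neighbors $y_1, y_2$ in $Y$ and each $y_i$ has another neighbor on $C$, look at where those land; if they are adjacent connector-neighbors around $C$ we win immediately, and otherwise we gain a chord that, together with snp applied to a smaller subset, forces a contradiction. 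One should also use Theorem~\ref{thm:hungarian-cycle-cover}-type reasoning sparingly as a sanity check — a disjoint cycle cover of $X$ exists, and our assumed maximum cycle misses only a bounded number of $X$-vertices, so the ``leftover'' $X$-vertices form a very small configuration that can be analyzed by hand.

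The main obstacle I expect is the sheer case analysis when $k = 6$ and $x_0$ is the unique missed vertex: here $|X| = 7$, the relevant subsets $S$ have size $6$ or $7$, and the snp bound $|N^2(S)| \ge |S|$ is tight enough that it does not by itself force an insertion — one must combine it with $2$-connectivity of $G[S \cup N^2(S)]$ and with a careful analysis of which pairs of $X$-vertices can share a common $Y$-neighbor, ruling out each way the ``obstruction'' configuration could consistently occur. This is precisely the step where going from $|X| = 6$ to $|X| = 7$ adds genuinely new subcases rather than a mechanical repetition, and where I would expect the bulk of the paper's work (and possibly a computer-assisted enumeration of the finitely many reduced obstruction bigraphs on $7 + 7$ vertices) to reside. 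The cases $k \le 5$ should follow from, or be mild elaborations of, the arguments already in~\cite{superneighbor}, so I would state them quickly and concentrate the detailed write-up on $k = 6$.
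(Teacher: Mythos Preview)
Your high-level strategy --- take a longest cycle, try to absorb the missing $X$-vertex, and use the snp inequalities to rule out obstructions --- matches the paper's, but two essential ingredients are missing from your plan, and without them the case analysis you anticipate would not close.

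First, the paper does not split on $k = |V(C)\cap X|$. Instead it passes to a \emph{saturated critical, snp-minimal} counterexample (this is stronger than your ``reduced'' antichain-of-neighborhoods condition), and by the definition of critical, such a $G$ already contains a cycle through any six vertices of $X$; so $k=6$ is automatic and only that case is analyzed. The saturated/minimal hypotheses then supply the structural facts you need but do not mention: no $y\in Y$ has degree $|X|-1$ or $|X|-2$, no two degree-$2$ vertices of $Y$ share a neighborhood, and the missing vertex has at least two non-neighbors among the $Y$-vertices of $C$ (these are Lemma~\ref{lem-from-KLLZ}(ii)--(v)). Your proposal relies only on $2$-connectivity and the bare bound $|N^2(S)|\ge |S|$, which is not enough to kill the configurations that arise.

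Second, and this is the genuinely new idea for $|X|=7$, the paper does not let $x_0$ be an arbitrary missed vertex. It proves (Lemma~\ref{lemma:no-degree-2}) that in a saturated critical snp-minimal counterexample of minimum $|X|$, every $x\in X$ has degree $\ge 3$ and some $x$ has degree $\ge 4$; it then \emph{chooses} the excluded vertex $x_7$ to have maximum degree. Combined with Lemma~\ref{lem-from-KLLZ}(i),(v) this pins $|N_C(x_7)|\in\{2,3,4\}$, and the case split is on that number (with further subcases by the pattern of $N_C(x_7)$ around $C$), not on $k$. The maximality of $\deg(x_7)$ is then used repeatedly: several subcases end by showing some other $x_i$ would be forced to have degree exceeding $\deg(x_7)$. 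Without this trick you have no leverage to bound the degree of $x_0$'s competitors and the ``$k=6$, one missing vertex'' analysis you correctly identify as the hard part does not terminate. No computer search is used; the subcases are dispatched by hand using the longest-cycle/bridge Lemmas~\ref{lem-longcycle} and~\ref{lem-bridge-pro} together with the max-degree contradiction.
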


Also, we introduce two conjectures that are slightly weaker than Conjecture~\ref{con-Salia}, and we show that these two conjectures are equivalent to each other. 

\begin{conjecture}\label{conj:Big-degree-Y}
Let $k$ be a nonnegative integer, and consider a dHp bigraph $G=(X, Y)$. If $|X| = n > \max \{2k+1, k(k+1)\}$ and $\deg(y) \ge n - k$ for all $y \in Y$, then there is a cycle in $G$ covering all vertices of $X$.
\end{conjecture}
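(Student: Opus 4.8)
The plan is to recast the problem as finding a \emph{rainbow Hamilton cycle}. A cycle of $G$ covering $X$ is exactly a cyclic ordering $x_1,x_2,\dots,x_n$ of $X$ together with \emph{pairwise distinct} vertices $y_1,\dots,y_n\in Y$ such that $y_i$ is adjacent to both $x_i$ and $x_{i+1}$ (indices mod $n$). Fix a cyclic order and let $H$ be the bipartite graph whose parts are the $n$ consecutive pairs $e_i=\{x_i,x_{i+1}\}$ and the set $Y$, with $e_i\sim y$ exactly when $y\in N^2(e_i)$; we want a matching of $H$ saturating all the pairs. By Hall's theorem this amounts to showing $\bigl|\bigcup_{e\in F}N^2(e)\bigr|\ge |F|$ for every family $F$ of consecutive pairs. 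Writing $M(y)=X\setminus N(y)$, so $|M(y)|\le k$, a vertex $y$ fails to lie in $\bigcup_{e\in F}N^2(e)$ precisely when $M(y)$ is a vertex cover of the subgraph of $C_n$ formed by $F$; as each component of that subgraph is a path (or all of $C_n$), this forces $|M(y)|\ge |F|/2$. Hence if $|F|\ge 2k+1$ then $\bigcup_{e\in F}N^2(e)=Y$, and $|Y|\ge |N^2(X)|\ge |X|=n\ge|F|$ by dHp applied to $X$ (the case $F=E(C_n)$, where $|F|=n>2k+1$, is included). In particular $k=0$ and $k=1$ are settled for \emph{every} cyclic order.

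The substantive case is $2\le|F|\le 2k$, where the cyclic order must be chosen with care, since for a careless order one can produce short unions of paths $F$ with $\bigl|\bigcup_{e\in F}N^2(e)\bigr|<|F|$. Here I would exploit dHp on $W=V(F)$: it gives $|N^2(W)|\ge|W|=|F|+c$, where $c$ is the number of components of $F$, while the ``missing'' vertices (those in $N^2(W)\setminus\bigcup_{e\in F}N^2(e)$, adjacent to at least two vertices of $W$ but to no edge of $F$) each have $M(y)\cap W$ a vertex cover of $F$, hence containing at least one vertex per component; so a troublesome $F$ has at most $k$ components, at most $2k$ edges, and all its bad $Y$-vertices lie in the small family $\{y:M(y)\text{ covers }F\}$. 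The plan is then to build the cyclic order --- by a greedy/inductive insertion of the vertices of $X$, or by the probabilistic method --- so that the few vertices of $X$ that lie in several sets $M(y)$ are placed far apart along the cycle, preventing any short $F$ from accumulating $|F|$ or more bad $Y$-vertices. The hypothesis $n>k(k+1)$ is the natural resource for this: at most $k$ such ``bad'' $Y$-vertices contribute $M$-sets covering at most $k^2$ vertices of $X$, leaving room to separate them.

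The main obstacle is exactly this last step: guaranteeing that a \emph{single} cyclic order defeats all small families $F$ at once. A parallel route, which runs into the same difficulty, is to start from a disjoint cycle cover of $X$ given by Theorem~\ref{thm:hungarian-cycle-cover}, take one with the fewest cycles, and --- assuming for contradiction it has cycles $C_1,C_2$ --- merge them: deleting an edge $\{a,b\}$ of $C_1$ with $a\in X$, $b\in Y$ and an edge $\{a',b'\}$ of $C_2$ and reconnecting through $ab'$ and $a'b$ gives a single cycle on $V(C_1)\cup V(C_2)$ whenever $a\notin M(b')$ and $a'\notin M(b)$. A double count shows at most $4k\bigl(|X\cap C_1|+|X\cap C_2|\bigr)$ of the $4\,|X\cap C_1|\cdot|X\cap C_2|$ candidate edge-pairs are bad, so a merge exists as soon as both cycles are large; short cycles are the obstruction, and there one would instead reinsert their vertices one at a time into a long cycle using the unused $Y$-vertices, with $n>k(k+1)$ again the quantity that should make the reinsertion budget balance (one first checks, using dHp on $\{a_1,a_2,x\}$ for a third vertex $x$, that no two vertices of a short cycle can share the same $2$-element neighborhood, which rules out the simplest obstruction). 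Reconciling the short-component analysis with the global counting is where I expect the real work to lie.
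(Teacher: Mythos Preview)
The statement is a \emph{conjecture}; the paper does not prove it in full. What the paper does establish is (i) its equivalence with Conjecture~\ref{subconj:Big-degree-Y} (Lemma~\ref{lemma:equivalent-conjecture}) and (ii) its validity for $k\le 7$ (Theorem~\ref{thm:Big-degree-Y-for-7}). Your proposal is likewise incomplete, and you flag the gap honestly; the point worth making is that your line of attack and the paper's are entirely different.

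The paper's key move is to look at degrees in $X$, not $Y$. Set $X_s=\{x\in X:\deg(x)\le k\}$ and $X_l=X\setminus X_s$. A closure-type lemma (Lemma~\ref{lemma:G + xy}: if $\deg(x)+\deg(y)\ge n+1$ then $G+xy$ has an $X$-covering cycle iff $G$ does) lets one pretend $X_l$ is complete to $Y$, and a double count (Claim~\ref{claim:max-low-degree-count}) forces $|X_s|\le k$. The task then reduces to finding disjoint $Y$--$Y$ paths whose union covers exactly the small set $X_s$ (Fact~\ref{fact:re-paths-for-Xs}); this is Conjecture~\ref{subconj:Big-degree-Y} applied to the induced bigraph on $X_s\cup N(X_s)$, which has at most $k$ vertices on the $X$-side. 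For $k\le 7$ that small instance is finished off by Theorem~\ref{thm:cycle-for-7}.

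Your approach instead fixes a cyclic order on all of $X$ and tries to verify Hall's condition in the auxiliary pairs-versus-$Y$ graph. The part that goes through is correct and clean: $|F|\ge 2k+1$ forces $\bigcup_{e\in F}N^2(e)=Y$ via the vertex-cover observation, and $|F|\le 2$ follows from dHp on a single pair; so $k\le 1$ really is settled for any cyclic order. But the remaining window $3\le|F|\le 2k$ is where all the difficulty lives, and the heuristic you sketch --- space out the ``bad'' $X$-vertices along the cycle so that no short $F$ accumulates too many bad $y$'s --- would have to defeat every small $F$ with a \emph{single} order. The paper's reduction sidesteps this global obstacle by localising to a subgraph with $|X|\le k$; what it buys is a proof for $k\le 7$ and an equivalence with a structurally simpler conjecture, at the cost of leaving the general case open. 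Your route is more direct but, as you say, stalls precisely at the point where a good order must be produced.
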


\begin{conjecture}\label{subconj:Big-degree-Y}
If a dHp bigraph $G = (X, Y)$ satisfies $|\mathsf{\Lambda}_G(X)| \geq |X| + 1$, then there exists a collection of disjoint $Y-Y$ paths whose union covers all of $X$.
\end{conjecture}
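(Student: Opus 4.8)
The plan is to start from Theorem~\ref{thm:hungarian-cycle-cover} and then trade cycles for $Y$-$Y$ paths. Apply that theorem to get a family $\mathcal{C}=\{C_1,\dots,C_m\}$ of pairwise vertex-disjoint cycles whose union covers $X$, and fix one with $m$ as small as possible. Set $X_j:=V(C_j)\cap X$; each $C_j$ contains exactly $|X_j|$ vertices of $Y$, and $\sum_j|X_j|=|X|$, so the family uses exactly $|X|$ vertices of $Y$, and since $|N(X)|\ge|X|+1$ there is a vertex $y^\ast\in N(X)$ lying on none of the $C_j$. The case $m=1$ is then immediate: with a single cycle $C$ through all of $X$, choose $x\in X$ with $xy^\ast\in E(G)$ (possible since $y^\ast\in N(X)$) and let $u$ be one of the two neighbours of $x$ on $C$; then $(C-xu)+xy^\ast$ is a single path from $y^\ast$ to $u$ whose vertex set is $V(C)\cup\{y^\ast\}$, with both endpoints in $Y$ and every vertex of $X$ internal, hence the desired path cover.

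The substance of the argument lies in the case $m\ge 2$, and this is where I expect essentially all of the difficulty to be concentrated. A single off-cycle vertex $y^\ast$ buys exactly one repair: if $y^\ast$ has neighbours $x\in X_i$ and $x'\in X_j$ in two distinct cycles, then deleting an edge of $C_i$ at $x$ and an edge of $C_j$ at $x'$ and inserting the path $x\,y^\ast\,x'$ splices $C_i$ and $C_j$ into a single $Y$-$Y$ path covering $X_i\cup X_j$, while if $y^\ast$ meets only $C_i$ the surgery of the previous paragraph converts $C_i$ alone; either way we are left with one $Y$-$Y$ path together with $m-1$ or $m-2$ untouched cycles, which is not yet a path cover. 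So the real target is a \emph{cycle-merging lemma}: whenever $m\ge 2$, two members of $\mathcal{C}$ can be merged into a single cycle (lowering $m$) or re-routed into $Y$-$Y$ paths using shared or external $Y$-vertices. The natural leverage is that every vertex of $Y(C_i)\cup Y(C_j)$ already lies in $N^2(X_i\cup X_j)$, so the super-neighbourhood inequality $|N^2(X_i\cup X_j)|\ge|X_i|+|X_j|$, or else the presence of the off-cycle vertex $y^\ast$, must force an additional $X$-$X$ connection between two cycles; one then has to promote that connection to an actual merge by a rotation/extension argument on the pair of cycles. In the tight regime $|N(X)|=|X|+1$ such merging has to push $m$ all the way down to $1$, so it cannot be mere bookkeeping: it is exactly the restriction of Salia's Conjecture~\ref{con-Salia} to bigraphs with $|N(X)|=|X|+1$, which is why no short self-contained proof is to be expected.

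A complementary route would try to reduce Conjecture~\ref{subconj:Big-degree-Y} toward the degree hypothesis of Conjecture~\ref{conj:Big-degree-Y}. Adjoin to $X$ a new vertex $x_0$ adjacent to every vertex of $Y$, forming $G'=(X\cup\{x_0\},Y)$: a single $Y$-$Y$ path through $X$ in $G$ is exactly a cycle through $X\cup\{x_0\}$ in $G'$, and in $G'$ every $y\in Y$ has gained $x_0$ as a neighbour, moving toward the ``large degrees in $Y$'' setting. The catch is that $G'$ need not be dHp: one checks $N^2_{G'}(\{x_0\}\cup S)=N_G(S)$ for nonempty $S\subseteq X$, so $G'$ is dHp precisely when $|N_G(S)|\ge|S|+1$ for \emph{every} nonempty $S\subseteq X$, a condition strictly stronger than $|N(X)|\ge|X|+1$. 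One would then need a Gallai--Edmonds-type decomposition of $G$ that peels off the deficient sets $S$ with $|N_G(S)|=|S|$, covers each such piece by its own short $Y$-$Y$ paths, and applies the reduction only to the strongly-Hall remainder. Under either approach the decisive obstacle is the same, and is the reason the conjecture remains open: turning the disjoint-cycle structure supplied by Theorem~\ref{thm:hungarian-cycle-cover} into one with few enough components to be stitched together by the few spare vertices available in $N(X)$.
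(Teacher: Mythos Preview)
This statement is Conjecture~\ref{subconj:Big-degree-Y}, which the paper does \emph{not} prove; it is left open, and the paper's contribution regarding it is Lemma~\ref{lemma:equivalent-conjecture}, which shows only its equivalence with Conjecture~\ref{conj:Big-degree-Y}. There is thus no ``paper's own proof'' to compare against, and your write-up is right to stop short of one. Your $m=1$ argument is correct, and your diagnosis that the $m\ge 2$ case is essentially the restriction of Conjecture~\ref{con-Salia} to graphs with $|N(X)|$ barely exceeding $|X|$ is exactly the point.

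One correction to your complementary route: the enlarged graph $G'=G+x_0$ with $x_0$ joined to all of $Y$ \emph{is} dHp. The condition you isolate, $|N_G(S)|\ge|S|+1$ for every nonempty $S\subseteq X$, actually follows from the double Hall property together with the hypothesis $|N_G(X)|\ge|X|+1$: for $|S|=1$ apply dHp to any pair containing the single vertex; for $S=X$ use the hypothesis; and for $2\le|S|<|X|$, if $|N_G(S)|\le|S|$ then necessarily $N_G(S)=N^2_G(S)$, whence for any $x\in X\setminus S$ one has $N^2_G(S\cup\{x\})\subseteq N_G(S)$, contradicting dHp for $S\cup\{x\}$. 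So no Gallai--Edmonds peeling is needed. The genuine reason this route stalls is different: a cycle through $X\cup\{x_0\}$ in $G'$ corresponds to a \emph{single} $Y$--$Y$ path through $X$ in $G$, strictly more than Conjecture~\ref{subconj:Big-degree-Y} asks, and in any case $G'$ is far from the degree hypothesis of Conjecture~\ref{conj:Big-degree-Y}. The paper's reduction (in the direction Conjecture~\ref{conj:Big-degree-Y}~$\Rightarrow$~Conjecture~\ref{subconj:Big-degree-Y}) instead adjoins \emph{many} universal $X$-vertices together with as many new $Y$-vertices, so that the original $X$ becomes the small-degree set $X_s$ of the enlarged graph; the same strengthened Hall inequality above is precisely what makes that enlarged graph dHp as well.
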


Leveraging Theorem~\ref{thm:cycle-for-7}, we can show that Conjecture~\ref{conj:Big-degree-Y} holds for $k \le 7$. This result is formally presented below.
\begin{theorem}\label{thm:Big-degree-Y-for-7}
For any integer $k$ with $0 \le k \le 7$ and any dHp bigraph $G$, if $|X| = n > \max\{2k+1, k(k+1)\}$ and $\deg(y) \ge n - k$ for all $y \in Y$, then  there is a cycle in $G$ covering all vertices of $X$.
\end{theorem}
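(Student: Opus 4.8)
The plan is to bootstrap from Bar\'at et al.'s cycle cover (Theorem~\ref{thm:hungarian-cycle-cover}), use the large degrees on $Y$ to merge cycles, and finish by applying Theorem~\ref{thm:cycle-for-7} to a bounded piece of $G$. The first thing to record is the arithmetic content of the hypothesis: since every $y\in Y$ misses at most $k$ vertices of $X$, any $j\le k+1$ vertices of $Y$ together miss at most $k(k+1)<n$ vertices of $X$, hence have a common neighbor in $X$; in particular every two vertices of $Y$ do. The cases $k\le 2$ I would treat separately (when $k=0$ every $y\in Y$ is adjacent to all of $X$ and dHp gives $|Y|\ge|N^2(X)|\ge n$, so one simply alternates; $k\in\{1,2\}$ are short), so from now on assume $3\le k\le 7$.

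Apply Theorem~\ref{thm:hungarian-cycle-cover} and fix a disjoint cycle cover $C_1,\dots,C_m$ of $X$ with $m$ minimum; put $a_i=|V(C_i)\cap X|\ge 2$, so $\sum_i a_i=n$. If $m=1$ we are done, so suppose $m\ge 2$. The basic \emph{splice} move is: if $C_i$ has an edge $uv$ with $u\in X,\ v\in Y$ and $C_j$ has an edge $pq$ with $p\in X,\ q\in Y$ such that $v\sim p$ and $u\sim q$, then deleting $uv,pq$ and adding $vp,uq$ merges $C_i,C_j$ into one cycle, contradicting minimality. Fixing $uv$, at most $4k$ of the $2a_j$ edges of $C_j$ are blocked (each of $u,v$ misses $\le k$ vertices of the relevant side, and each such vertex lies in two edges of $C_j$), so if some $a_j>2k$ the splice goes through. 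Hence $a_i\le 2k$ for every $i$, which forces $m\ge n/(2k)>(k+1)/2$, and in particular $m\ge 3$ (if $m=2$ then $n\le 4k<k(k+1)$).

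The key lemma is that $m\le k$. If instead $m\ge k+1$, pick a vertex $q_i\in V(C_i)\cap Y$ from each of $k+1$ of the cycles; they have a common neighbor $x^*\in X$, say $x^*\in V(C_\ell)$, and then $x^*$ has a crossing edge $x^*q_i$ to each of at least $k$ of the other chosen cycles. Removing $x^*$ from $C_\ell$ leaves a path between its two $C_\ell$-neighbors $v,v'\in Y$. A counting argument---each cycle $C_i$ for which \emph{neither} $C_i$-neighbor of $q_i$ is adjacent to $v'$ contributes two distinct vertices to $\bar N(v')\cap X$, and $|\bar N(v')\cap X|\le k$---shows that among the $\ge k$ relevant cycles at least $\lceil k/2\rceil\ge 2$ have a $C_i$-neighbor of $q_i$ adjacent to $v'$, and likewise at least $\lceil k/2\rceil\ge 2$ have one adjacent to $v$. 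Choosing two distinct such cycles $C_j,C_{j'}$ then lets one splice $C_\ell,C_j,C_{j'}$ (reinserting $x^*$ between $q_j$ and $q_{j'}$ and using the Hamilton paths $C_j-e$, $C_{j'}-e'$ for suitable edges $e,e'$) into a single cycle, producing a cover with $m-2$ cycles---a contradiction. Thus $3\le m\le k\le 7$.

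To finish, let $X^*\subseteq X$ contain one vertex of each $C_i$, so $3\le|X^*|=m\le 7$, and consider $G^*=G[X^*\cup N^2_G(X^*)]$. This bigraph is snp: for $S\subseteq X^*$ with $|S|\ge 3$ one has $N^2_{G^*}(S)=N^2_G(S)$ (since $N^2_G(S)\subseteq N^2_G(X^*)$), so $|N^2_{G^*}(S)|\ge|S|$ and $G^*[S\cup N^2_{G^*}(S)]=G[S\cup N^2_G(S)]$ is $2$-connected, both inherited from the dHp---hence snp---bigraph $G$. Theorem~\ref{thm:cycle-for-7} then gives a cycle through $X^*$ in $G^*$, and the remaining task is to lift it to a cycle through all of $X$: replace each $C_i$ by a Hamilton path of $C_i$ covering $V(C_i)\cap X$ (delete one $Y$-vertex of $C_i$), and reassemble these $m$ paths into a single cycle in the cyclic order dictated by the $G^*$-cycle, joining consecutive paths through a vertex of $Y$; the hypothesis $\deg(y)\ge n-k$ is what lets one pick these connecting $Y$-vertices adjacent to the required path endpoints and distinct from everything already used. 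I expect this lifting step, together with the endpoint bookkeeping in the triple-splice lemma, to be the main obstacle: a cycle has no Hamilton path between two vertices of $X$, so one must choose, for each $C_i$, which $Y$-vertex to delete so that the exposed endpoints mesh correctly around the ring, and it is exactly here that the dHp, the bound $\deg(y)\ge n-k$, and Theorem~\ref{thm:cycle-for-7} have to be used together rather than via a purely combinatorial contraction.
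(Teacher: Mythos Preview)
Your approach differs substantially from the paper's, which never invokes Theorem~\ref{thm:hungarian-cycle-cover}. The paper partitions $X=X_s\cup X_l$ with $X_s=\{x:\deg(x)\le k\}$, shows $|X_s|\le k\le 7$ by an edge count, and observes that every $x\in X_l$ and $y\in Y$ satisfy $\deg(x)+\deg(y)\ge n+1$, so a closure argument (Lemma~\ref{lemma:G + xy}) makes $X_l$ effectively complete to $Y$. It then suffices to find disjoint $Y$--$Y$ paths covering exactly $X_s$; since $G[X_s\cup N(X_s)]$ is itself dHp with at most $7$ vertices on the $X$-side, Theorem~\ref{thm:cycle-for-7} supplies a cycle through $X_s$, hence the required paths, and threading them through fresh $X_l$-vertices is then trivial. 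The bounded object is the set of low-degree $X$-vertices, not a set of cycles.

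Your argument has two gaps. The splice claim ``each of $u,v$ misses $\le k$ vertices of the relevant side'' is simply false for $u\in X$: the degree hypothesis is on $Y$ only, and $u$ may miss all of $V(C_j)\cap Y$, so $a_i\le 2k$ is unproved. This turns out not to be structurally fatal---your triple-splice lemma giving $m\le k$ does not actually use it---but the error is precisely the $X/Y$ asymmetry that the paper's $X_s/X_l$ decomposition is built to exploit.

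The real gap is the lifting step you yourself flag as ``the main obstacle''. Two concrete problems: the $Y$-vertices on the small $G^*$-cycle through $X^*$ may already lie on some $C_j$, so substituting Hamilton paths of the $C_i$ need not produce a \emph{simple} cycle; and for a short $C_i$ (say $a_i\le k$, which you cannot rule out) there is no reason the two prescribed connectors in $Y$ should hit a consecutive pair of $X$-vertices of $C_i$ in the correct orientation. You offer no mechanism for choosing the representatives $x_i^*$, the connecting $Y$-vertices, and the deleted $Y$-vertices consistently. The paper sidesteps all of this: once $X_l$ is complete to $Y$, there is no endpoint bookkeeping whatsoever.
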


In addition to Theorem~\ref{thm:hungarian-cycle-cover}, in~\cite{Hungarian}, Bar\'at et al.\ also prove that any dHp bigraph $G=(X,Y)$ with $|X| \geq 2$ guarantees a cycle covering $X$ if, for any $y \in Y$, $\deg(y) \in \{2, |X|\}$. We give the following extended theorem.

\begin{theorem}\label{thm:set-of-degree-Y}
Let $G = (X, Y)$ be a dHp bigraph with $|X| \ge 2$. Then $G$ has a cycle covering all vertices in $X$ if, for any $y \in Y$, $\deg(y) \in \{2, |X|-2, |X|-1, |X|\}$.
\end{theorem}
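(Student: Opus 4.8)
The plan is to follow the strategy Bar\'at et al.\ used for the degree set $\{2,|X|\}$, the extra work being to handle hubs that miss one or two vertices of $X$.

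\emph{Reductions and reformulation.} First I would reduce to $n := |X| \ge 8$: a dHp bigraph with $|X| \ge 3$ is snp, and the case $|X| = 2$ is immediate (apply dHp to $S = X$ to get two common neighbours of the two vertices of $X$), so for $2 \le n \le 7$ we are done, invoking Theorem~\ref{thm:cycle-for-7} for $3 \le n \le 7$. Delete every $y \in Y$ of degree $\le 1$; this alters neither $N^2(S)$ for $S \subseteq X$ nor the cycles of $G$, so $G$ stays dHp, and now each $y \in Y$ has degree $2$, $n-2$, $n-1$, or $n$. Call a vertex of degree $\ge n-2$ a \emph{hub}; let $Y_{\mathrm h}$ be the set of hubs, $m = |Y_{\mathrm h}|$, and $\bar y = X \setminus N(y)$ for a hub $y$, so $|\bar y| \le 2$. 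Let $H$ be the multigraph on $X$ with one edge $xx'$ for each degree-$2$ vertex $y$ having $N(y) = \{x,x'\}$. Then a cycle of $G$ covering $X$ is exactly a cyclic enumeration $x_1, \dots, x_n$ of $X$ together with distinct $y_1, \dots, y_n \in Y$ where, for each $i$ mod $n$, either $y_i$ has degree $2$ and $x_i x_{i+1} \in E(H)$, or $y_i$ is a hub with $\{x_i, x_{i+1}\} \cap \bar{y_i} = \varnothing$. Since a hub lies in $N^2(S)$ whenever $|S| \ge 4$, dHp gives the density bound $e_H(S) \ge |S| - m$ for $4 \le |S| \le n$, with weaker local versions for $|S| \in \{2,3\}$; in particular, for each pair $\{u,v\}$ the multiplicity of $uv$ in $H$ plus the number of hubs avoiding both $u$ and $v$ is at least $2$, so $H$ is forced to be dense around any vertex forbidden by many hubs.

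\emph{Cycle cover and splicing.} Assume for contradiction that $G$ has no cycle covering $X$. By Theorem~\ref{thm:hungarian-cycle-cover} there is a vertex-disjoint family of cycles covering $X$; fix $\mathcal C = \{C_1, \dots, C_t\}$ with $t$ minimum and, subject to that, using as few hubs as possible. Since $G$ has no spanning cycle, $t \ge 2$. Each $C_i$ meets $X$ in $c_i \ge 2$ vertices and uses $c_i$ vertices of $Y$, say $h_i$ of them hubs, so $\sum_i c_i = n$, the $c_i - h_i$ degree-$2$ vertices of $C_i$ span $V(C_i) \cap X$ by a linear forest with $h_i$ components (a cycle if $h_i = 0$), and $\mathcal C$ uses $n - \sum_i h_i$ edges of $H$. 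I would then merge two cycles $C_i, C_j$ for a contradiction: pick subpaths $a\,z\,a'$ of $C_i$ and $b\,z'\,b'$ of $C_j$ with $a,a',b,b' \in X$, delete $z,z'$, and insert two distinct $Y$-vertices $w,w'$ joining $a$ to one of $b,b'$ and $a'$ to the other --- where $w,w'$ are chosen among the unused vertices of $Y$, or (when nothing is unused) are $z,z'$ themselves, reused as hubs. The result is a single cycle on $V(C_i)\cup V(C_j)$ using $c_i+c_j$ vertices of $Y$, contradicting the minimality of $t$.

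\emph{Two regimes.} If $\sum_i h_i \le m-2$, there are two unused hubs $w,w'$; since each misses at most two vertices of $X$ and $c_i,c_j \ge 2$, suitable subpaths and a valid pairing of $\{a,a'\}$ with $\{b,b'\}$ exist unless $\mathcal C$ has a cycle on exactly two $X$-vertices equal to $\bar w$ or $\bar{w'}$, which the $|S|=2$ form of dHp and the minimality of $\mathcal C$ rule out. If $\sum_i h_i \ge m-1$, then almost every hub is used and, since $e_H(X) \ge n-m$, almost every $H$-edge is used inside $\mathcal C$; here the $|S|\in\{2,3\}$ density bounds, applied to the vertex sets of single cycles and of pairs of cycles, force the local structure of $H$ and of the hub system around the ``popular'' forbidden vertices, and this is exactly what is needed to rewire two cycles (using reused hubs together with short $H$-paths through the dense neighbourhoods of those vertices) into one. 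In every case we contradict the choice of $\mathcal C$, so $G$ has a cycle covering $X$.

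\emph{Expected obstacle.} The crux --- the one place we must go beyond Bar\'at et al., whose hubs have empty forbidden sets so that rewiring never fails --- is the bookkeeping caused by hubs of degree $n-1$ and $n-2$: rewiring can break only through a vertex forbidden by too many hubs, or through a cycle of $\mathcal C$ with very few $X$-vertices (leaving little freedom in the choice of splicing subpaths). Pinning down and dispatching these residual tight configurations is what genuinely uses both the reduction to $n \ge 8$ (for room) and the $|S| \in \{2,3\}$ cases of dHp (for the local density of $H$ around popular forbidden vertices).
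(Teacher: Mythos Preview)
Your approach is genuinely different from the paper's. The paper never touches Theorem~\ref{thm:hungarian-cycle-cover}; instead it takes a minimum collection of vertex-disjoint $X$--$X$ paths covering $X$ built \emph{only} from degree-$2$ vertices of $Y$ (singletons allowed), orders the paths, and sets up an auxiliary bipartite graph $F$ between the ``intended pairs'' $(x_R^i,x_L^{i+1})$ of consecutive endpoints and the set $Y_l$ of high-degree vertices. It then verifies Hall's condition in $F$, optimizing the path ordering to maximize $|E(F)|$. The cases $|S|\le 2$ and $|S|\ge 5$ are immediate (from dHp and from $\deg(y)\ge |X|-2$ respectively); all the work sits in $|S|\in\{3,4\}$, handled by an explicit finite case analysis that uses Theorem~\ref{thm:cycle-for-7} only to dispose of degenerate subcases with $|X|\le 4$. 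The clean separation --- degree-$2$ vertices form the skeleton, hubs only glue --- is what makes the case analysis bounded.

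Your cycle-cover-then-merge route mixes hubs and degree-$2$ vertices from the outset, and this is where the proposal stops being a proof. In the second regime the sentence ``since $e_H(X)\ge n-m$, almost every $H$-edge is used inside $\mathcal C$'' is not valid: $e_H(X)\ge n-m$ is only a lower bound, while $\mathcal C$ uses $n-\sum_i h_i\le n-m+1$ edges of $H$, so arbitrarily many $H$-edges can remain unused. The subsequent claim that the $|S|\in\{2,3\}$ bounds ``force the local structure \dots\ and this is exactly what is needed to rewire'' is an assertion with no rewiring exhibited and no list of configurations to be excluded. The first regime has the same flavour of gap: you assert that the only obstruction is a cycle on two $X$-vertices equal to $\bar w$ or $\bar{w'}$, and that ``the $|S|=2$ form of dHp and the minimality of $\mathcal C$ rule [it] out'', but dHp on that pair only yields two common neighbours, which may well be the two $Y$-vertices already on that cycle --- nothing here visibly contradicts the minimality of $t$ or of the hub count. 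So while the overall strategy is plausible, the merging step in both regimes is not carried out, and the second regime in particular needs a real argument rather than an appeal to density.
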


When considering partial progress towards these open conjectures by adding degree conditions, it is natural to ask: what can we say about the possible vertex degrees in a dHp bigraph? Of course, the double Hall property only becomes easier to satisfy with the addition of edges, so we are most interested in finding dHp bigraphs where degrees are low.

To maintain low degree in $Y$ alone, there is a natural construction of a dHp bigraph with $|X|=n$ and $|Y|=n(n-1)$ where the maximum degree in $Y$ is $2$. To construct this graph, for every pair $x_i, x_j \in X$, add two vertices $y_{ij}^1, y_{ij}^2$ to $Y$ with edges only to $x_i$ and $x_j$. In this construction, although vertices in $Y$ have very low degree, all vertices in $X$ have degree $2(n-1)$.

It is also possible to construct a dHp bigraph where the maximum degree in $X$ is low. In~\cite{Hungarian}, Bar\'at et al.\ construct a dHp bigraph with $|X|=n=2^k$ and a maximum degree in $X$ of $k+1$. However, in this construction, two vertices in $Y$ have degree $n$, and other vertices in $Y$ unavoidably have degree linear in $n$.

This leaves a natural follow-up question: what is the smallest maximum degree we can achieve if we consider vertices in both $X$ and $Y$? We prove a lower bound on the maximum degree.

\begin{theorem}\label{thm:dhp-maximum-degree-bound}
    Let $G = (X, Y)$ be a dHp bigraph with $|X| = n \ge 2$ and maximum degree $\Delta(G)=d$. Then $n \le \binom d2 + 1$; it follows that $d \ge \sqrt{2n}$.
\end{theorem}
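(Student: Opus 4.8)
The plan is to exploit the double Hall property directly at the level of single vertices and pairs. Suppose $G=(X,Y)$ is dHp with $|X|=n$ and $\Delta(G)=d$. Pick any vertex $y\in Y$ and look at its neighborhood $N(y)\subseteq X$; since $\deg(y)\le d$ we have $|N(y)|\le d$. The key observation is that every pair $\{x,x'\}\subseteq X$ must have at least two common... no — more precisely, the dHp condition applied to the two-element set $S=\{x,x'\}$ says $|N^2(\{x,x'\})|\ge 2$, i.e. every pair of vertices in $X$ has at least two common neighbors in $Y$. I would first record this reformulation as the engine of the argument: for all $x\ne x'$ in $X$, $|N(x)\cap N(x')|\ge 2$.

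Next I would count incidences between pairs from $X$ and vertices of $Y$. For each $y\in Y$, the set $N(y)$ contributes $\binom{\deg(y)}{2}\le\binom d2$ pairs from $X$ that have $y$ as a common neighbor. Summing over $y\in Y$ counts each pair $\{x,x'\}$ exactly $|N(x)\cap N(x')|$ times, which is at least $2$ by the reformulation. Hence
\[
2\binom n2 \le \sum_{y\in Y}\binom{\deg(y)}{2} \le |Y|\binom d2,
\]
but this only bounds $|Y|$ from below, not $n$. To get a bound on $n$ alone I instead fix one vertex and use a more local argument: take a vertex $x^*\in X$ of maximum degree $d$ (or any vertex, using $\deg(x^*)\le d$); every other vertex $x\in X\setminus\{x^*\}$ shares at least two neighbors with $x^*$, so in particular $x$ has a neighbor in $N(x^*)$. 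Actually the clean route is: since $|N(x^*)\cap N(x)|\ge 2$ for each of the $n-1$ choices of $x$, and each $y\in N(x^*)$ can serve as a common neighbor, I would like $\sum_{y\in N(x^*)}(\deg(y)-1)$ to dominate $2(n-1)$; combined with $|N(x^*)|\le d$ and $\deg(y)\le d$ this gives $d(d-1)\ge 2(n-1)$, i.e. $n\le\binom d2+1$, and then solving the quadratic yields $d\ge\sqrt{2n}$ (after checking the inequality $\binom d2+1\ge n$ forces $d^2-d+2\ge 2n$, hence $d^2\ge 2n$).

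The main obstacle is making the double-counting around $x^*$ rigorous: I need that the pairs $\{x^*,x\}$ for $x\in X\setminus\{x^*\}$ are "witnessed" with multiplicity at least $2$ by vertices in $N(x^*)$, and that each such witness $y$ (lying in $N(x^*)$) is counted at most $\deg(y)-1$ times across all $x$ — which holds because $y$'s neighbors other than $x^*$ are exactly the $x$'s it can witness, and there are $\deg(y)-1\le d-1$ of them. So $2(n-1)\le\sum_{x\ne x^*}|N(x^*)\cap N(x)|=\sum_{y\in N(x^*)}(\deg(y)-1)\le |N(x^*)|(d-1)\le d(d-1)$, giving $n\le\binom d2+1$. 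The final step, deducing $d\ge\sqrt{2n}$, is a one-line calculation: $n\le\binom d2+1$ gives $2n\le d^2-d+2\le d^2$ for $d\ge 2$ (and the cases $n=2,3$ with small $d$ are checked by hand), so $d\ge\sqrt{2n}$.
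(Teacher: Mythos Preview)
Your proposal is correct and takes essentially the same approach as the paper: fix a vertex $x^*\in X$, use the observation that every other $x\in X$ satisfies $|N(x^*)\cap N(x)|\ge 2$, and double count via $\sum_{y\in N(x^*)}(\deg(y)-1)\le d(d-1)$ to obtain $n\le\binom d2+1$. One small cleanup: the hand-check for $n=2,3$ is unnecessary, since the double Hall property on any two-element subset of $X$ already forces some $y\in Y$ with $\deg(y)\ge 2$, so $d\ge 2$ holds automatically and your inequality $d^2-d+2\le d^2$ applies.
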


It is possible to construct graphs in which the inequality of Theorem~\ref{thm:dhp-maximum-degree-bound} is tight. This is a very strong condition on the structure of the graph: $n = \binom d2 + 1$ exactly when $G$ is the incidence graph of a symmetric block design called a {\em biplane}. However, it is conjectured that only finitely many biplanes exist, which means that perhaps we cannot hope for a general construction of this type; see Section~\ref{sec:biplanes} for more details. 

Instead, we use a natural notion of the product of two bipartite graphs to construct additional examples. We prove that the bipartite product of two dHp bigraphs is always dHp. This allows us to find infinitely many low-degree dHp bigraphs, though the maximum degree no longer quite matches the lower bound of Theorem~\ref{thm:dhp-maximum-degree-bound}. Our constructions are summarized in the following theorem. For clarity, recall that a $d$-regular bigraph is one in which every vertex has degree exactly $d$.

\begin{theorem}\label{thm:dhp-maximum-degree-construction}
For $d \in \{2,3,4,5,6,9,11,13\}$, there exists a $d$-regular bigraph $(X, Y)$ satisfying the double Hall property with $|X| = |Y| = n = \binom d2 + 1$ vertices. Furthermore, for infinitely many values of $n$, there exists a $d$-regular bigraph $(X, Y)$ satisfying the double Hall property with $d = n^{\alpha}$, where $\alpha = \log_{79} 13 \approx 0.587$. 
\end{theorem}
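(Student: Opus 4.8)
The plan is to establish the two assertions separately, both by way of biplanes.

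\smallskip
\noindent\textbf{The exact constructions.} Recall that a \emph{biplane of order $d-2$} is a symmetric $2$-$\bigl(\binom d2+1,\,d,\,2\bigr)$ design; its incidence graph is $d$-regular with $n=\binom d2+1$ vertices in each part. Biplanes are classically known to exist exactly for orders $0$ (degenerate), $1,2,3,4,7,9,11$ --- equivalently for $d\in\{2,3,4,5,6,9,11,13\}$ --- and to be nonexistent for $d\in\{7,8,10,12\}$, with $d=2$ being the degenerate biplane whose incidence graph is $C_4=K_{2,2}$ (visibly dHp). So this part reduces to the claim that \emph{the incidence graph $G=(X,Y)$ of a biplane is dHp}. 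Fix $S\subseteq X$ with $|S|=s\ge 2$ and let $\mathcal B=N^2(S)$ be the set of blocks meeting $S$ in at least two points. If $s=2$, then $\mathcal B$ is exactly the set of $\lambda=2$ blocks through the pair $S$, so $|\mathcal B|=2=s$. If $s\ge 3$: every pair of $S$ lies in exactly two blocks of $\mathcal B$ (the two design blocks through it), and I claim each $x\in S$ lies in at least three blocks of $\mathcal B$ --- for otherwise $x$ lies in at most two, and since for each of the $s-1\ge 2$ other points $x'\in S$ the two blocks through $\{x,x'\}$ lie in $\mathcal B$ and contain $x$, both of $x$'s blocks must contain all of $S$; but two distinct blocks of a symmetric design meet in only $\lambda=2$ points, contradicting $s\ge3$. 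Now if $M$ is the $|\mathcal B|\times s$ point--block incidence matrix of $\mathcal B$ against $S$, then $M^{\top}M=\operatorname{diag}(r_x-2:x\in S)+2J$ with $r_x\ge 3$ and $J$ all-ones, which is positive definite; hence $\operatorname{rank}M=\operatorname{rank}M^{\top}M=s$, so $|\mathcal B|\ge s$.

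\smallskip
\noindent\textbf{The infinite family.} For bigraphs $G_i=(X_i,Y_i)$ define the \emph{product} $G_1\otimes G_2=(X_1\times X_2,\ Y_1\times Y_2)$ by declaring $(x_1,x_2)$ adjacent to $(y_1,y_2)$ iff $x_1\sim y_1$ in $G_1$ and $x_2\sim y_2$ in $G_2$; equivalently, $G_1\otimes G_2$ is the incidence graph of the hypergraph with edge set $\{e_1\times e_2:e_1\in E(H_1),\,e_2\in E(H_2)\}$, where $H_i$ has incidence graph $G_i$. Then $\deg(x_1,x_2)=\deg_{G_1}(x_1)\deg_{G_2}(x_2)$, and likewise on $Y$, so the product of a $d_1$-regular and a $d_2$-regular bigraph is $d_1d_2$-regular with $|X|=|X_1||X_2|$. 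Granting the lemma below, let $B$ be the incidence graph of the $2$-$(79,13,2)$ biplane (which, by the first part, is $13$-regular, dHp, with $|X|=79$), and iterate: $B^{\otimes k}$ is a $13^k$-regular dHp bigraph with $n=|X|=79^k$; since $13^k=(79^k)^{\log_{79}13}=n^{\alpha}$ with $\alpha=\log_{79}13\approx 0.587$, letting $k$ range over the positive integers produces infinitely many such $n$.

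\smallskip
\noindent\textbf{The lemma, and where the work is.} The lemma is: \emph{if $G_1$ and $G_2$ are dHp, so is $G_1\otimes G_2$}. Fix $S\subseteq X_1\times X_2$ with $|S|\ge 2$. Since $N^2(S)=\bigsqcup_{y_1\in Y_1}\{y_1\}\times R(y_1)$ where $R(y_1)=\{y_2:(y_1,y_2)\in N^2(S)\}$, it suffices to prove $\sum_{y_1}|R(y_1)|\ge|S|$. Writing $S^{-1}(x_2)=\{x_1:(x_1,x_2)\in S\}$ and $w_{y_1}(x_2)=|N_{G_1}(y_1)\cap S^{-1}(x_2)|$, one checks $(y_1,y_2)\in N^2(S)$ iff $\sum_{x_2\in N_{G_2}(y_2)}w_{y_1}(x_2)\ge 2$. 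If the support $A_{y_1}=\{x_2:w_{y_1}(x_2)\ge1\}$ has at least two elements, then $R(y_1)\supseteq N^2_{G_2}(A_{y_1})$, so $|R(y_1)|\ge|A_{y_1}|$ by the double Hall property of $G_2$; if $A_{y_1}$ is a single vertex of $w_{y_1}$-weight $\ge 2$, then $|R(y_1)|$ equals its degree, which is $\ge 2$ (every $X$-vertex of a dHp bigraph has degree at least $2$); the only loss occurs at $y_1$ of total weight exactly $1$, i.e.\ ``seeing'' exactly one element of $S$, where $|R(y_1)|=0$. Summing, and using $\sum_{y_1}|A_{y_1}|=\sum_{x_2}|N_{G_1}(S^{-1}(x_2))|\ge\sum_{x_2}|S^{-1}(x_2)|=|S|$ (Hall's condition, which the double Hall property of $G_1$ implies), we get $\sum_{y_1}|R(y_1)|\ge |S|-\#\{y_1 \text{ seeing exactly one element of }S\}$. \emph{Recovering this correction term is the heart of the proof}: it should be absorbed either by sharpening the per-$y_1$ bound (using surplus in Hall's condition for $G_1$, or the blocks of $N^2_{G_2}(A_{y_1})$ not yet counted) or by an induction that removes one element of $S$, or a minimum-degree vertex of a factor, at each step. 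The remaining checks --- the biplane parameters, and the regularity and vertex counts under the product --- are routine.
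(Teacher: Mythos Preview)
Your treatment of the first assertion is correct and pleasantly different from the paper's. Where the paper proves that biplane incidence graphs are dHp by a two-case counting argument (either the maximum $\deg_S(y)=d$ is large enough that $\binom d2+1\ge |S|$, or else one double-counts length-$2$ paths with endpoints in $S$), you give a clean Fisher-type rank argument: the $s\times s$ matrix $M^{\top}M=\operatorname{diag}(r_x-2)+2J$ is positive definite once each $r_x\ge 3$, which you establish via the $\lambda=2$ intersection property. Both are short; yours is arguably more elegant and would generalize to other symmetric designs.

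The second assertion, however, has a genuine gap that you yourself flag. Your analysis of the product gets as far as
\[
\sum_{y_1}|R(y_1)|\ \ge\ |S|\ -\ \#\{\,y_1:\text{$y_1$ sees exactly one element of $S$}\,\},
\]
and then stops, offering only heuristics (``surplus in Hall's condition'', ``induction removing one element'') for absorbing the correction term. None of these is carried out, and the correction term can be comparable to $|S|$ (think of $S$ a matching-shaped set with all first and all second coordinates distinct: then every $y_1$ with exactly one neighbour in the projection $T\subseteq X_1$ contributes to the deficit). The paper closes this gap with an idea you are missing: it invokes the cycle-cover theorem of Bar\'at et al.\ (any dHp bigraph has a collection of disjoint cycles covering $X$) applied to the projection $T\subseteq X_1$. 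Orienting those cycles gives, for each $t\in T$, a successor $y^+(t)\in Y_1$ and a next $X$-vertex $x^+(t)\in T$; one then finds $|S(t)|$ elements of $N^2(S)$ inside $\{y^+(t)\}\times Y'$, using the dHp of $G_2$ when $|S(t)|\ge 2$ and, crucially, pairing the unique element of $S(t)$ with an element of $S(x^+(t))$ when $|S(t)|=1$. The cycle structure is exactly what guarantees that these singleton fibres always have a partner, which is the obstacle your bookkeeping cannot overcome on its own. Without this (or an equivalent device), the product lemma is not proved, and the infinite family does not follow.
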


We can approach the lower bound more closely with a probabilistic approach. Let $\mathbb G_{n,n,p}$ be the random bigraph with partitions $X$ and $Y$, where $|X|=|Y|=n$ and each edge between $X$ and $Y$ appears independently with probability $p$. We can write any probability $p = p(n)$ in the form $\sqrt{\frac{2\log n + \log \log n + c_n}{n}}$ simply by defining $c_n = np^2 - 2\log n - \log \log n$. Then the following theorem gives the threshold at which $\mathbb G_{n,n,p}$ gains the double Hall property:


\begin{theorem}\label{thm:gnnp-is-dhp}
If $p:= \sqrt{\frac{2\log n + \log \log n + c_n}{n}}$, then the probability that $\mathbb G_{n,n,p}$ has the double Hall property satisfies
\[
    \lim_{n \to \infty} \Pr[\mathbb G_{n,n,p}\text{ is dHp}] =
    \begin{cases}
        0 & c_n \to -\infty, \\
       e^{-e^{-c}} &  c_n \to c, \\
        1 & c_n \to +\infty.
    \end{cases}
\]
\end{theorem}


At this threshold, the average degree is $np \sim \sqrt{2n \log n}$, and with high probability, the maximum degree is also asymptotic to $\sqrt{2n \log n}$. Therefore the random graph $\mathbb G_{n,n,p}$ at the thresholds gives examples of graphs satisfying the double Hall property that only differ from the lower bound of Theorem~\ref{thm:dhp-maximum-degree-bound} by a factor asymptotic to $\sqrt{\log n}$.

Additionally, we recall that Frieze~\cite{FRIEZE1985327} gave the threshold at which $\mathbb G_{n,n,p}$ becomes Hamiltonian (i.e., contains a cycle covering all vertices):
\begin{theorem}[Frieze~\cite{FRIEZE1985327}]
    If $p := \frac{\log n + \log \log n + c_n}{n}$, then the probability that $\mathbb G_{n, n, p}$ is Hamiltonian satisfies
\[ 
\lim_{n \to \infty} \Pr[\mathbb G_{n,n,p}\text{ is Hamiltonian}] =
    \begin{cases}
        0 & c_n \to -\infty, \\
       e^{-2e^{-c}} &  c_n \to c, \\
        1 & c_n \to +\infty.
    \end{cases}
\]
\end{theorem}
Since the threshold in Theorem~\ref{thm:gnnp-is-dhp} asymptotically exceeds that in the above theorem, this implies that $\mathbb G_{n,n,p}$ is Hamiltonian with high probability at this threshold, giving further evidence for Conjecture~\ref{con-Salia}.

Our paper is organized as follows. In the next section, we prove Theorem~\ref{thm:cycle-for-7}. In Section~\ref{section:big-degree-Y-for-7}, we show the equivalence of Conjectures~\ref{conj:Big-degree-Y} and~\ref{subconj:Big-degree-Y} and prove Theorem~\ref{thm:Big-degree-Y-for-7} by applying Theorem~\ref{thm:cycle-for-7}. In Section~\ref{section:set-of-degree-Y} we prove Theorem~\ref{thm:set-of-degree-Y}. Sections~\ref{section:degree-bound-on-dhp} and~\ref{section:dHp-random-graph} are devoted to the possible maximum degree of dHp graphs: in Section~\ref{section:degree-bound-on-dhp}, we give a lower bound on the maximum degree and provide deterministic constructions with low maximum degree, and in Section~\ref{section:dHp-random-graph}, we consider probabilistic constructions and prove Theorem~\ref{thm:gnnp-is-dhp}.

\section{Proof of Theorem~\ref{thm:cycle-for-7}}\label{section:cycle-for-7}

\subsection{Preliminaries}
Let $G=(X,Y)$  be a bigraph with $|X|\geq 3$.
We say $G$ is {\em critical} if the following conditions hold:
\begin{enumerate}[label={(\arabic*)}]
    \item $G$ satisfies the super-neighborhood property but it is not supercyclic,
    \item $\mathsf{\Lambda}^2(X) = Y$, and
    \item $G[X'\cup Y]$ is supercyclic for any proper subset $X'\subset X$ with $|X'|\geq 3$.
\end{enumerate}
 
Moreover, a critical bigraph $G=(X, Y)$ is said to be \emph{saturated critical} if, in addition to being critical, it satisfies the condition that $G + xy$ is supercyclic for any nonadjacent pair $\{x, y\}$ with $x \in X$ and $y \in Y$.
An snp bigraph $G=(X, Y)$ is said to be \emph{snp-minimal} if $G[X \cup Y\backslash\{y\}]$ is not snp for any $y \in Y$.

Conjecture~\ref{con-KLLZ21} states that there is no critical bigraph. However, if there is a critical bigraph, then there is a saturated critical bigraph that is also snp-minimal: Let $G=(X,Y)$ be an snp bigraph that is not supercyclic and minimizes $|X\cup Y|$. Since the complete bipartite graph with partitions $X$ and $Y$ is supercyclic, there is a bigraph $G'=(X, Y)$ with $G'\supseteq G$ that is saturated critical. We claim that $G'$ is also snp-minimal. Otherwise, let $y\in Y$ such that $G'-y$ is snp. By the minimality of $|X\cup Y|$, $G'-y$ is supercyclic, which contradicts that $G'$ is not supercyclic. 

Let us summarize some results from~\cite{superneighbor} on critical bigraphs below for future use. 

\begin{lemma}\label{lem-from-KLLZ} If a bigraph $G=(X, Y)$ is critical, then the following holds for each $x\in X$ and cycle $C$ with $V(C)\cap X = X\setminus\{x\}$:
\begin{itemize} 
    \item [{\em (i)}] $|\mathsf{\Lambda}(x) \cap V(C)| \ge 2$ (Lemma $16$ in~\cite{superneighbor}). \label{lem-from-KLLZ-i}
\end{itemize}

Moreover, if $G$ is saturated critical, then the following hold: 

\begin{itemize}
    \item [{\em (ii)}] If there exists $x_0 \in X$ with degree $2$, then $
    \mathsf{\Lambda}(y) = X$ for every $y \in \mathsf{\Lambda}(x_0)$, and $\deg(x) \ge 4$ for every $x \in X\setminus\{x_0\}$. Consequently, at most one vertex in $X$ can have degree $2$ (Lemma $21$ in \cite{superneighbor}); \label{lem-from-KLLZ-ii}
    
    \item [{\em (iii)}] $\deg(y) \neq |X| - 1$ or $|X| - 2$ for any $y\in Y$ (Lemmas $20$ and $22$ in \cite{superneighbor}). \label{lem-from-KLLZ-iii}
\end{itemize}

Additionally, if $G$ is both saturated critical and snp-minimal, then the following hold:
\begin{itemize}
\item [{\em (iv)}] For any two distinct vertices $y_1, y_2 \in Y$ of degree $2$, $\mathsf{\Lambda}(y_1) \ne \mathsf{\Lambda}(y_2)$ (Lemma $23$ in \cite{superneighbor}); \label{lem-from-KLLZ-iv} 
\item [{\em (v)}] The vertex $x$ has at least two non-neighbors in $V(C)\cap Y$ (Lemma $24$ in~\cite{superneighbor}).\label{lem-from-KLLZ-v}
\end{itemize}
\end{lemma}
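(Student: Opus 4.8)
\emph{Proof proposal.} Items (i)--(v) are, respectively, Lemmas~16, 21, 20 and~22, 23, and~24 of~\cite{superneighbor}, so the cleanest plan is simply to quote those results and record this as a toolbox lemma; I would not reprove any of them here. If a self-contained argument were wanted, all five parts run on the same engine, which I would set up as follows.

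Fix a critical bigraph $G=(X,Y)$. Being critical gives four standing facts: $G$ is snp; by condition~(2) of the definition $N^2(X)=Y$, so every $y\in Y$ has degree at least $2$ and $G[X\cup Y]=G[X\cup N^2(X)]$ is $2$-connected; and by condition~(3) the \emph{only} subset of $X$ of size $\ge 3$ not realized as $V(C)\cap X$ for some cycle $C$ is $X$ itself. For a chosen $x\in X$, the subgraph $G[(X\setminus\{x\})\cup Y]$ is supercyclic, so a cycle $C$ with $V(C)\cap X=X\setminus\{x\}$ exists; write it as the alternating cycle $x_1z_1x_2z_2\cdots x_{n-1}z_{n-1}x_1$ with $x_i\in X$ and $z_i\in Y$, where $n=|X|$. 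Every proof is then by contradiction: if the asserted degree or neighborhood bound fails, I would reroute $C$ --- either splicing $x$ between two consecutive $x_i$'s through a private $Y$-neighbor of $x$, or swapping an off-cycle vertex of $Y$ for an on-cycle one --- to build a cycle through all of $X$, contradicting that $G$ is not supercyclic. The $2$-connectivity of $G[S\cup N^2(S)]$ is what guarantees the rerouting paths exist.

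Concretely: for~(i), if $|N(x)\cap V(C)|\le 1$ then all but at most one neighbor of $x$ lies off $C$, and $2$-connectivity of $G[X\cup Y]$ produces two internally disjoint paths from $x$ to $C$, through which $C$ can be enlarged. For~(ii)--(iv) I would additionally exploit saturation: for a nonedge $xy$, $G+xy$ is supercyclic, hence has a cycle through all of $X$, and since $G$ has none this cycle must use $xy$; this pins down the local structure at $x$ and $y$. Applying this to a degree-$2$ vertex $x_0$ forces both its $Y$-neighbors onto every near-spanning cycle, which in turn forces them to be adjacent to all of $X$, and a counting/parity argument then yields $\deg(x)\ge 4$ for $x\ne x_0$ and uniqueness of the degree-$2$ vertex; part~(iii) is the parallel dichotomy for $Y$, showing a near-spanning cycle can always be closed through a vertex of degree $|X|-1$ or $|X|-2$. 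For~(iv) I would use snp-minimality: two degree-$2$ vertices with the same neighborhood $\{a,b\}$ are twins, so deleting one leaves $N^2(S)$ unchanged for every relevant $S$, contradicting that $G$ is snp-minimal. Part~(v) combines saturation and minimality in the same vein: if $x$ had at most one non-neighbor among $V(C)\cap Y$, one could both splice $x$ into $C$ and preserve validity, or violate a minimality count on $Y$.

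I expect the genuinely hard part to be~(iii): handling a vertex of degree $|X|-2$ requires simultaneously controlling the (up to two) vertices of $X$ it misses together with a full case analysis of how the near-spanning cycle meets them --- which is exactly why~\cite{superneighbor} devotes two separate lemmas (Lemmas~20 and~22) to it. Since nothing in the present paper needs a new proof of any of these statements, the intended plan is just to cite~\cite{superneighbor}.
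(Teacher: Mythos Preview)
Your proposal is correct and matches the paper exactly: Lemma~\ref{lem-from-KLLZ} is stated in the paper purely as a summary of results from~\cite{superneighbor} with no proof given, so simply citing Lemmas~16, 20--24 there is precisely what is done. The proof sketches you add are extra and not required, though your remark that part~(iv) follows from snp-minimality via the twin-deletion argument is on point.
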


Based on Lemma~\ref{lem-from-KLLZ-ii} (ii), we first show that a saturated critical bigraph minimizing $|X|$ has a stronger property:

\begin{lemma}\label{lemma:no-degree-2}
If $G=(X, Y)$ is a saturated critical bigraph such that $|X|$ is minimum among all critical bigraphs, then $\deg(x) \ge 3$ for every $x \in X$. Additionally, if $G$ is also snp-minimal, then there is a vertex $x \in X$ such that $\deg(x) \ge 4$.  
\end{lemma}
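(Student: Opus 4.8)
The plan is to handle both statements by contradiction, in each case deriving either that $G$ is supercyclic (contradicting that a critical bigraph is, by definition, not supercyclic) or that a critical bigraph with a strictly smaller $X$-part exists (contradicting minimality). Throughout I will use the structural consequences of Lemma~\ref{lem-from-KLLZ}\,(ii).

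\textbf{First statement.} First, $\deg(x)\ge 2$ for every $x\in X$: if $\deg(x)\le 1$, then for any $S\ni x$ with $|S|\ge 3$ the vertex $x$ has degree at most $1$ in $G[S\cup N^2(S)]$, so that graph is not $2$-connected, contradicting that $G$ is snp. Now suppose $\deg(x_0)=2$ with $N(x_0)=\{y_1,y_2\}$. By Lemma~\ref{lem-from-KLLZ}\,(ii), $N(y_1)=N(y_2)=X$, the vertex $x_0$ is the only vertex of $X$ of degree $2$, and $\deg(x)\ge 4$ for all $x\in X\setminus\{x_0\}$. The crucial intermediate step is the strengthened inequality
\[
  |N^2(S)|\ \ge\ |S|+1 \qquad\text{for every } S\subseteq X':=X\setminus\{x_0\}\text{ with }|S|\ge 2 .
\]
This holds because $N^2(S\cup\{x_0\})=N^2(S)$: any vertex adjacent to $x_0$ is $y_1$ or $y_2$, and both already lie in $N^2(S)$ since they are adjacent to every vertex of $S$. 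Hence applying the snp inequality to the set $S\cup\{x_0\}$, which has size $\ge 3$ (or, when $S\cup\{x_0\}=X$, using $N^2(X)=Y$ together with $|Y|\ge|X|$), gives $|N^2(S)|=|N^2(S\cup\{x_0\})|\ge|S|+1$.

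Using this, I would pass to the bigraph $\Gamma=(X',Y^{\sharp})$ obtained from $G$ by deleting $x_0$ and identifying $y_1$ with $y_2$ into a single vertex $z$ with $N_{\Gamma}(z)=X'$ (all other adjacencies unchanged). For $S\subseteq X'$ with $|S|\ge 2$ one has $N^2_{\Gamma}(S)=(N^2_G(S)\setminus\{y_1,y_2\})\cup\{z\}$, so $|N^2_{\Gamma}(S)|=|N^2_G(S)|-1\ge|S|$ by the inequality above; and since $\Gamma[S\cup N^2_{\Gamma}(S)]$ contains the vertex $z$ adjacent to all of $S$, it is $2$-connected as soon as $\Gamma[S\cup(N^2_{\Gamma}(S)\setminus\{z\})]$ is connected, where the surplus built into the strengthened inequality (forcing each $s\in S$ to have a neighbour in $N^2_G(S)\setminus\{y_1,y_2\}$) is what one must leverage to exclude a disconnection. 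Granting that $\Gamma$ is snp with $|X'|<|X|$, there is a dichotomy. If $\Gamma$ is not supercyclic, then by discarding vertices of degree at most $1$ and saturating — exactly as in the paragraph preceding Lemma~\ref{lem-from-KLLZ} — one obtains a critical bigraph whose $X$-part has at most $|X'|<|X|$ vertices, contradicting minimality. If instead $\Gamma$ is supercyclic, it has a cycle $C$ with $V(C)\cap X'=X'$; if $z\notin V(C)$ first reroute $C$ through $z$ by swapping one $Y^{\sharp}$-vertex of $C$ (with $C$-neighbours $a,b\in X'$) for $z$, and then expand the segment $a\,z\,b$ into $a\,y_1\,x_0\,y_2\,b$ (valid since $y_1,y_2$ are adjacent to all of $X'$ and $x_0\sim y_1,y_2$), producing a cycle of $G$ covering all of $X$ and contradicting that $G$ is not supercyclic. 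Either way we reach a contradiction, so $\deg(x)\ge 3$ for every $x\in X$.

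\textbf{Second statement.} Assume additionally that $G$ is snp-minimal, and suppose for contradiction that $\deg(x)\le 3$, hence $\deg(x)=3$ by the first statement, for every $x\in X$. Then $\sum_{y\in Y}\deg(y)=3|X|$ while $|Y|\ge|X|$, so the $Y$-degrees are small on average. I would combine this with Lemma~\ref{lem-from-KLLZ}\,(iii) (no $y\in Y$ has $\deg(y)\in\{|X|-1,|X|-2\}$), Lemma~\ref{lem-from-KLLZ}\,(iv) (distinct degree-$2$ vertices of $Y$ have distinct neighbourhoods), and snp-minimality, which guarantees that every $y\in Y$ is \emph{pinned}: deleting it destroys snp, so $y\in N^2(S)$ for some $S$ with $|S|\ge 3$ and either $|N^2(S)|=|S|$ or $G[S\cup N^2(S)]-y$ fails to be $2$-connected. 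A charging argument over these tight sets $S$ — each $Y$-vertex charged to one such $S$, each such $S$ able to absorb only boundedly many charges because the $X$-side is $3$-regular — is expected to yield the contradiction. The main obstacles I anticipate are exactly the two $2$-connectivity issues: for the first statement, verifying that $\Gamma$ is snp (equivalently, ruling out that every cycle of $G-x_0$ spanning $X'$ must pass through \emph{both} universal vertices $y_1$ and $y_2$), and for the second statement, carrying out the pinning/charging bookkeeping when it is $2$-connectivity rather than a cardinality that fails upon deleting $y$.
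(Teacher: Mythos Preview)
Your first-statement argument isolates the right key inequality $|N^2(S)|\ge|S|+1$ for $S\subseteq X':=X\setminus\{x_0\}$, but then takes an unnecessarily complicated detour: identifying $y_1$ with $y_2$ forces you to verify $2$-connectivity in $\Gamma$, which you yourself flag as unfinished. The paper bypasses this completely. Instead of identifying, simply \emph{delete}: set $G'=G-\{x_0,y_1\}$. Since $y_2$ remains adjacent to all of $X'$, for any $S\subseteq X'$ with $|S|\ge 2$ one has $N^2_{G'}(S)=N^2_G(S\cup\{x_0\})\setminus\{y_1\}$, so $|N^2_{G'}(S)|\ge|S|$ by your own inequality. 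This shows $G'$ is dHp, and since dHp implies snp (as noted in the introduction), no separate $2$-connectivity check is needed. Non-supercyclicity of $G'$ follows by the same cycle-expansion trick you sketched: if $y_2\in V(C)$ replace an edge $y_2x'$ by $y_2x_0y_1x'$; if $y_2\notin V(C)$ replace a segment $ay'b$ of $C$ by $ay_1x_0y_2b$.

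For the second statement, your charging-over-tight-sets plan is far too vague to constitute a proof, and the paper's argument is entirely different and concrete. The idea you are missing is to exploit criticality directly: for any three vertices $x_0,x_1,x_2\in X$ there is a cycle meeting $X$ exactly in $\{x_0,x_1,x_2\}$, which forces $|(N(x_1)\cup N(x_2))\cap N(x_0)|\ge 2$ (Fact~\ref{fact:6_cycle}). Fix $x_0$ and set $X_1=\{x\ne x_0:|N(x)\cap N(x_0)|=1\}$ and $X_2=X\setminus(X_1\cup\{x_0\})$. The cycle condition together with $\deg(x_0)=3$ pigeonholes $|X_1|\le 3$; a short count shows $X_1\ne\emptyset$ (else $|N^2(X)|\le 3+\lfloor(|X|-1)/2\rfloor<|X|$). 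Picking $x_1\in X_1$ and applying the cycle condition to each triple $\{x_0,x_1,x_2\}$ forces every other vertex to have a neighbour in $N(x_1)\setminus N(x_0)$, whence $N(X_2)\subseteq N(x_0)\cup N(x_1)$, a set of size $5$, giving $|X_2|\le 3$. Since $|X|\ge 7$ (Theorem~\ref{thm:KLLZ21-a}) this forces $|X|=7$, $|X_1|=|X_2|=3$, and one more application of the triple-cycle condition yields $|N^2(X)|\le 6<7$, contradicting snp. None of this uses snp-minimality or parts~(iii)--(iv) of Lemma~\ref{lem-from-KLLZ} in the way your plan anticipated.
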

\begin{proof}
By Theorem~\ref{thm:KLLZ21-a}, we may assume $|X| \ge 7$. 
We first show that $\deg_G(x) \ge 3$ for every $x \in X$; otherwise, let $x \in X$ such that $\mathsf{\Lambda}_G(x) = \{y, y'\}$. By Lemma~\ref{lem-from-KLLZ-ii} (ii), $\mathsf{\Lambda}_G(y) = \mathsf{\Lambda}_G(y') = X$. We claim that $G' = G-\{x, y\}$ is snp but not supercyclic, which gives a contradiction to the minimality of $|X|$. 

To show $G'$ is snp, we prove $G'$ is dHp instead. For any $A'\subseteq X'$ with $|A'| \ge 2$, since $G$ is snp, it follows that $|\mathsf{\Lambda}_G^2(A'\cup \{x\})| \ge |A'|+1$. As $\mathsf{\Lambda}_G(x) =\{y, y'\}$ and $\mathsf{\Lambda}_G(y') = X$, we have $\mathsf{\Lambda}_{G'}^2(A') = \mathsf{\Lambda}_G^2(A'\cup\{x\})-\{y\}$ and so $|\mathsf{\Lambda}^2_{G'}(A')| \ge |A'|$.
	
To show $G'$ is not supercyclic, it suffices to show that no cycle in $G'$ covers all of $X' = X \setminus \{x\}$. For otherwise, let $C$ be such a cycle. If $y' \notin V(C)$, we replace a segment $x_1y_1x_2 \subset C$ with $x_1yxy'x_2$ to form a cycle in $G$ covering all of $X$, contradicting the criticality of $G$.  If $y' \in V(C)$, assume $y'x'\in E(C)$. By replacing $y'x'$ with $y'xyx'$, we again achieve a contradiction. Thus $G'$ is not supercyclic. 

Therefore $\deg_G(x)\geq 3$ for all $x\in X$.

For the second half of the proof, suppose by contradiction that $\deg_G(x) = 3$ for every $x \in X$. Let $x_0\in X$, and define $X_1=\{x\in X \, : \, |\mathsf{\Lambda}_G(x)\cap \mathsf{\Lambda}_G(x_0)| = 1\}$ and $X_2 = X \setminus (X_1\cup \{x_0\})$. 

Note that for any two distinct vertices $x_1, x_2\in X\setminus\{x_0\}$, there is a cycle $C$ such that $V(C)\cap X=\{x_0, x_1, x_2\}$, which implies $|(\mathsf{\Lambda}_G(x_1)\cup \mathsf{\Lambda}_G(x_2))\cap \mathsf{\Lambda}_G(x_0)| \ge 2$. Since $\deg_G(x_0)=3$, by the Pigeonhole Principle, $|X_1| \le |\mathsf{\Lambda}_G(x_0)| = 3$, and $|X_2| \ge |X| - 4 > 0$. We claim that $X_1 \ne \emptyset$. Otherwise, $|\mathsf{\Lambda}^2_G(X)| \le |\mathsf{\Lambda}_G(x_0)| + \lfloor \frac {|X|-1}{2} \rfloor$ since every vertex in $\mathsf{\Lambda}^2_G(X)\setminus \mathsf{\Lambda}_G(x_0)$ has at least two neighbors in $X \setminus \{x_0\}$. Solving the inequality $3 +\lfloor \frac {|X|-1} {2}\rfloor \ge |X|$, we get $|X| \le 5$, a contradiction. 

Let $x_1 \in X_1$. For any vertex $x_2 \in X \setminus \{x_0, x_1\}$, since $G$ contains a cycle $C$ with $V(C)\cap X=\{x_0, x_1, x_2\}$ and $|\mathsf{\Lambda}_G(x_0)\cap \mathsf{\Lambda}_G(x_1)| = 1$, it follows that $\mathsf{\Lambda}_G(x_2)\cap (\mathsf{\Lambda}_G(x_1)\backslash \mathsf{\Lambda}_G(x_0)) \ne \emptyset$. It also implies that $\mathsf{\Lambda}_G(X_2)\subseteq \mathsf{\Lambda}_G(x_0)\cup \mathsf{\Lambda}_G(x_1)$, and thus $\mathsf{\Lambda}_G(X_2\cup \{x_0, x_1\}) = \mathsf{\Lambda}_G(x_0)\cup \mathsf{\Lambda}_G(x_1)$. Since $ |\mathsf{\Lambda}_G(X_2\cup \{x_0, x_1\})| \ge |\mathsf{\Lambda}_G^2(X_2\cup \{x_0, x_1\})| \ge |X_2| + 2$ and $|\mathsf{\Lambda}_G(x_0)\cup \mathsf{\Lambda}_G(x_1)| = 5$, we know $|X_2| \le 3$. Consequently, the only possible case is $|X| = 7$, $|X_1| = 3$, and $|X_2| = 3$. However, since $G$ contains a cycle $C$ with $V(C)\cap X=X_1$, two vertices in $X_1\backslash\{x_1\}$ must share a common neighbor outside of $\mathsf{\Lambda}_G(x_0)\cup \mathsf{\Lambda}_G(x_1)$. Hence $|\mathsf{\Lambda}^2_G(X)| \le |\mathsf{\Lambda}_G(X)| \le |\mathsf{\Lambda}_G(x_0)\cup \mathsf{\Lambda}_G(x_1)| + 1 = 6 < |X|$. This contradicts the fact that $G$ is snp.
\end{proof}

In the proof of Theorem~\ref{thm:cycle-for-7}, we will frequently use the following observations.

\begin{fact}\label{fact:6_cycle}
Let $G=(X, Y)$ be a critical bigraph. For any three vertices $x_0, x_1, x_2\in X$, there is a cycle $C$ such that $V(C)\cap X=\{x_0, x_1, x_2\}$. Therefore, 
\[
\mathsf{\Lambda}(x_0)\cap \mathsf{\Lambda}(x_1) \ne \emptyset \ne \mathsf{\Lambda}(x_0)\cap \mathsf{\Lambda}(x_2)\ \mbox{ and } |(\mathsf{\Lambda}(x_2) \cup \mathsf{\Lambda}(x_1)) \cap \mathsf{\Lambda}(x_0)| \ge 2.
\]
\end{fact}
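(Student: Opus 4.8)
The plan is to read the cycle straight off condition~(3) in the definition of a \emph{critical} bigraph, and then to extract the two neighborhood statements from the observation that a cycle in a bipartite graph meeting $X$ in exactly three vertices must be a $6$-cycle. So the whole argument is essentially an unwinding of definitions; the only thing that needs a word of justification is that condition~(3) actually applies.

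First I would check that $\{x_0,x_1,x_2\}$ (which we take to consist of three distinct vertices) is a \emph{proper} subset of $X$. This is where Theorem~\ref{thm:KLLZ21-a} is used: if $|X|\le 6$, then $G$, being snp, would have a cycle covering all of $X$; combined with condition~(3)---which says $G[X'\cup Y]$ is supercyclic for every proper $X'\subset X$ with $|X'|\ge 3$---this would make $G$ itself supercyclic, contradicting condition~(1). Hence every critical bigraph has $|X|\ge 7$, so in particular $\{x_0,x_1,x_2\}\subsetneq X$ and $|\{x_0,x_1,x_2\}|=3\ge 3$.

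Now condition~(3) gives that $G[\{x_0,x_1,x_2\}\cup Y]$ is supercyclic. Applying the definition of supercyclicity to the only eligible vertex set, namely $X'=\{x_0,x_1,x_2\}$, produces a cycle $C$ inside $G[\{x_0,x_1,x_2\}\cup Y]$ with $V(C)\cap\{x_0,x_1,x_2\}=\{x_0,x_1,x_2\}$. Since that induced subgraph contains no vertex of $X$ other than $x_0,x_1,x_2$, we get $V(C)\cap X=\{x_0,x_1,x_2\}$, which is the first assertion.

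For the ``therefore'' part: $G$ is bipartite, so $C$ alternates between $X$ and $Y$; having exactly three vertices in $X$, it is a $6$-cycle, say $x_0\,y_{01}\,x_1\,y_{12}\,x_2\,y_{20}\,x_0$ with $y_{01},y_{12},y_{20}\in Y$ pairwise distinct. Reading off the edges of $C$ gives $y_{01}\in N(x_0)\cap N(x_1)$ and $y_{20}\in N(x_0)\cap N(x_2)$, so both intersections are nonempty; and $y_{01},y_{20}$ are two distinct vertices of $N(x_0)\cap\bigl(N(x_1)\cup N(x_2)\bigr)$, so $|(N(x_2)\cup N(x_1))\cap N(x_0)|\ge 2$. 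There is no real obstacle in this proof---the one subtlety is the applicability of condition~(3), which is why Theorem~\ref{thm:KLLZ21-a} (equivalently, the fact that no critical bigraph has $|X|\le 6$) must be invoked at the start.
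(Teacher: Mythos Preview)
Your proof is correct and matches the paper's intended argument. The paper states Fact~\ref{fact:6_cycle} without proof, as an immediate observation; your write-up makes explicit exactly what the authors leave implicit: condition~(3) of criticality furnishes the required $6$-cycle once $\{x_0,x_1,x_2\}$ is a proper subset of $X$, and the two neighborhood claims are then read off the cycle. Your appeal to Theorem~\ref{thm:KLLZ21-a} to force $|X|\ge 7$ (hence $\{x_0,x_1,x_2\}\subsetneq X$) is the one point the paper takes for granted---it is working in the context of $|X|=7$ anyway---but it is the right way to make the statement self-contained for arbitrary critical bigraphs.
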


For convenience, we assign a clockwise orientation to a cycle $C$ in a graph $G$. For each vertex $a \in V(C)$, we denote its successor by $a^+$ and its predecessor by $a^-$ along the cycle. Similarly, we denote the second successor by $a^{++}$ and the second predecessor by $a^{--}$. For any subset $A \subseteq V(C)$, set $A^+ =\{a^+\, : \, a\in A\}$ and $A^{++} = \{a^{++}\, : \, a\in A\}$; the sets $A^-$ and $A^{--}$ are defined analogously. Given two distinct vertices $a, b \in V(C)$, an {\em $(a, b)$-bridge} in $G$ is defined as either a chord $e\notin E(C)$ together with its endpoints $a,b$ or a non-trivial $a-b$ path $P$ such that $V(P)\cap V(C)=\{a,b\}$.

\begin{lemma}\label{lem-longcycle}
Let $C$ be a longest cycle in a graph $G$ with $V(C) \ne V(G)$, and let $D$ be a component of $G\setminus V(C)$. Then for any two distinct vertices $x, y\in (\mathsf{\Lambda}_C(D))^+$ (or, respectively, $x, y \in (\mathsf{\Lambda}_C(D))^-$), they are not consecutive on $C$, and no $(x, y)$-bridge exists in $G\setminus V(D)$. Consequently, $(\mathsf{\Lambda}_C(D))^+$ and $(\mathsf{\Lambda}_C(D))^-$ are independent sets. 
Moreover, if $w_1$ and $w_2$ are two distinct vertices that form a $w_1-w_2$ path $P$ in $D$, and $x \in (\mathsf{\Lambda}_C(w_1))^{++}, y\in (\mathsf{\Lambda}_C(w_2))^{+}$ (or, respectively, $x \in (\mathsf{\Lambda}_C(w_1))^{--}, y\in (\mathsf{\Lambda}_C(w_2))^{-}$), then $x \ne y$, and no $(x, y)$-bridge exists in $G\setminus V(P)$. In addition, $x$ and $y$ can be consecutive on $C$ if and only if $x^{--} = y^-$ (or, respectively, $x^{++} = y^+$).
\end{lemma}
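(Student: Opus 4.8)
The plan is to use the standard longest-cycle rotation/exchange argument: if two "shifted" attachment vertices violated one of the stated conditions, we would splice in a detour through $D$ (or through the path $P$) to produce a cycle strictly longer than $C$, contradicting maximality. I would organize the proof around a single technical sublemma that I will apply repeatedly. Fix the clockwise orientation of $C$. The key observation is: if $u_1, u_2 \in N_C(D)$ are distinct, then there is a $u_1$–$u_2$ path $Q$ with interior in $D$ (take a path in $D \cup \{u_1,u_2\}$; $D$ is connected), and $Q$ together with one of the two arcs of $C$ between $u_1$ and $u_2$ is a cycle; this cycle is longer than $C$ unless the complementary arc is short. Making this precise for the various "shift amounts" ($+$, $++$, and the mixed case with a $w_1$–$w_2$ path) is the bulk of the work.

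For the first assertion, let $x, y \in (N_C(D))^+$, say $x = u_1^+$ and $y = u_2^+$ with $u_1, u_2 \in N_C(D)$ distinct. If $x$ and $y$ were consecutive, WLOG $y = x^+$, then $u_2 = x = u_1^+$, and replacing the edge $u_1 x$ on $C$ by a $u_1$–$x$ path through $D$ (which exists and has length $\ge 2$ since $u_1, x \in N_C(D)$ and $D \ne \emptyset$) yields a longer cycle — contradiction. If there were an $(x,y)$-bridge $B$ in $G \setminus V(D)$, consider the cycle $C' = C - u_1 x - u_2 y + (\text{path through } D \text{ from } u_1 \text{ to } u_2) + B$; removing the two edges $u_1 x$ and $u_2 y$ splits $C$ into two arcs, one of which is reconnected by $B$ and the other by the $D$-path, and since the $D$-path has length $\ge 3$ (it enters and leaves $D$) while we deleted only $2$ edges, $|C'| > |C|$ — contradiction. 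Taking $B$ to be the single edge $xy$ shows $(N_C(D))^+$ is independent; the $(N_C(D))^-$ statements are symmetric under reversing orientation.

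For the "moreover" part, let $P$ be a $w_1$–$w_2$ path in $D$, and suppose $x = u_1^{++} \in (N_C(w_1))^{++}$, $y = u_2^+ \in (N_C(w_2))^+$ with $u_1 \in N_C(w_1)$, $u_2 \in N_C(w_2)$. Here the detour is: delete $u_1 u_1^+ u_1^{++} = u_1 u_1^+ x$ — that is, delete the two edges $u_1 u_1^+$ and $u_1^+ x$, discarding the vertex $u_1^+$ — and also delete $u_2 y$; reconnect $u_1$ to $w_1$, traverse $P$ to $w_2$, go to $u_2$, and close up. Counting: we removed $3$ edges (and one vertex $u_1^+$); the inserted path $u_1 w_1 P w_2 u_2$ has length $\ge 4$ (at least $u_1 w_1$, one edge of $P$ if $w_1 \ne w_2$ — or handle $w_1 = w_2$ separately where the path is $u_1 w_1 u_2$ of length $2$ but then only $2$ vertices of $D$ are used and the count still works because... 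I'd check this boundary case explicitly), so if $x \ne y$ the result is a cycle at least as long, and a strict inequality once one accounts for whether $x$ and $y$ coincide or are consecutive. This is exactly why the conclusion is the conditional "$x$ and $y$ can be consecutive on $C$ \emph{if and only if} $x^{--} = y^-$": when $x^{--} = y^-$ we have $u_1^+ = u_2$, so deleting $u_1^+$ already removes the edge $u_2 y$ and the edge counts change by one. Similarly, an $(x,y)$-bridge in $G \setminus V(P)$ would let us reconnect the other arc and again overshoot $|C|$. I would present this by carefully listing, in each case, which edges of $C$ are deleted and which path is inserted, then comparing lengths; the orientation-reversed statement handles $(N_C(w_1))^{--}, (N_C(w_2))^-$.

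The main obstacle is bookkeeping in the mixed case: one must be careful about degenerate configurations — when $w_1 = w_2$, when $u_1 = u_2$, when the two deleted "blocks" of $C$ overlap (precisely the case $x^{--} = y^-$), and when $x$ and $y$ are close enough on $C$ that the two arcs we are manipulating are not disjoint. Each of these either yields the claimed exceptional equality or collapses to an instance of the first (pure-$+$) assertion, so the proof is a finite case check; the risk is missing a degenerate overlap, so I would enumerate the positions of $u_1, u_1^+, x, u_2, y$ on $C$ systematically before writing the length comparisons.
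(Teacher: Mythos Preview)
The paper states this lemma without proof; it is treated as a standard longest-cycle fact, so there is no argument in the paper to compare against. Your rotation/splicing strategy is exactly the standard one and is essentially correct. Two small corrections are worth making before you write it up carefully. First, the $u_1$--$u_2$ path through $D$ has length at least $2$, not $3$ (a single vertex of $D$ adjacent to both $u_1$ and $u_2$ suffices); the count still works because you also insert the bridge $B$ of length at least $1$, giving $|C|-2+2+1>|C|$. Second, in the ``moreover'' part the exceptional condition $x^{--}=y^{-}$ translates to $u_1=u_2$ (since $x^{--}=u_1$ and $y^{-}=u_2$), not to $u_1^{+}=u_2$ as you wrote; in fact $u_1^{+}=u_2$ would force $y=u_2^{+}=u_1^{++}=x$, which is already excluded. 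With $u_1=u_2$ one has $y=u_1^{+}$ and $x=u_1^{++}=y^{+}$, so $x$ and $y$ are consecutive and no contradiction is available --- this is precisely the allowed case. Finally, the hypothesis explicitly requires $w_1\ne w_2$, so you can drop that boundary case from your list. With these adjustments your case analysis goes through.
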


\subsection {Proof of Theorem~\ref{thm:cycle-for-7}}
Assume to the contrary that $G=(X,Y)$ is an snp bigraph with $|X| = 7$ that is not supercyclic. By Theorem~\ref{thm:KLLZ21-a}, we may assume that $G$ is saturated critical and snp-minimal. 
Let $X = \{x_1, \dots, x_7\}$, where $x_7$ is of maximum degree in $X$. By Lemma~\ref{lemma:no-degree-2}, $\deg (x_7)\geq 4$. Let $C=x_1y_1\cdots x_6y_6x_1$ be a cycle satisfying $V(C)\cap X= X\backslash\{x_7\}$, and let $D$ be the component of $G - V(C)$ containing $x_7$. By Lemma~\ref{lem-from-KLLZ-v} (i) and (v), $|\mathsf{\Lambda}_C(x_7)| \in \{2, 3, 4\}$. Without loss of generality, assume $y_1 \in \mathsf{\Lambda}(x_7)$. We consider three cases according to the size of $\mathsf{\Lambda}_C(x_7)$.

\textbf{Case 1.} $|\mathsf{\Lambda}_C(x_7)| = 2$.
 
Given that $\deg(x_7) \ge 4$ and the snp-minimality of $G$, $V(D) \backslash \{x_7\}$ must include at least two vertices in $Y$, each having at least two neighbors in $X$. Among them, by Lemmas~\ref{lem-from-KLLZ-iv} (iv) and~\ref{lem-longcycle}, all but at most two vertices, say $y,y'\in V(D)$, have at least two neighbors in $X\cap V(C)$, while for these two vertices, we can only guarantee that $\mathsf{\Lambda}_C(y) \neq \emptyset \ne \mathsf{\Lambda}_C(y')$, and $|(\mathsf{\Lambda}_C(y) \cup \mathsf{\Lambda}_C(y'))\cap X| \ge 2$. By symmetry, it suffices to consider when $\mathsf{\Lambda}_C(x_7) =\{y_1, y_2\}, \{y_1, y_3\}$, or $\{y_1, y_4\}$. If $\mathsf{\Lambda}_C(x_7) = \{y_1, y_3 \}$, then at most one vertex in $V(D)\setminus \{x_7\}$ can satisfy the above properties; otherwise, Lemma~\ref{lem-longcycle} would guarantee a cycle covering $X$.

\textbf{Case 1a.} \( \mathsf{\Lambda}_C(x_7) = \{ y_1, y_2 \}\).

By Lemma~\ref{lem-longcycle}, all vertices in $V(D)\setminus\{x_7\}$ can only be adjacent to $x_4, x_5, x_6$ in $C$. Note that $x_5\notin \mathsf{\Lambda}(V(D)\setminus\{x_7\})$, since otherwise, Lemma~\ref{lem-longcycle} would imply that all vertices in $V(D)\setminus \{x_7\}$ are adjacent only to $x_5$ in $C$, leading to a contradiction. Without loss of generality, assume $x_4 \in \mathsf{\Lambda}(y)$, $x_6 \in \mathsf{\Lambda}(y')$, and $\mathsf{\Lambda}(y^*)\supseteq \{x_4, x_6\}$ for any $y^* \in V(D)\setminus\{x_7, y, y'\}$. 
Since $x_2\in (\mathsf{\Lambda}_C(x_7))^+\cap (\mathsf{\Lambda}_C(x_7))^-$, we have $\mathsf{\Lambda}(x_2)\cap (\{y_5, y_6\}\cup V(D))= \emptyset$. Similarly, $\mathsf{\Lambda}(x_3)\cap (\{y_6\}\cup V(D)) = \emptyset$. Also, as $y_5\in (\mathsf{\Lambda}_C(y'))^-$ and $x_3\in (\mathsf{\Lambda}_C(y))^{--}$, $y_5\notin \mathsf{\Lambda}(x_3)$. Thus $(\mathsf{\Lambda}(x_2)\cup \mathsf{\Lambda}(x_3)) \cap (\{y_5, y_6\} \cup V(D)) = \emptyset$. Then from $|(\mathsf{\Lambda}(x_2) \cup \mathsf{\Lambda}(x_3)) \cap \mathsf{\Lambda}(x_6)| \ge 2$, we have $\deg(x_6) > \deg(x_7)$, contradicting the choice of $x_7$. 

\textbf{Case 1b.} $\mathsf{\Lambda}_C(x_7)=\{y_1, y_4\}$.

All vertices in $V(D)\setminus\{x_7\}$ can only be adjacent to $x_3, x_6$ in $C$.
Without loss of generality, we may assume $x_3 \in \mathsf{\Lambda}(y)$, $x_6 \in \mathsf{\Lambda}(y')$ and $\{x_3, x_6\} \subseteq \mathsf{\Lambda}(y^*)$ for any $y^* \in V(D)\setminus\{x_7, y, y'\}$. 
Since $x_2 \in (\mathsf{\Lambda}_C(x_7))^+ \cap (\mathsf{\Lambda}_C(y))^{--}$ and $x_4 \in (\mathsf{\Lambda}_C(x_7))^- \cap (\mathsf{\Lambda}_C(y))^{++}$, we have $(\mathsf{\Lambda}(x_2) \cup \mathsf{\Lambda}(x_4))\cap (\{y_5, y_6\}\cup V(D)) = \emptyset$. Again, because $|(\mathsf{\Lambda}(x_2) \cup \mathsf{\Lambda}(x_4)) \cap \mathsf{\Lambda}(x_6)| \ge 2$, we must have $\deg(x_6) > \deg(x_7)$, contradicting the choice of $x_7$.
            
\textbf{Case 2.} $|\mathsf{\Lambda}_C(x_7)| = 3$.

By symmetry, it suffices to consider the cases when $\mathsf{\Lambda}_C(x_7) = \{y_1, y_2, y_3\}, \{y_1, y_2, y_4\}$, or $\{y_1, y_3, y_5\}$. The snp-minimality of $G$, together with Lemmas~\ref{lem-from-KLLZ-iv} (iv) and~\ref{lem-longcycle}, ensures that there is at most one vertex $y$ in $V(D)\setminus \{x_7\}$, and if such a vertex exists, then $\mathsf{\Lambda}_C(y) \ne \emptyset$. Indeed, if there were two distinct vertices $y, y' \in V(D)\setminus \{x_7\}$, the snp-minimality and Lemma~\ref{lem-from-KLLZ-iv} (iv) would guarantee that $\mathsf{\Lambda}_C(y) \ne \emptyset \ne \mathsf{\Lambda}_C(y')$ and $|\mathsf{\Lambda}_C(y) \cup \mathsf{\Lambda}_C(y')| \ge 2$. Also, in the latter two cases, at least five vertices of $X \cap V(C)$ must lie in $(\mathsf{\Lambda}_C(x_7))^+ \cup (\mathsf{\Lambda}_C(x_7))^-$. However, since $|V(C)\cap X|=6<2+5$, it follows that $(\mathsf{\Lambda}_C(y) \cup \mathsf{\Lambda}_C(y')) \cap ((\mathsf{\Lambda}_C(x_7))^+ \cup (\mathsf{\Lambda}_C(x_7))^-) \ne \emptyset$, which violates Lemma~\ref{lem-longcycle}. In the remaining case when $\mathsf{\Lambda}_C(x_7) = \{y_1, y_2, y_3\}$, we must have $\mathsf{\Lambda}_C(y) \cup \mathsf{\Lambda}_C(y') =\{x_5, x_6\}$, which again contradicts Lemma~\ref{lem-longcycle}. 

Furthermore, note that if $\mathsf{\Lambda}_C(x_7) = \{y_1, y_3, y_5\}$, no such vertex $y$ can exist, as $y$ would otherwise have a neighbor in $V(C)$, leading to a cycle covering $X$. Thus $\deg(x_7) = 3$, contradicting the choice of $x_7$. Therefore, it remains to consider the following two cases.

\textbf{Case 2a.} $\mathsf{\Lambda}_C(x_7) = \{y_1, y_2, y_3\}$.
 
Given that $\deg(x_7) \ge 4$, there is exactly one vertex $y$ in $V(D)\setminus\{x_7\}$, and by Lemma~\ref{lem-longcycle}, $y$ can only be adjacent to $x_5, x_6$ in $C$. Assume $x_5 \in \mathsf{\Lambda}_C(y)$. Since $x_2, x_3 \in (\mathsf{\Lambda}_C(x_7))^+ \cap (\mathsf{\Lambda}_C(x_7))^-$, we have $(\mathsf{\Lambda}(x_2) \cup \mathsf{\Lambda}(x_3)) \cap \{y_4, y_5, y\} = \emptyset$. However, since $|(\mathsf{\Lambda}(x_2) \cup \mathsf{\Lambda}(x_3)) \cap \mathsf{\Lambda}(x_5)| \ge 2$, we must have $\deg(x_5) \ge 5 > \deg(x_7)$, contradicting the choice of $x_7$. So $x_5 \notin \mathsf{\Lambda}_C(y)$. Similarly, $x_6 \notin \mathsf{\Lambda}_C(y)$, and so $\mathsf{\Lambda}_C(y) = \emptyset$, a contradiction.
        
\textbf{Case 2b.} \(\mathsf{\Lambda}_C(x_7) = \{y_1, y_2, y_4\}\).

In this case, the unique vertex $y \in V(D)\setminus\{x_7\}$ satisfies $\mathsf{\Lambda}_C(y) = \{x_6\}$. 
Since $x_2 \in (\mathsf{\Lambda}_C(x_7))^+ \cap (\mathsf{\Lambda}_C(x_7))^-$, we have $\mathsf{\Lambda}(x_2) \cap \{y, y_5, y_6\} = \emptyset$. Also, $x_3 \in (\mathsf{\Lambda}_C(x_7))^+$ and $x_4 \in (\mathsf{\Lambda}_C(x_7))^-$, implying that $\mathsf{\Lambda}(x_3) \cap \{y, y_6\} = \mathsf{\Lambda}(x_4) \cap \{y, y_5\} = \emptyset$. Note that $\deg(x_6) \le \deg(x_7) = 4$. Then by Fact~\ref{fact:6_cycle} applied to $\{x_2, x_3, x_6\}$ and to $\{x_2, x_4, x_6\}$, $x_2$ and $x_6$ share a unique neighbor outside of $\{y, y_5, y_6\}$, and $x_3y_5, x_4y_6 \in E(G)$. 
We now consider each possible vertex in $\mathsf{\Lambda}(x_2) \cap \mathsf{\Lambda}(x_6)$ and show that, in each scenario, a contradiction arises, thereby finishing this case.

If $\mathsf{\Lambda}(x_2) \cap \mathsf{\Lambda}(x_6) = \{y_2\},\{y_3\}$, or $\{y'\}$ where $y' \notin V(C)\cup \{y\}$, then a $14$-cycle must exist in $G$ and give a contradiction: $x_7yx_6y_2x_2y_1x_1y_6x_4y_3x_3y_5x_5y_4x_7$, $x_7y_2x_3y_3x_2y_1x_1y_6x_4y_4x_5y_5x_6yx_7$, or $x_7yx_6y'x_2y_1x_1y_6x_4y_4x_5y_5x_3y_2x_7$ respectively.
If $\mathsf{\Lambda}(x_2) \cap \mathsf{\Lambda}(x_6) = \{y_4\}$, then $\deg(y_4) \ge 5$, which implies $\deg(y_4) = |X| = 7$ by Lemma~\ref{lem-from-KLLZ-iii} (iii). Then $G$ has a $14$-cycle $x_7yx_6y_6x_4y_3x_3y_5x_5y_4x_1y_1x_2y_2x_7$, a contradiction.
If $\mathsf{\Lambda}(x_2) \cap \mathsf{\Lambda}(x_6) = \{y_1\}$, then $\mathsf{\Lambda}(x_6) = \{y_1, y_5, y_6, y\}$. Note that $x_5 \in (\mathsf{\Lambda}_C(x_7))^+$ and thus $\mathsf{\Lambda}(x_5) \cap \{y, y_6\} = \emptyset$. Since $|(\mathsf{\Lambda}(x_3) \cup \mathsf{\Lambda}(x_5)) \cap \mathsf{\Lambda}(x_6)| \ge 2$, we have $y_1 \in \mathsf{\Lambda}(x_3)\cup \mathsf{\Lambda}(x_5)$ which means $\deg(y_1) \ge 5$ and so $\deg(y_1) = 7$. Additionally, $|(\mathsf{\Lambda}(x_1) \cup \mathsf{\Lambda}(x_6)) \cap \mathsf{\Lambda}(x_2)| \ge 2$, implying the existence of a vertex $y' \in Y\setminus\{y_1, y, y_5, y_6\}$ such that $y' \in \mathsf{\Lambda}(x_1) \cap \mathsf{\Lambda}(x_2)$. If $y' \notin V(C) \cup \{y\}$, then $y'$ can be substituted for $y_1$ in $V(C)$, making $y_1$ a vertex in $V(D)$ that is adjacent to all vertices in $X$, which contradicts Lemma~\ref{lem-longcycle}. Furthermore, if $y' = y_2$, then a $14$-cycle exists in $G$:  $x_7yx_6y_5x_3y_3x_4y_6x_1y_2x_2y_1x_5y_4x_7$. Similarly, if $y' = y_3$, there exists another $14$-cycle in $G$: $x_7yx_6y_5x_3y_2x_2y_3x_4y_6x_1y_1x_5y_4x_7$. Finally, if $y' = y_4$, then $\deg(y_4) \ge 5$, and thus $\deg(y_4) = 7$, which implies that $\deg(x_6) > \deg(x_7)$, leading to a contradiction again. Thus no such $y'$ can exist, a contradiction.

For the remainder of the proof, we will frequently rely on the following observation. 
\begin{lemma}\label{lem-bridge-pro}
Let $C$ be the longest cycle in a graph $G$ with $V(C) \ne V(G)$ and a fixed clockwise orientation. Given $x, y, z \in V(C)$, let $P_1$ and $P_2$ be two internally disjoint bridges, specifically an $(x, y)$-bridge and an $(x^+, z)$-bridge, with at least one of these bridges having length at least $2$. Assume that $z \notin \{x, x^+, x^{++}\}$. Let $C^+[x^{++}, z^-]$ denote the consecutive vertices on $C$ from $x^{++}$ to $z^-$ in the clockwise direction. If $y \in C^+[x^{++}, z^-]$, then there exists no $(y^-, z^+)$-bridge, $(y^-, z^-)$-bridge, or $(y^+, z^+)$-bridge that is internally disjoint from both $P_1$ and $P_2$. In addition, if $y = z$, then no $(y^-, z^+)$-bridge exists that is internally disjoint from $P_1$ and $P_2$. 
\end{lemma}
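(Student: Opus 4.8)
The plan is to argue by contradiction from the maximality of $C$. Suppose a bridge $P_3$ of one of the listed forms existed and were internally disjoint from both $P_1$ and $P_2$; then $P_1=xy$, $P_2=x^+z$ and $P_3$ would be three pairwise internally disjoint bridges, each also internally disjoint from $C$. I would splice them together with a carefully chosen family of arcs of $C$ into a cycle $C'$ of $G$ that contains every vertex of $C$ together with at least one further vertex, contradicting the maximality of $C$.

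First I would fix the clockwise orientation of $C$ and locate $x,x^+,y^-,y,z^-,z,z^+$ in the cyclic order. The hypotheses $z\notin\{x,x^+,x^{++}\}$ and $y\in C^+[x^{++},z^-]$ — equivalently $x^{++}\preceq y\preceq z^-$ reading clockwise from $x$, which in particular forces $y\neq z$ — guarantee that these vertices appear, reading clockwise, in the order $x,x^+,y^-,y,z^-,z,z^+$, and that the only coincidences possible among them are $y^-=x^+$ (when $y=x^{++}$) and $z^+=x$ (when $z=x^-$); here $z\neq x^{++}$ is precisely what excludes the bad coincidence $z^-=x^+$. In the separate clause $y=z$ — for which the hypothesis $y\in C^+[x^{++},z^-]$ no longer applies, which is why it is stated on its own — I would instead use $z=y\notin\{x,x^+,x^{++}\}$ to place $x,x^+,y^-,y,y^+$ in clockwise order, noting that now $P_1$ and $P_2$ share the endpoint $y$.

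Next, for each admissible form of $P_3$ I would write down an explicit routing. For a $(y^-,z^+)$-bridge $P_3$, set
\[
    C' :\quad x \xrightarrow{\,P_1\,} y \xrightarrow{\,C\,} z \xrightarrow{\,P_2\,} x^+ \xrightarrow{\,C\,} y^- \xrightarrow{\,P_3\,} z^+ \xrightarrow{\,C\,} x,
\]
where $\xrightarrow{C}$ denotes the clockwise arc of $C$ between the indicated endpoints; the three arcs used are $C^+[y,z]$, $C^+[x^+,y^-]$ and $C^+[z^+,x]$, which are pairwise internally disjoint and jointly contain all of $V(C)$. For a $(y^-,z^-)$-bridge take $x\xrightarrow{P_1}y\xrightarrow{C}z^-\xrightarrow{P_3}y^-\xrightarrow{C^{-1}}x^+\xrightarrow{P_2}z\xrightarrow{C}x$ (with $\xrightarrow{C^{-1}}$ the counterclockwise arc), for a $(y^+,z^+)$-bridge take $x\xrightarrow{P_1}y\xrightarrow{C^{-1}}x^+\xrightarrow{P_2}z\xrightarrow{C^{-1}}y^+\xrightarrow{P_3}z^+\xrightarrow{C}x$, and in the case $y=z$ take $x\xrightarrow{P_1}y\xrightarrow{P_2}x^+\xrightarrow{C}y^-\xrightarrow{P_3}y^+\xrightarrow{C}x$. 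In every case one checks directly that the chosen arcs, together with $P_1,P_2,P_3$, form a single simple cycle $C'$ of $G$ with $V(C)\subseteq V(C')$: the arcs lie on $C$ and each bridge meets $C$ only at its two endpoints, so no vertex is repeated. Since at least one of $P_1,P_2$ has length $\ge 2$, that bridge has an internal vertex, which lies in $V(G)\setminus V(C)$ but on $C'$; hence $|V(C')|\ge|V(C)|+1$, contradicting the maximality of $C$. (If all three bridges were chords, $C'$ would merely have length $|V(C)|$, so the length-$\ge 2$ hypothesis is genuinely needed.)

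The main obstacle is purely organizational: one must verify each of the four routings above and handle the degenerate configurations $y^-=x^+$, $z^+=x$, $y=z^-$, and $y^+=z$ — the last two arising because a ``bridge'' joining two vertices adjacent on $C$ may still be a nontrivial path off $C$ — in each of which some chosen arc collapses to a single vertex or a single edge and two of the bridges may share an endpoint, although the same routings (read with the degenerate arcs omitted) still produce a simple cycle through all of $V(C)$. Conceptually this is exactly the ``reroute a longest cycle through interleaved bridges'' device already used for Lemma~\ref{lem-longcycle}, so once the cyclic placement of $x,x^+,y^-,y,z^-,z,z^+$ is pinned down there is no real difficulty beyond the case-checking.
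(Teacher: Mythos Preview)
The paper states this lemma without proof, so there is nothing to compare your argument against directly. Your approach---assume the forbidden bridge $P_3$ exists, then splice $P_1,P_2,P_3$ together with the appropriate arcs of $C$ to obtain a cycle $C'\supseteq V(C)$ that picks up the internal vertex of whichever of $P_1,P_2$ has length $\ge 2$---is exactly the intended ``reroute a longest cycle through interleaved bridges'' device, and your four explicit routings are correct. The degenerate cases you flag ($y^-=x^+$, $z^+=x$, $y=z^-$, $y^+=z$) are handled by the same routings with the corresponding arc collapsing to a point, and the shared endpoints that then arise between two of the bridges cause no repetition because the bridges are pairwise internally disjoint. This is a complete and standard proof of a lemma the paper leaves to the reader.
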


\textbf{Case 3.} $|\mathsf{\Lambda}_C(x_7)| = 4$.

By symmetry, we may assume that $\mathsf{\Lambda}_C(x_7) = \{y_1, y_2, y_3, y_4\}$, $\{y_1, y_2, y_3, y_5\}$, or $\{y_1, y_2, y_4, y_5\}$. Also, to satisfy $|\mathsf{\Lambda}^2(X)| \ge |X|$, there must be a vertex $y \in Y\backslash
V(C)$ such that $\deg (y)\geq 2$.

\textbf{Case 3a.} \(\mathsf{\Lambda}_C(x_7) = \{y_1, y_2, y_3, y_4\}\).
            
By Lemma~\ref{lem-longcycle}, $y$ must satisfy one of the following configurations:
$\mathsf{\Lambda}(y) = \{x_1, x_5, x_6\},\{x_1, x_5\}$ or $\{x_i, x_6\}$ where $i\neq 6$.
Notice that if $\mathsf{\Lambda}(y) = \{x_1, x_6\}$, then by Lemma~\ref{lem-from-KLLZ-iv} (iv), at least one vertex in $X\setminus \{x_1, x_6\}$ must be adjacent to $y_6$. This configuration is effectively equivalent to $\mathsf{\Lambda}(y) = \{x_1, x_5, x_6\}$, as $y$ can be substituted into the cycle $C$ for $y_6$, and then $y_6$ becomes the vertex outside $C$ with $\{x_1, x_6\} \subset \mathsf{\Lambda}(y_6)$. Similarly, the configuration $\mathsf{\Lambda}(y) = \{x_5,x_6\}$ is also equivalent to $\mathsf{\Lambda}(y) = \{x_1, x_5, x_6\}$. Additionally, if $\mathsf{\Lambda}(y) = \{x_6, x_7\}$, then $\deg(y_5) = \deg(y_6) = \deg(y) = 2$ by Lemma~\ref{lem-longcycle}. Since $|\mathsf{\Lambda}^2(X\setminus\{x_5, x_6\})| \ge |X\setminus\{x_5, x_6\}| = 5$, there exists at least one vertex $y' \notin V(C)\cup \{y\}$ such that $y'$ is adjacent to at least two vertices in $X\setminus\{x_5, x_6\}$, as $y_5, y_6, y \notin \mathsf{\Lambda}^2(X\setminus\{x_5, x_6\})$. By Lemma~\ref{lem-longcycle}, we know such a $y'$ does not exist, implying that $\mathsf{\Lambda}(y) \ne \{x_6, x_7\}$. It also implies that $\mathsf{\Lambda}(x_7)= \mathsf{\Lambda}_C(x_7)= \{y_1, y_2, y_3, y_4\}$. Now we consider the remaining possibilities for $y$: either $\mathsf{\Lambda}(y) = \{x_i, x_6\}$ for some $x_i \in \{x_2, x_3, x_4\}$, $\mathsf{\Lambda}(y) = \{x_1, x_5\}$, or $\mathsf{\Lambda}(y) = \{x_1, x_5, x_6\}$. In each scenario, we show that a contradiction arises, thereby concluding this case.

Suppose $\mathsf{\Lambda}(y) = \{x_1, x_5\}$. Since $|\mathsf{\Lambda}^2(X\setminus \{x_1, x_5\})| \ge |X\setminus \{x_1, x_5\}| = 5$, there is a vertex $y'\in (Y\setminus(V(C)\cup\{y\})) \cup \{y_5, y_6\}$ such that $y'$ is adjacent to at least two vertices in $X\setminus\{x_1, x_5, x_7\}$. If $y' \notin V(C)\cup\{y\}$, then $y'$ must satisfy $\mathsf{\Lambda}(y') =\{x_i, x_6\}$ for some $x_i \in \{x_2, x_3, x_4\}$. However, in all cases, we get a $14$-cycle: $x_7y_1x_2y'x_6y_6x_1yx_5y_4x_4y_3x_3y_2x_7$, $x_7y_2x_2y_1x_1yx_5y_5x_6y'x_3y_3x_4y_4x_7$, or $x_7y_4x_4y'x_6y_5x_5yx_1y_1x_2y_2x_3y_3x_7$, respectively. If $y' = y_5$, then $|\mathsf{\Lambda}(y_5) \cap \{x_2, x_3, x_4\}| \ge 1$, contradicting Lemma~\ref{lem-bridge-pro}. To see this, assume $y_5x_i \in E(G)$ for some $i \in \{2, 3, 4\}$. Then the vertices $x_i, y_5$ and $y_1$ correspond to $x, y$, and $z$ in Lemma~\ref{lem-bridge-pro}, respectively. This configuration satisfies the conditions of the lemma and leads to a contradiction, as a $(y^-, z^-)$-bridge (the $(x_5, x_1)$-bridge) exists. Similarly, if $y' = y_6$, then $|\mathsf{\Lambda}(y_6) \cap \{x_2, x_3, x_4\}| \ge 1$, again contradicting Lemma~\ref{lem-bridge-pro}. Assume $y_6x_i \in E(G)$ for some $i \in \{2, 3, 4\}$. Here, the vertices $y_{i-1}, y_4$ and $y_6$ correspond to $x, y$, and $z$ in Lemma~\ref{lem-bridge-pro}, respectively. This configuration meets the conditions of the lemma, and the existence of a $(y^+, z^+)$-bridge (the $(x_5, x_1)$-bridge) yields a contradiction. 
Thus such a $y'$ does not exist, a contradiction. By a similar argument, we can show that $\mathsf{\Lambda}(y) \ne \{x_1, x_5, x_6\}$. 

Suppose there is $x_i \in \{x_2, x_3, x_4\}$ such that $\mathsf{\Lambda}(y) = \{x_i, x_6\}$. Assume $\mathsf{\Lambda}(y) = \{x_2, x_6\}$. Since $|\mathsf{\Lambda}^2(X\setminus\{x_5, x_6\})| \ge |X\setminus\{x_5, x_6\}| = 5$, then there exists a vertex $y' \in (Y \setminus(V(C)\cup \{y\}))\cup \{y_5, y_6\}$ such that $|\mathsf{\Lambda}(y') \cap (X\setminus\{x_5, x_6, x_7\})| \ge 2$. By Lemma~\ref{lem-longcycle}, we know $y' \in V(C)$, meaning that $y' \in \{y_5, y_6\}$. This implies that $|(\mathsf{\Lambda}(y_5) \cup \mathsf{\Lambda}(y_6)) \cap \{x_2, x_3, x_4\}| \ge 1$, contradicting Lemma~\ref{lem-bridge-pro}. Thus such a $y'$ does not exist, a contradiction. We conclude that $\mathsf{\Lambda}(y) \ne \{x_4, x_6\}$ by symmetry, and $\mathsf{\Lambda}(y) \ne \{x_3, x_6\}$ by using a similar argument.
             
\textbf{Case 3b.} \(\mathsf{\Lambda}_C(x_7) = \{y_1, y_2, y_3, y_5\}\).
             
By Lemma~\ref{lem-longcycle}, $x_7$ cannot have any other neighbors, and $y$ must satisfy one of the following:
$\mathsf{\Lambda}(y) =\{x_1, x_4\},\{x_1, x_6\},\{x_4, x_5\},$ or $ \{x_5, x_6\} $. 
Note that if $\mathsf{\Lambda}(y) = \{x_4, x_5\}$, then $\mathsf{\Lambda}(y_4) = \{x_4, x_5\} = \mathsf{\Lambda}(y)$, which contradicts Lemma~\ref{lem-from-KLLZ-iv} (iv). For the same reason, $\mathsf{\Lambda}(y) \ne \{x_1, x_6\}$. Furthermore, if $\mathsf{\Lambda}(y) = \{x_5, x_6\}$, replacing $y_5$ in $C$ with $y$ reduces the scenario to \textbf{Case 2a}. Finally, if $\mathsf{\Lambda}(y) = \{x_1, x_4\}$, then we consider the set $A = X\setminus\{x_1, x_4\}$. Since $|\mathsf{\Lambda}^2(A)| \ge |A| = 5$, there exists a vertex $y' \in (Y\setminus(V(C)\cup\{y\}))\cup \{y_4, y_6\}$ adjacent to at least two vertices in $X\setminus\{x_1, x_4, x_7\}$. From our analysis above, $y'$ must be in $V(C)$, that is, $y' \in \{y_4, y_6\}$. If $y' = y_4$, then $|\mathsf{\Lambda}(y_4) \cap \{x_2, x_3, x_6\}| \ge 1$, and if $y' = y_6$, then $|\mathsf{\Lambda}(y_6) \cap \{x_2, x_3, x_5\}| \ge 1$; either case contradicts Lemma~\ref{lem-bridge-pro}.

\textbf{Case 3c.} \(\mathsf{\Lambda}_C(x_7) = \{y_1, y_2, y_4, y_5\}\).
             
Again, $\mathsf{\Lambda}(x_7) = \{y_1, y_2, y_4, y_5\}$, and $y$ must satisfy one of the following: $\mathsf{\Lambda}(y) = \{x_1, x_3\}, \{x_1, x_6\}$, $\{x_3, x_4\}$, or $\{x_4, x_6\}$. If $\mathsf{\Lambda}(y) = \{x_1, x_6\}$, then $\mathsf{\Lambda}(y_6) = \{x_1, x_6\} = \mathsf{\Lambda}(y)$, contradicting Lemma~\ref{lem-from-KLLZ-iv} (iv). Similarly, $\mathsf{\Lambda}(y) \ne \{x_3, x_4\}$.
By symmetry, it suffices to consider one of the remaining cases.
Assume $\mathsf{\Lambda}(y) = \{x_1, x_3\}$. Let $A = X\setminus \{x_1, x_3\}$. Since $|\mathsf{\Lambda}^2(A)| \ge |A| = 5$, there exists a vertex $y' \in (Y\setminus(V(C)\cup \{y\}))\cup \{y_3, y_6\}$ satisfying $|\mathsf{\Lambda}(y') \cap (X\setminus\{x_1, x_3, x_7\})| \ge 2$. If $y' \notin V(C)\cup\{y\}$, then $y'$ must satisfy $\mathsf{\Lambda}(y') = \{x_4, x_6\}$, and then there exists a $14$-cycle $x_7y_5x_5y_4x_4y'x_6y_6x_1yx_3y_2x_2y_1x_7$, a contradiction. Thus $y' \in V(C)$, that is, $y' \in \{y_3, y_6\}$. If $y' = y_3$, then $|\mathsf{\Lambda}(y_3) \cap \{x_2, x_5, x_6\}| \ge 1$, and if $y' = y_6$, then $|\mathsf{\Lambda}(y_6) \cap \{x_2, x_4, x_5\}| \ge 1$, both of which contradict Lemma~\ref{lem-bridge-pro}.
\qed

\section{Two equivalent conjectures} \label{section:big-degree-Y-for-7}

In this section, we will focus on Conjectures~\ref{conj:Big-degree-Y} and~\ref{subconj:Big-degree-Y} and provide a proof of Theorem~\ref{thm:Big-degree-Y-for-7}. We begin by defining some notation. Let $P$ denote a path in a bipartite graph $G$ with a fixed orientation. For each vertex $u \in V(P)$, let $x^+_P(u)$ (respectively, $x^-_P(u)$) denote the closest vertex in $X\backslash \{u\}$ that is the successor (respectively, predecessor) of $u$ along the path $P$. For a set $U \subseteq V(P)$, define $X_P^+(U) = \{x^+_P(u) : u \in U\}$. The vertices $y^+_P(u)$, $y^-_P(u)$ and the sets $X^-_P(U)$, $Y^+_P(U)$, $Y^-_P(U)$ are defined analogously. When the path $P$ is clear from the content, the subscript may be omitted.

With these definitions, we now introduce two lemmas that will act as the basis of our proof.

\begin{lemma}\label{lemma:y-y'}
If a bigraph $G=(X, Y)$ with $|X| = n$ has a $y-y'$ path covering all vertices in $X$ for some $y, y' \in Y$ with $\deg(y) + \deg(y') \ge n + 2$, then $G$ has a cycle covering all of $X$.
\end{lemma}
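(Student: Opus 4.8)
The plan is a short pigeonhole argument followed by one explicit reconnection of the path into a cycle. First I would record the shape of the given path. Since $G$ is bipartite and the path $P$ runs between the two vertices $y,y'\in Y$ while visiting every vertex of $X$, it alternates strictly between $Y$ and $X$, so we may write
\[
  P = y\, x_1\, z_1\, x_2\, z_2 \cdots x_{n-1}\, z_{n-1}\, x_n\, y',
\]
where $x_1,\dots,x_n$ is an enumeration of $X$ and $z_1,\dots,z_{n-1}\in Y$. Because $y,y'\in Y$, all of their neighbours lie in $X$; in particular $N(y),N(y')\subseteq\{x_1,\dots,x_n\}$ with $|N(y)|=\deg(y)$ and $|N(y')|=\deg(y')$, and note $x_1\in N(y)$, $x_n\in N(y')$ from the ends of $P$.

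Next I would run the counting step. Set $A=\{\,i: x_i\in N(y)\,\}$ and $B=\{\,i+1: x_i\in N(y')\,\}$, both regarded as subsets of $\{1,2,\dots,n+1\}$. Then $|A|+|B|=\deg(y)+\deg(y')\ge n+2 > n+1 = |\{1,\dots,n+1\}|$, so $A\cap B\neq\emptyset$; fix some $j$ in the intersection. From $j\in A$ we get $j\le n$, and from $j\in B$ we get $j\ge 2$, so $2\le j\le n$ while $x_j\in N(y)$ and $x_{j-1}\in N(y')$. (Since $N(y),N(y')\subseteq X$ forces $\deg(y)+\deg(y')\le 2n$, the hypothesis already gives $n\ge 2$, so these indices are meaningful.)

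Finally I would exhibit the cycle obtained by traversing $P$ from $y$ up to $x_{j-1}$, then using the edge $x_{j-1}y'$, then traversing $P$ backwards from $y'$ down to $x_j$, then closing with the edge $x_j y$:
\[
  y\, x_1\, z_1 \cdots z_{j-2}\, x_{j-1}\, y'\, x_n\, z_{n-1} \cdots z_j\, x_j\, y
\]
(with the convention that a degenerate segment collapses when $j=2$ or $j=n$). All listed vertices are distinct vertices of $P$ — the only vertex of $P$ omitted is $z_{j-1}$ — the two new edges $x_{j-1}y'$ and $x_j y$ exist by the choice of $j$, and every $x_i$ occurs, so this is a cycle covering all of $X$.

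As for difficulty, there is essentially no hard step; it is pure bookkeeping once the right move is spotted. The only subtlety is that one cannot simply add an edge $yy'$, since $G$ is bipartite and has no $Y$--$Y$ edge, so the reattachment must be done through a chord that joins $y$ to some $x_j$ and $y'$ to its path-predecessor $x_{j-1}$. It is exactly this ``off by one'' shift (taking $B$ to be $N(y')$ indices plus one) that makes a ground set of size $n+1$ appear, which is why the hypothesis needs the degree sum $n+2$ rather than $n+1$.
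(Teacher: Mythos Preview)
Your proof is correct and is essentially the same pigeonhole-plus-rotation argument as the paper's: the paper shifts the other way, looking at $X^-(N_P(y))$ (of size $\deg(y)-1$) and $N_P(y')$ (of size $\deg(y')$) inside $X$ to force a common vertex $u$, then builds the cycle on $V(P)\setminus\{y_P^+(u)\}$. Your index-shift $B=\{i+1:x_i\in N(y')\}$ over $\{1,\dots,n+1\}$ is the identical idea in different bookkeeping, and your explicit cycle (dropping $z_{j-1}$) matches the paper's description exactly.
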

\begin{proof}
Let $P = y_1x_1y_2x_2 \dots y_nx_ny_{n+1}$ be such a path, where $y_1 = y$ and $y_{n+1} = y'$. Orient $P$ from $y_1$ to $y_{n+1}$. Notice that since $|X^-(\mathsf{\Lambda}_P(y_1))| = \deg(y_1) - 1$ and $|\mathsf{\Lambda}_P(y_{n+1})| = \deg(y_{n+1})$, we have $|X^-(\mathsf{\Lambda}_P(y_1))| + |\mathsf{\Lambda}_P(y_{n+1})| \ge n + 1$. Hence there is some vertex $u\in X$ in both $X^-(\mathsf{\Lambda}_P(y_1))$ and $ \mathsf{\Lambda}_P(y_{n+1})$, so $G$ has a cycle covering all of $X$ using $V(P) \setminus \{y_P^+(u)\}$.
\end{proof}

Additionally, if every vertex $y \in Y$ in $G$ satisfies $\deg(y) > \frac{n+1}{2}$, then we also have the following:

\begin{lemma}\label{lemma:G + xy}
Suppose there exists a non-adjacent pair $x \in X$, $y\in Y$ such that $\deg(x) + \deg(y) \ge n+1$. Then $G + xy$ has a cycle covering all of $X$ if and only if $G$ has a cycle covering all of $X$.
\end{lemma}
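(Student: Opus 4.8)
The plan is a Bondy--Chv\'atal-type closure argument. One direction is trivial: since $G \subseteq G+xy$, any cycle of $G$ covering $X$ is also a cycle of $G+xy$ covering $X$. For the other direction, let $C$ be a cycle of $G+xy$ covering all of $X$. If $xy \notin E(C)$ then $C$ already lies in $G$ and we are done, so assume $xy \in E(C)$. Then $P := C - xy$ is a path of $G$ from $y$ to $x$ covering all of $X$, and since $G+xy$ is bipartite with parts $X$ and $Y$ and $P$ meets every vertex of $X$, we may write $P = y_1 x_1 y_2 x_2 \cdots y_n x_n$ with $y_1 = y$ and $x_n = x$, oriented from $y_1$ to $x_n$. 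The task is to reroute $P$ into a cycle through all of $X$ that avoids the edge $x_n y_1 = xy$.

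I would distinguish two cases according to whether $x$ has a $G$-neighbour outside $P$. Suppose first that some $y^* \in N_G(x) \setminus V(P)$; note $y^* \ne y$, since $x$ and $y$ are non-adjacent in $G$. Then $y_1 x_1 y_2 \cdots y_n x_n y^*$ is a $y_1$--$y^*$ path of $G$ covering all of $X$, and the standing hypothesis that $\deg(z) > \frac{n+1}{2}$ for every $z \in Y$ gives $\deg(y_1) + \deg(y^*) > n+1$, hence $\deg(y_1) + \deg(y^*) \ge n+2$. Lemma~\ref{lemma:y-y'} then produces the desired cycle of $G$.

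Now suppose $N_G(x) \subseteq V(P)$. Let $A = \{\, i : x_i y_1 \in E(G) \,\}$ and $B = \{\, j : x_n y_j \in E(G) \,\}$. Since $x_1 y_1$ and $x_n y_n$ are edges of $P$ we have $1 \in A$ and $n \in B$, while non-adjacency of $x_n = x$ and $y_1 = y$ gives $n \notin A$ and $1 \notin B$. Because $P$ covers all of $X$, the $G$-neighbours of $y_1$ are exactly the $x_i$ with $i \in A$, so $|A| = \deg(y)$; because $N_G(x) \subseteq V(P)$, the $G$-neighbours of $x_n$ are exactly the $y_j$ with $j \in B$, so $|B| = \deg(x)$. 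Thus $|A| + |B| = \deg(x) + \deg(y) \ge n+1$, and since $A, B \subseteq \{1, \dots, n\}$ they share an index $\ell$, which (as $1 \notin B$ and $n \notin A$) satisfies $2 \le \ell \le n-1$. Then $x_\ell y_1 \in E(G)$ and $y_\ell x_n \in E(G)$, and using these two chords together with edges of $P$ we obtain the cycle
\[
y_1\, x_1\, y_2 \cdots x_{\ell-1}\, y_\ell \; x_n\, y_n\, x_{n-1}\, y_{n-1} \cdots y_{\ell+1}\, x_\ell \; y_1,
\]
which a direct check shows passes through every $x_i$ and every $y_j$ and does not use $x_n y_1$; hence it is a cycle of $G$ covering all of $X$.

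The only real obstacle is the first case: the clean count ``$|A| + |B| \ge n+1$'' silently presumes that all neighbours of $x$ lie on $P$, and it is precisely the degree hypothesis on $Y$, channeled through Lemma~\ref{lemma:y-y'}, that rescues the argument when $x$ has a neighbour off $P$. Everything else is bookkeeping with the alternating structure of $P$.
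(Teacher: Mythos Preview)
Your proof is correct and follows essentially the same approach as the paper: the same case split on whether $x$ has a neighbour off $P$, the same appeal to Lemma~\ref{lemma:y-y'} in the first case via the standing hypothesis $\deg(z) > \frac{n+1}{2}$, and the same pigeonhole/crossing-chord argument in the second case. The only difference is cosmetic: the paper phrases the second case using the shift operator $Y^-(N_P(y))$ rather than your index sets $A$ and $B$, but the content is identical.
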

\begin{proof}
Proving the sufficiency is trivial. Now we consider the necessity.

Suppose $G+xy$ has a cycle covering all of $X$. Then $G$ has a $x-y$ path that covers all of $X$. Let $P = yx_1y_2x_2 \dots y_nx$, oriented from $y$ to $x$. If there exists a vertex $y' \in \mathsf{\Lambda}_G(x)\backslash V(P)$, then $G$ contains a $y-y'$ path covering all of $X$. Since $\deg(y) > \frac{n+1}{2}$ for all $y \in Y$, then by Lemma~\ref{lemma:y-y'}, there is a cycle in $G$ covering all of $X$. Now suppose $\mathsf{\Lambda}_G(x) \subseteq V(P)$. Since $|Y^-(\mathsf{\Lambda}_P(y))| = \deg(y)$, it follows that $|Y^-(\mathsf{\Lambda}_P(y))| + |\mathsf{\Lambda}_P(x)| = \deg(y) + \deg(x) \ge n+1$. So $Y^-(\mathsf{\Lambda}_P(y_1))$ and $\mathsf{\Lambda}_P(x_n)$ share a common vertex, and thus $G$ has a cycle covering all of $X$.
\end{proof}

Let $G=(X, Y)$ be a dHp bigraph satisfying $|X| = n > \max \{2k+1, k(k+1)\}$ and $\deg(y) \ge n - k$ for all $y \in Y$. Define $X_s = \{x \in X: \deg(x) \leq k\}$ as the set of vertices in $X$ with small degree, and $X_l = X \setminus X_s$ as the set of vertices in $X$ with large degree. Note that for any $x \in X_l$ and $y \in Y$, we have $\deg(y) \ge n-k > \frac{n+1}{2}$ and $\deg(x) + \deg(y) \geq (k+1)+(n-k) = n+1$. We can derive a new bigraph $H$ from $G$ by adding edges to make $X_l$ and $Y$ form a complete bipartite graph. Since $G$ satisfies the double Hall property, $H$ also satisfies the double Hall property.
 
By directly double-counting the edges in $H$, we have the following claim.

\begin{claim}\label{claim:max-low-degree-count}
     $|X_s| \leq \frac{|Y|k}{|Y|-k}$. In particular, $n > \max\{2k+1, k(k+1)\}$ implies $|X_s| \leq k$, and $|X_l| \ge k + 2$.
\end{claim}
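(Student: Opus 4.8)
The plan is to prove the inequality $|X_s| \le \frac{|Y|k}{|Y|-k}$ by counting the edges of the auxiliary bigraph $H$ in two ways, and then to extract the two stated consequences from the arithmetic hypotheses on $n$.

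First I would bound the edge count from above using the degrees of vertices in $X$. Since $H$ is obtained from $G$ only by adding edges incident to $X_l$, every $x \in X_s$ satisfies $\deg_H(x) = \deg_G(x) \le k$, while every $x \in X_l$ satisfies $\deg_H(x) = |Y|$. Hence $|E(H)| = \sum_{x \in X}\deg_H(x) \le |X_l|\,|Y| + k|X_s| = (n - |X_s|)|Y| + k|X_s|$. For a matching lower bound I use the vertices of $Y$: because $E(G) \subseteq E(H)$ and $\deg_G(y) \ge n - k$ for all $y \in Y$, we get $|E(H)| = \sum_{y \in Y}\deg_H(y) \ge |Y|(n - k)$. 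Comparing the two estimates gives $|Y|(n-k) \le (n - |X_s|)|Y| + k|X_s|$, which simplifies directly to $|X_s|(|Y| - k) \le k|Y|$.

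To divide by $|Y| - k$ I need $|Y| > k$, and this is the only place the double Hall property is invoked: the single instance $|N^2(X)| \ge |X| = n$ forces $|Y| \ge |N^2(X)| \ge n > 2k+1 > k$. Dividing yields $|X_s| \le \frac{k|Y|}{|Y|-k} = k + \frac{k^2}{|Y|-k}$. Since $|Y| \ge n > k(k+1)$ we have $|Y| - k > k^2$, so the last term is strictly less than $1$, and therefore $|X_s| \le k$. Finally $|X_l| = n - |X_s| \ge n - k > (2k+1) - k = k+1$, so $|X_l| \ge k+2$.

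I do not expect a serious obstacle; the argument is elementary. The only points needing care are verifying that the passage from $G$ to $H$ neither decreases any degree in $Y$ nor increases any degree in $X_s$ (both immediate from how $H$ is built), and observing that the bound $|Y| \ge n$ required to legitimize the division comes for free from one application of the double Hall property, so the argument does not need $|X_s| \ge 2$ and the degenerate case $k = 0$ is covered as well.
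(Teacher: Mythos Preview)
Your argument is correct and is essentially identical to the paper's own proof: both double-count $|E(H)|$ from the $X$-side (using $\deg_H(x)\le k$ for $x\in X_s$ and $\deg_H(x)=|Y|$ for $x\in X_l$) and from the $Y$-side (using $\deg_H(y)\ge n-k$), then invoke $|Y|\ge n$ from the double Hall property to divide and conclude. Your write-up is slightly more explicit about why $|Y|>k$ and why passing from $G$ to $H$ preserves the needed degree bounds, but the substance is the same.
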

\begin{proof}
As $H$ is a dHp graph, $|Y| \ge |\mathsf{\Lambda}^2_H(X)| \ge |X| = n$. By construction, $H$ also satisfies $\deg_H (y)\geq n-k$ for each $y\in Y$, implying that $H$ contains at least $|Y|(n-k)$ edges. On the other hand, if $s = |X_s|$, then the maximum number of edges in $H$ is $s k + (n-s)|Y|$. Therefore $sk + (n-s)|Y| \ge |Y|(n-k)$ and so $$ s \le \frac{|Y|k}{|Y|-k} = k+\frac{k^2}{|Y|-k}.$$
If $n > \max\{2k+1, k(k+1)\}$, then $\frac{k^2}{|Y|-k} < 1$, and thus $s \le k$. As $n>2k+1$, this implies that $|X_l|\ge k+2$.
\end{proof}

Next we show the equivalence in the graph $H$ between a cycle covering $X$ and a collection of $Y-Y$ paths whose union covers exactly $X_s$ (that is, covers all of $X_s$ but none of the vertices in $X_l$).

\begin{claim}\label{claim:paths-for-Xs} 
$H$ contains a collection of disjoint and nontrivial $Y-Y$ paths $P_1,\dots, P_m$ whose union covers exactly $X_s$ if and only if there is a cycle covering all of $X$ in $H$.  
\end{claim}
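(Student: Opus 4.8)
The plan is to prove the two implications of the biconditional separately, using only the structural facts about $H$ already recorded: $H[X_l\cup Y]$ is complete bipartite, $|X_s|\le k$, $|X_l|\ge k+2$, and $|Y|\ge |N^2_H(X)|\ge n=|X_s|+|X_l|$.

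For the direction ``cycle $\Rightarrow$ paths'', I would start from a cycle $C$ of $H$ covering all of $X$. Since $H$ is bipartite and $C$ meets every vertex of $X$, $C$ alternates between $X$ and $Y$, so the two $C$-neighbours of each $X$-vertex lie in $Y$; also $X_l$ is independent, so $X_l\cap V(C)$ is a nonempty independent set on $C$. I would delete all of $X_l$ from $C$; this breaks $C$ into arcs, each beginning and ending at a vertex of $Y$ and having all internal $X$-vertices in $X_s$. Discarding the trivial arcs (single $Y$-vertices, which occur exactly when two vertices of $X_l$ lie at distance $2$ on $C$) leaves a family of pairwise disjoint nontrivial $Y$--$Y$ paths. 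I would then check that this family covers exactly $X_s$: it meets no vertex of $X_l$ by construction, and it meets every vertex of $X_s$ because $C$ did and deleting $X_l$ removed neither any vertex of $X_s$ nor any of its $C$-neighbours. (When $X_s=\emptyset$ the family is empty, which is exactly what is required.)

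For the direction ``paths $\Rightarrow$ cycle'', suppose $P_1,\dots,P_m$ are disjoint nontrivial $Y$--$Y$ paths whose union is exactly $X_s$. I would first dispose of $m=0$ (equivalently $X_s=\emptyset$): then $X=X_l$ and $H[X_l\cup Y]=K_{t,|Y|}$ with $t=|X_l|\le|Y|$ and $t\ge2$, which has a cycle through all of $X_l$. For $m\ge1$, note each $P_i$, being a nontrivial $Y$--$Y$ path in a bipartite graph, has the form $y\,x\,y\,x\cdots x\,y$ with $j_i:=|V(P_i)\cap X_s|\ge1$ vertices of $X_s$ and $j_i+1$ vertices of $Y$; since $\sum_i j_i=|X_s|$, the paths together occupy $|X_s|+m$ vertices of $Y$, so at least $|Y|-|X_s|-m\ge|X_l|-m$ vertices of $Y$ remain unused, and moreover $m\le|X_s|\le k<|X_l|$. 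Writing $X_l=\{v_1,\dots,v_t\}$ and letting $a_i,b_i\in Y$ be the endpoints of $P_i$, I would form the cyclic concatenation $P_1Q_1P_2Q_2\cdots P_mQ_m$ (indices mod $m$), where connector $Q_i$ runs from $b_i$ to $a_{i+1}$ through a block of vertices of $X_l$ alternating with unused vertices of $Y$: distribute the $t$ vertices of $X_l$ into the $m$ connectors with $\ell_i\ge1$ in connector $i$ (possible since $t\ge m$), and realise $Q_i$ as $b_i\,v\,y\,v\,y\cdots v\,a_{i+1}$, which consumes $\ell_i-1$ unused $Y$-vertices, for a total of $\sum_i(\ell_i-1)=t-m$, at most the available supply. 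Every connector edge joins $X_l$ to $Y$ and hence lies in $H$, and all vertices used are distinct, so this yields a cycle covering $X_s\cup X_l=X$.

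The only real work is the cardinality bookkeeping in the second implication: I must verify simultaneously that $|X_l|$ is large enough ($\ge m$) to give every connector a nonempty block and that enough $Y$-vertices remain unused to interleave with all of $X_l$, both of which follow from $|X_l|\ge k+2$, $|X_s|\le k$, and $|Y|\ge|X_s|+|X_l|$. One must also handle the degenerate case $X_s=\emptyset$ on both sides; everything else is routine.
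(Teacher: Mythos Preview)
Your proof is correct and follows essentially the same approach as the paper: both directions hinge on the complete bipartite structure of $H[X_l\cup Y]$ together with the cardinality facts $|X_l|\ge k+2\ge m+2$ and $|Y|\ge n$. The only cosmetic difference is that the paper first concatenates $P_1,\dots,P_m$ into a single $Y$--$Y$ path using $m-1$ vertices of $X_l$ and then covers the remaining $X_l$-vertices with one $X$--$X$ path before closing up, whereas you distribute all of $X_l$ among $m$ cyclic connectors; the underlying counting ($|Y|-|X_s|-m\ge |X_l|-m$) is identical.
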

\begin{proof}
Proving the sufficiency is trivial. For the necessity, we may assume $|X_s| \ge 1$; if $|X_s| = 0$, then $H$ is a complete bigraph with $|Y| \ge |X|$, which clearly contains a cycle covering $X$.

By Claim~\ref{claim:max-low-degree-count}, we have $|X_l| \ge k + 2 \ge |X_s| + 2 \ge m + 2$. Since all the vertices in $X_l$ have degree $|Y|$, we can join these $m$ paths by using $m-1$ distinct vertices in $X_l$ and form a $Y-Y$ path $P$ that contains all vertices in $X_s$. Let $y,y'\in Y$ be the endpoints of $P$. Also, $|Y\setminus V(P)|\geq |X\setminus V(P)|-1 \ge |X_l \setminus V(P)|-1$,  so there exists an $X-X$ path $P'$ with endpoints $x,x' \in X_l \setminus V(P)$ that can cover the remaining vertices in $X_l\setminus V(P)$. Then the cycle $xP'x'yPy'x$ covers all of $X$ in $H$.
\end{proof}

By the construction of $H$ and Lemma~\ref{lemma:G + xy}, the following result can be directly derived from Claim~\ref{claim:paths-for-Xs}.
\begin{fact}\label{fact:re-paths-for-Xs}
$G$ contains a collection of disjoint and nontrivial $Y-Y$ paths $P_1,\dots, P_m$ whose union covers exactly $X_s$ if and only if $G$ has a cycle covering all of $X$.
\end{fact}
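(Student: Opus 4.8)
The plan is to establish Fact~\ref{fact:re-paths-for-Xs} by transferring Claim~\ref{claim:paths-for-Xs} from $H$ back to $G$. There are two things to transfer: first, that $G$ has a cycle covering $X$ if and only if $H$ does; and second, that $G$ has a collection of disjoint nontrivial $Y-Y$ paths covering exactly $X_s$ if and only if $H$ does. Granting both, the chain of equivalences ``$G$ has such a path system'' $\iff$ ``$H$ has such a path system'' $\iff$ (Claim~\ref{claim:paths-for-Xs}) ``$H$ has a cycle covering $X$'' $\iff$ ``$G$ has a cycle covering $X$'' finishes the proof.

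For the first transfer, recall that $H$ is obtained from $G$ by adding, one at a time, all missing edges $xy$ with $x \in X_l$ and $y \in Y$. At each stage the current graph contains $G$, so every $y \in Y$ still has degree at least $n-k$, which exceeds $\frac{n+1}{2}$ because $n > 2k+1$; moreover the endpoints of the edge being added satisfy $\deg(x) + \deg(y) \ge (k+1) + (n-k) = n+1$. Thus Lemma~\ref{lemma:G + xy} applies to each added edge and shows that the property ``has a cycle covering all of $X$'' is preserved at every step, so $G$ has such a cycle if and only if $H$ does.

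For the second transfer, observe that $G$ and $H$ have exactly the same edges incident to $X_s$, since every edge added to form $H$ has one endpoint in $X_l$. Now any nontrivial $Y-Y$ path whose vertex set meets $X$ only inside $X_s$ has all of its $X$-vertices internal and lying in $X_s$ (its two endpoints are in $Y$), so every edge of the path joins a vertex of $Y$ to a vertex of $X_s$ and therefore already belongs to $G$. Consequently a family of disjoint nontrivial $Y-Y$ paths in $H$ covering exactly $X_s$ is automatically such a family in $G$; the reverse implication is immediate from $G \subseteq H$. This completes the second transfer and hence the proof.

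I do not anticipate a genuine obstacle: the argument is a routine ``pull back along the edge additions'' once one notices (i) that the degree hypotheses of Lemma~\ref{lemma:G + xy} survive the successive additions and (ii) that a path system covering exactly $X_s$ cannot use any of the new edges. The only point requiring care is to add the edges of $H$ not in $G$ one at a time, so that Lemma~\ref{lemma:G + xy} is legitimately invoked at each step rather than all at once.
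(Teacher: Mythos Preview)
Your proposal is correct and matches the paper's intended argument: the paper merely states that Fact~\ref{fact:re-paths-for-Xs} follows directly from Claim~\ref{claim:paths-for-Xs} via the construction of $H$ and Lemma~\ref{lemma:G + xy}, and you have spelled out precisely the two transfers this entails. Your care in adding the extra edges one at a time so that the degree hypotheses of Lemma~\ref{lemma:G + xy} persist, and in noting that paths covering exactly $X_s$ use only edges common to $G$ and $H$, fills in exactly the details the paper leaves implicit.
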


Now we are ready to show the equivalence between the two conjectures.

\begin{lemma} \label{lemma:equivalent-conjecture}
 Conjecture~\ref{conj:Big-degree-Y} is equivalent to Conjecture~\ref{subconj:Big-degree-Y}.
\end{lemma}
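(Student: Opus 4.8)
The plan is to prove each implication separately, using the machinery built up in this section—particularly Fact~\ref{fact:re-paths-for-Xs}, which reduces ``$G$ has a cycle covering $X$'' to ``$G$ has disjoint nontrivial $Y$--$Y$ paths covering exactly $X_s$.''

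\textit{Conjecture~\ref{subconj:Big-degree-Y} $\Rightarrow$ Conjecture~\ref{conj:Big-degree-Y}.} Let $G=(X,Y)$ be a dHp bigraph with $|X|=n>\max\{2k+1,k(k+1)\}$ and $\deg(y)\ge n-k$ for all $y\in Y$. Set $X_s,X_l$ as above; by Claim~\ref{claim:max-low-degree-count} we have $|X_s|\le k$ and $|X_l|\ge k+2$. The idea is to restrict attention to the subgraph that ``sees'' $X_s$ and apply Conjecture~\ref{subconj:Big-degree-Y} there. Concretely, I would consider the bigraph $G'$ obtained by taking $X_s$ together with the portion of $Y$ relevant to it—first argue that $G[X_s \cup N(X_s)]$ (or a suitable subgraph of it) is dHp: for $A\subseteq X_s$ with $|A|\ge 2$, $|N^2_G(A)|\ge|A|$ already, and since every $y\in N(X_s)$ has degree $\ge n-k$ while $|X_s|\le k < n-k$, each such $y$ in fact has a neighbor in $X_l$, which will let me verify the stronger hypothesis $|N(X_s)|\ge|X_s|+1$ needed to invoke Conjecture~\ref{subconj:Big-degree-Y}. (If $|X_s|\le 1$ the conclusion is immediate from Claim~\ref{claim:paths-for-Xs}, so assume $|X_s|\ge 2$.) The strict inequality $|N^2(X_s)|\ge |X_s|+1$ should follow because if $|N^2(X_s)|=|X_s|$ one can extract a smaller violating set or use that the high $Y$-degrees force extra common neighbors; alternatively I can enlarge $X_s$ slightly by a vertex of $X_l$ to gain the $+1$. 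Conjecture~\ref{subconj:Big-degree-Y} then yields disjoint nontrivial $Y$--$Y$ paths covering all of $X_s$—and by taking these paths inside $G[X_s\cup N(X_s)]$ they avoid $X_l$, hence cover \emph{exactly} $X_s$. Fact~\ref{fact:re-paths-for-Xs} finishes.

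\textit{Conjecture~\ref{conj:Big-degree-Y} $\Rightarrow$ Conjecture~\ref{subconj:Big-degree-Y}.} Let $G=(X,Y)$ be dHp with $|N(X)|\ge|X|+1$; write $n=|X|$. The idea is to pad $G$ up to an instance of Conjecture~\ref{conj:Big-degree-Y}: add a large independent set $Z$ of new vertices to the $X$-side and make each new vertex adjacent to every vertex of $Y$ (and, if needed, add a few new $Y$-vertices of full degree), producing a dHp bigraph $\widehat G=(X\cup Z,\;Y')$ with $\widehat n=|X\cup Z|$ large enough that $\widehat n>\max\{2k+1,k(k+1)\}$ where $k$ is chosen so that $\deg_{\widehat G}(y)\ge \widehat n-k$ holds for every $y\in Y'$—we can take $k=\widehat n-\min_{y}\deg_G(y)$, and by making $|Z|$ huge this forces $k$ small \emph{relative to the growth of the thresholds}; one must check the arithmetic so that $\widehat n>k(k+1)$ still holds, which works because adding each vertex of $Z$ raises $\widehat n$ by $1$ and $k$ by $1$ in lockstep, so the gap $\widehat n-k(k+1)$ can be driven positive. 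Then $\widehat G$ is still dHp (adding full-degree vertices to $X$ never hurts the double Hall condition, and the $|N(X)|\ge|X|+1$ hypothesis guarantees $|N^2|$ stays large enough once the new vertices with two-in-$Z$ neighborhoods are accounted for). Conjecture~\ref{conj:Big-degree-Y} gives a cycle covering $X\cup Z$ in $\widehat G$. Now in this graph $X_s$ is exactly the original low-degree part of $X$ (the $Z$-vertices and new $Y$-vertices are all high degree), so by Fact~\ref{fact:re-paths-for-Xs} applied to $\widehat G$ there are disjoint nontrivial $Y'$--$Y'$ paths covering exactly $X_s$; splicing together these paths with full-degree connector vertices drawn from $Z\cup X_l$ (there are plenty), as in the proof of Claim~\ref{claim:paths-for-Xs}, one assembles disjoint $Y$--$Y$ paths in $G$ itself covering all of $X$.

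\textit{Main obstacle.} The delicate point in both directions is the bookkeeping of the parameter $k$ versus the thresholds $2k+1$ and $k(k+1)$: in the forward direction I must make sure the restricted instance actually satisfies $|N(X_s)|\ge|X_s|+1$ (getting the crucial ``$+1$'' is where the hypothesis $|N(X)|\ge|X|+1$ or the high $Y$-degree must be spent), and in the reverse direction I must pad $\widehat G$ so that simultaneously $\deg(y)\ge\widehat n-k$, $\widehat n>2k+1$, \emph{and} $\widehat n>k(k+1)$ all hold while $\widehat G$ remains dHp. The rest is routine assembly of paths into cycles (and cycles back into $Y$--$Y$ paths) exactly along the lines already established by Claims~\ref{claim:max-low-degree-count} and~\ref{claim:paths-for-Xs} and Fact~\ref{fact:re-paths-for-Xs}.
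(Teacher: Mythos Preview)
Your forward direction (Conjecture~\ref{subconj:Big-degree-Y} $\Rightarrow$ Conjecture~\ref{conj:Big-degree-Y}) is essentially the paper's argument: the key inequality $|N_G(X_s)|\ge|X_s|+1$ is obtained precisely by your ``enlarge $X_s$ by a vertex of $X_l$'' idea, since $N^2_G(X_s\cup\{x\})\subseteq N_G(X_s)$ for any $x\in X_l$; one then applies Conjecture~\ref{subconj:Big-degree-Y} to $G[X_s\cup N_G(X_s)]$ and finishes via Fact~\ref{fact:re-paths-for-Xs}.

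The backward direction has the right strategy---pad $G$ to an instance of Conjecture~\ref{conj:Big-degree-Y}---but two steps fail as written. First, the parameter arithmetic: with your choice $k=\widehat n-\min_y\deg_G(y)$, each added $Z$-vertex indeed raises $\widehat n$ and $k$ by~$1$ in lockstep, but then $k(k+1)$ grows quadratically while $\widehat n$ grows linearly, so the gap $\widehat n-k(k+1)$ is driven \emph{negative}, not positive. The paper instead fixes $k=|X|$ once and for all, then chooses $n>\max\{2k+1,k(k+1)\}$ and adds $n-k$ new vertices to each side, joining every new $X$-vertex to all of $Y^*$; now every $y\in Y^*$ automatically has degree at least $n-k$, and the hypothesis $|N_G(X)|\ge|X|+1$ is exactly what makes the padded graph dHp. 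Second, your extraction via Fact~\ref{fact:re-paths-for-Xs} applied to $\widehat G$ only yields paths covering $X_s$ of $\widehat G$, which need not be all of the original $X$ (any original $x$ with $\deg_G(x)>k$ lands in $X_l$), and ``splicing with connectors from $Z\cup X_l$'' would reassemble a cycle in $\widehat G$ rather than the desired path collection in $G$. The paper's extraction is direct: from the cycle in $G^*$ covering $X^*$, simply delete the vertices of $X^*\setminus X$. What remains is a disjoint union of $Y$--$Y$ paths covering $X$, and because the new $Y$-vertices are adjacent only to new $X$-vertices, all edges of these paths already lie in $G$.
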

\begin{proof}
Suppose Conjecture~\ref{subconj:Big-degree-Y} is true. Let $G$ be a dHp graph which satisfies the conditions in Conjecture~\ref{conj:Big-degree-Y} for some $k$. If $|X_s|= 0$, then by Fact~\ref{fact:re-paths-for-Xs}, we are done. We may assume $|X_s| \ge 1$, and then $|X_l| \ge |X_s| + 2$. Let $x\in X_l$. By checking the double Hall property on $X_s \cup \{x\}$ in $G$, we have $|\mathsf{\Lambda}^2_G(X_s \cup \{x\})| \ge |X_s|+1$, and thus $|\mathsf{\Lambda}_G(X_s)| \ge |X_s|+1$, as $\mathsf{\Lambda}^2_G(X_s \cup \{x\}) \subseteq \mathsf{\Lambda}_G(X_s)$.
So if $|X_s| = 1$, we can find a $Y-Y$ path that covers exactly the only vertex in $X_s$, and by Fact~\ref{fact:re-paths-for-Xs}, $G$ contains a cycle covering all of $X$. Now suppose $|X_s| \ge 2$.
Let $G^*=G[X_s \cup \mathsf{\Lambda}_G(X_s)]$. Clearly, $G^*$ is dHp and $|\mathsf{\Lambda}_{G^*}(X_s)| = |\mathsf{\Lambda}_G(X_s)| \ge |X_s| + 1$, indicating that $G^*$ meets the conditions of Conjecture~\ref{subconj:Big-degree-Y}. Then there exists a collection of disjoint $Y-Y$ paths in $G^*$, and hence in $G$, whose union covers exactly $X_s$. By Fact~\ref{fact:re-paths-for-Xs}, $G$ contains a cycle covering all of $X$. Therefore Conjecture~\ref{conj:Big-degree-Y} holds.
	
Suppose Conjecture~\ref{conj:Big-degree-Y} is true. Let $G=(X,Y)$ be a dHp bigraph such that $|\mathsf{\Lambda}_G(X)| \ge |X| + 1$. Assume $|X|=k$. Let $n>\max \{2k+1,k(k+1)\}$. Construct the bigraph $G^*=(X^*,Y^*)$ by adding $n-k$ vertices to $X$ and $n-k$ vertices to $Y$, and joining every $x\in X^*\backslash X$ to all vertices in $Y^*$. It is easy to see that $G^*$ meets the conditions of Conjecture~\ref{conj:Big-degree-Y}. Then there is a cycle in $G^*$ that covers $X^*$. Removing $X^*\setminus X$ from the cycle gives a collection of disjoint $Y-Y$ paths in $G^*$ whose union covers $X$, and these paths also exist in $G$, since any edge between $X$ and $Y^*$ in $G^*$ also exists in $G$. Therefore we confirm Conjecture~\ref{subconj:Big-degree-Y}.
\end{proof} 

Finally, we give a short proof of Theorem~\ref{thm:Big-degree-Y-for-7}. 
\begin{proof}[Proof of Theorem~\ref{thm:Big-degree-Y-for-7}]
Since $0 \le k \le 7$, by Claim~\ref{claim:max-low-degree-count}, we know $|X_s| \le 7$.
By applying Lemma~\ref{lemma:equivalent-conjecture}, the task is reduced to showing that Conjecture~\ref{subconj:Big-degree-Y} holds for $2 \le |X| \le 7$, as this conjecture is straightforward to verify for $|X| = 0$ or $1$. Observe that, over the range of $2 \le |X| \le 7$, it suffices to show that there is a cycle that covers $X$. Hence we can say that Conjecture~\ref{con-Salia} implies Conjecture~\ref{subconj:Big-degree-Y}. By Theorem~\ref{thm:cycle-for-7}, Conjecture~\ref{con-Salia} is valid for $2 \le |X| \le 7$, as it combines the double Hall property for $|X| = 2$ with the fact that every dHp bigraph with $|X| \ge 3$ is snp. Consequently, Conjecture~\ref{subconj:Big-degree-Y} is confirmed for this range, completing the proof.
\end{proof}

Conversely, however, we can only state that Conjecture~\ref{subconj:Big-degree-Y} nearly implies Conjecture~\ref{con-Salia}, as the former considers the ``collection of disjoint $Y-Y$ paths,'' while the latter focuses on a cycle. But in the important special case where $|\mathsf{\Lambda}(X)|$ is very close to $|X|$, the collection of disjoint $Y-Y$ paths covering $X$ is nearly equivalent to a cycle covering $X$.  

\section{Proof of Theorem~\ref{thm:set-of-degree-Y}} \label{section:set-of-degree-Y}
In this section, we will apply Hall's Theorem~\cite{diestel} to prove Theorem~\ref{thm:set-of-degree-Y}. The idea is first to use only vertices in $Y$ with degree $2$, along with all vertices in $X$, to form disjoint $X-X$ paths. Then, we find enough suitable vertices in $Y$ with large degree to join the paths.
\begin{theorem}\textnormal{(Hall's Theorem)}\label{hall-thm}
A bipartite graph $G=(X, Y)$ contains a matching covering all vertices of $X$ if and only if $|\mathsf{\Lambda}(S)| \geq |S|$ for all $S \subseteq X$.
\end{theorem}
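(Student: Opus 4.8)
The plan is to prove Hall's Theorem by induction on $n = |X|$; the forward implication is immediate, so essentially all of the work lies in showing that the condition $|N(S)| \ge |S|$ for every $S \subseteq X$ forces a matching saturating $X$. For necessity, if $M$ is a matching covering $X$, then for any $S \subseteq X$ the $M$-partners of the vertices of $S$ are $|S|$ distinct vertices of $N(S)$, so $|N(S)| \ge |S|$. The base case $n = 1$ of the induction is trivial, since $|N(\{x\})| \ge 1$ just says $x$ has a neighbor. For the inductive step I would split into two cases according to whether Hall's condition is ever tight on a nonempty proper subset of $X$.

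\emph{Case 1: every nonempty proper subset $S \subsetneq X$ satisfies the strict inequality $|N(S)| \ge |S| + 1$.} Pick an arbitrary $x \in X$ and a neighbor $y \in N(x)$, and delete both. In $G - x - y$, any $S \subseteq X \setminus \{x\}$ loses at most the single vertex $y$ from its neighborhood, so $|N_{G-x-y}(S)| \ge |N_G(S)| - 1 \ge |S|$ (vacuous if $S = \emptyset$, and using the strict inequality if $S \ne \emptyset$). Hence Hall's condition holds in $G - x - y$, whose $X$-side has $n-1$ vertices, so by induction there is a matching saturating $X \setminus \{x\}$; adding $xy$ saturates $X$.

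\emph{Case 2: there is a nonempty proper subset $A \subsetneq X$ with $|N(A)| = |A|$.} Here I would apply the inductive hypothesis twice. On $G_1 = G[A \cup N(A)]$, for $S \subseteq A$ we have $N_{G_1}(S) = N_G(S)$ since $N_G(S) \subseteq N_G(A)$, so Hall's condition holds and the induction gives a matching $M_1$ saturating $A$. On the graph $G_2$ obtained by deleting $A$ and $N(A)$ (so its $X$-side is $X \setminus A$), take any $S \subseteq X \setminus A$; then $N_{G_2}(S) = N_G(S) \setminus N(A)$ and
\[
|N_{G_2}(S)| = |N_G(S \cup A)| - |N(A)| \ge |S \cup A| - |A| = |S|,
\]
using $N_G(S \cup A) = N_G(S) \cup N_G(A)$. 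So Hall's condition holds in $G_2$, giving a matching $M_2$ saturating $X \setminus A$; as $M_1$ and $M_2$ live on disjoint vertex sets, $M_1 \cup M_2$ saturates $X$.

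The only real obstacle is in Case 2: one must arrange the two subproblems so each inherits Hall's condition, and the slightly delicate step is the displayed inequality $|N_{G_2}(S)| \ge |S|$, which is precisely where the tightness $|N(A)| = |A|$ gets consumed; everything else is bookkeeping. An alternative route is the augmenting-path argument — start from any matching, and if it fails to saturate some $x \in X$, search alternating paths from $x$ to either enlarge the matching or exhibit a violating set $S$ with $|N(S)| < |S|$ — but the inductive proof above is the most self-contained and is all we need here.
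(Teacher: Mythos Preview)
Your proof is correct and is the standard inductive argument for Hall's Theorem. Note, however, that the paper does not actually prove this statement at all: it is quoted as background with a citation to Diestel's textbook, so there is no in-paper argument to compare against. Your write-up simply supplies the classical proof that the citation points to.
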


\noindent \textit{Proof of Theorem~\ref{thm:set-of-degree-Y}.} Define $Y_s = \{y \in Y: \deg(y) = 2\}$ and $Y_l = Y\setminus Y_s$. Among all collections of disjoint $X-X$ paths whose union covers $X$ using only vertices in $Y_s\cup X$, let $P_1, P_2, P_3,\dots, P_m$ be a collection in which $m$ is minimized. The paths have endpoints $(x_L^1, x_R^1)$ through $(x_L^m, x_R^m)$. Define $L = \{x_L^1, \dots, x_L^m\}$ and $R = \{x_R^1, \dots, x_R^m\}$.  
Note that it is possible to have $x_L^i = x_R^i$ for some $i$.

When $m = 1$, since $|X| \ge 2$, the path $P_1$ is an $X-X$ path with distinct endpoints that covers all vertices in $X$. Applying the double Hall property to these two endpoints yields at least two common neighbors in $Y\setminus V(P_1)$. Selecting one of them allows us to join the endpoints of $P_1$, thereby forming the desired cycle.

Now we assume $m \ge 2$. Note that $\mathsf{\Lambda}^2(L),\mathsf{\Lambda}^2(R) \subseteq Y_l$; otherwise, we could have made the size of the collection of $X-X$ paths smaller in the first step. By applying the double Hall property to $R$, where $|R| = m \ge 2$, we have $|Y_l| \ge m$. Next, we show that the paths can be joined together using the high-degree vertices in $Y_l$. To do this, consider the \emph{intended pairs} $(x_R^i, x_L^{i+1})$ for $i=1, \dots, m$ (with $x_L^{m+1} = x_L^1$). Our goal is to find a high-degree common neighbor for each intended pair.

Let $F$ be the bipartite graph with the set of the intended pairs as vertices in one partition and $Y_l$ as the other. An intended pair $(x_R^i, x_L^{i+1})$ is adjacent to $y \in Y_l$ in $F$ if $y$ is a common neighbor of $x_R^i$ and $x_L^{i+1}$. Furthermore, assume these disjoint paths are ordered in such a way that $|E(F)|$ is maximized. If there exists a matching $M$ in $F$ that covers all the intended pairs, then the corresponding edges in $G$ together with $m$ paths form a cycle covering all of $X$. Therefore it suffices to verify Hall's condition in $F$.
Let $S$ be a set of intended pairs in $F$, where $1 \le |S| \le m$, and let $V(S) \subseteq X$ denote the set of all vertices that appear in any pair in $S$.

If $|S| \le 2$, then the double Hall property implies that the two vertices in each intended pair in $S$ have at least two common neighbors. These common neighbors must be in $Y_l$ by the minimality of $m$. Thus $|\mathsf{\Lambda}_F(S)| \ge 2 \ge |S|$.

If $|S| \geq 5$, we claim $|\mathsf{\Lambda}_F(S)| \ge m \ge |S|$. Since any vertex $y \in Y_l$ satisfies $\deg_G(y)\ge |X|-2$, at most two vertices in $X$ are not adjacent to $y$. Then $y$ has at most four non-neighbors in $F$. However, if $|S| \geq 5$, every $y \in Y_l$ has at least one neighbor in $S$. Thus $|\mathsf{\Lambda}_F(S)| = |Y_l| \ge m$. 

The remaining cases are $|S| = 3$ and $|S| = 4$. Note that if each $P_i$ has length at least $2$, then every $y\in Y_l$ has at least one neighbor in $S$, so $|\mathsf{\Lambda}_F(S)|=|Y_l|$. Thus a potential obstacle arises only if there is some path consisting of a single vertex and some $y \in Y_l$ is non-adjacent to this vertex. Note again that the common neighbor of any pair of vertices that are endpoints of different paths must in $Y_l$.

Now, assume for contradiction that some set $S$ fails Hall's condition in $F$.

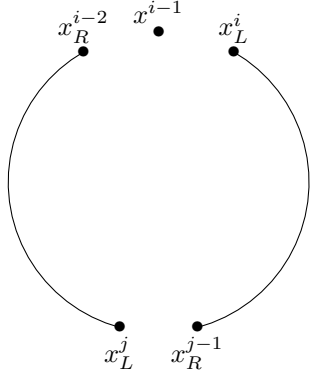
\begin{figure}[H]
\centering
\begin{tikzpicture}[scale=0.9]
\node at (120:2) {$\bullet$};
\node at (90:2) {$\bullet$};
\node at (60:2) {$\bullet$};
\node at (-75:2) {$\bullet$};
\node at (-105:2) {$\bullet$};
\draw [domain=-75:60] plot ({2*cos(\x)}, {2*sin(\x)});
\draw [domain=120:255] plot ({2*cos(\x)}, {2*sin(\x)});
\node [above] at (120:2) {$x_R^{i-2}$};
\node [above] at (90:2) {$x^{i-1}$};
\node [above] at (60:2) {$x_L^i$};
\node [below] at (-75:2) {$x_R^{j-1}$};
\node [below] at (-105:2) {$x_L^j$};
\end{tikzpicture}
\caption{Example for $|S| = 3$}\label{example_1}
\end{figure}

Suppose $|S| = 3$. Figure 1 shows the three intended pairs $(x_R^{i-2},x^{i-1}),(x^{i-1},x_L^i),(x_R^{j-1},x_L^j)$ in $S$, where $P_{i-1}=x^{i-1}$. If both segments $x_L^j-x_R^{i-2}$ and $x_R^{j-1}-x_L^i$ are trivial, i.e., an isolated vertex, then $|X| = 3$, and by Theorem~\ref{thm:cycle-for-7}, $G$ has a cycle covering all of $X$. Thus we may assume $x_L^j\neq x_R^{i-2}$. Since $G$ is a dHp graph, the only case where $|\mathsf{\Lambda}_F(S)| < |S| = 3$ arises when exactly two vertices $y, y'\in Y_l$ are adjacent to every vertex in $V(S)$, thereby securing the double Hall property for each pair, while every other vertex in $Y_l$ is not adjacent to any pair in $S$. Now checking the double Hall property on $L \cap V(S)$ implies the existence of  $y'' \in Y_l$ that is adjacent to exactly two vertices in $L \cap V(S)$. Note that at least one of these vertices must belong to a non-trivial segment; otherwise, they would be an intended pair in $S$, with $y''$ as its neighbor in $F$, contradicting $|\mathsf{\Lambda}_F(S)| < |S|$. Now we reverse the non-trivial segment, that is, rewrite the segment in the reverse order.
Then all intended pairs in $F$ are unchanged except for two intended pairs in $S$, while $|S|$ and $V(S)$ remain the same. Consequently, the degree in $F$ of every vertex in $Y_l$ does not decrease. Furthermore, $y''$ can gain a new edge from the newly created intended pair in $S$. This increases $|E(F)|$, contradicting the choice of $F$.

\begin{figure}[H]
	\centering
	\begin{subfigure}[b]{0.45\textwidth}
	\centering
	\begin{tikzpicture}[scale=0.9]
	\node at (120:2) {$\bullet$};
	\node at (90:2) {$\bullet$};
	\node at (60:2) {$\bullet$};
	\node at (-120:2) {$\bullet$};
	\node at (-90:2) {$\bullet$};
	\node at (-60:2) {$\bullet$};
	\draw [domain=-60:60] plot ({2*cos(\x)}, {2*sin(\x)});
	\draw [domain=120:240] plot ({2*cos(\x)}, {2*sin(\x)});
	\node [above] at (120:2) {$x_1$};
	\node [above] at (90:2) {$x_2$};
	\node [above] at (60:2) {$x_3$};
	\node [below] at (-60:2) {$x_4$};
	\node [below] at (-90:2) {$x_5$};
	\node [below] at (-120:2) {$x_6$};
	\end{tikzpicture}
	\end{subfigure}
    \qquad
    \begin{subfigure}[b]{0.45\textwidth}
    \centering
    \begin{tikzpicture}[scale=0.9]
    \node at (150:2) {$\bullet$};
    \node at (120:2) {$\bullet$};
    \node at (90:2) {$\bullet$};
    \node at (60:2) {$\bullet$};
    \node at (30:2) {$\bullet$};
    \draw [domain=150:390] plot ({2*cos(\x)}, {2*sin(\x)});
    \node [above] at (150:2) {$x_1$};
    \node [above] at (120:2) {$x_2$};
    \node [above] at (90:2) {$x_3=x_4$};
    \node [above] at (60:2) {$x_5$};
    \node [above] at (30:2) {$x_6$};
    \end{tikzpicture}
    \end{subfigure}
    \caption{Two configurations for $|S| = 4$}
    \label{fig:2-configs-for-4}
\end{figure}
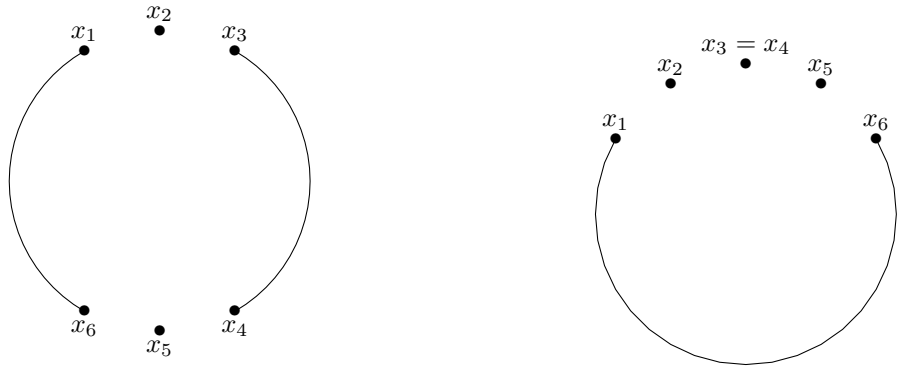

Suppose $|S| = 4$, and we consider our intended cycle as in the previous case. If all segments are isolated vertices, then $|X|=4$ and we are done by Theorem~\ref{thm:cycle-for-7}. If only one of the segments is an isolated vertex, or if exactly two segments are isolated vertices and they are consecutive, then $\mathsf{\Lambda}_F(S) = Y_l$, since every $y \in Y_l$ can miss at most three intended pairs in $S$. Therefore only two cases remain (see Fig. 2).
For convenience, let $V(S) = \{x_1,\dots,x_6\}$, where $x_2$ and $x_5$ are isolated vertices, each forming their own path. It is possible that $x_3 = x_4$ (see Fig. 2 (2)). Let $\mathsf{\Lambda}_{ij}=\mathsf{\Lambda}_F((x_i, x_j))$ be the set of vertices in $Y_l$ that are adjacent to the intended pair $(x_i, x_j)$ in $F$.

In both cases, to achieve $|\mathsf{\Lambda}_F(S)| < |S| = 4$, all but at most three vertices in $Y_l$ are non-adjacent to every intended pair in $S$. In other words, all but at most three vertices in $Y_l$ must be non-adjacent to both $x_2$ and $x_5$. By the double Hall property, we know $|\mathsf{\Lambda}_{ij}| \ge 2$ for each $(x_i, x_j) \in S$. Additionally, $|\mathsf{\Lambda}_{12} \cap \mathsf{\Lambda}_{45}| \ge 1$ since $|\mathsf{\Lambda}_{12} \cup \mathsf{\Lambda}_{45}| \le 3$. Now we view $x_2-x_3\cdots x_4$ as one segment and reverse it. By doing so, the intended pairs $(x_1, x_2)$ and $(x_4, x_5)$ are replaced by the pairs $(x_1, x_4)$ and $(x_2, x_5)$, respectively, while all other pairs remain unchanged. Thus we lose $|\mathsf{\Lambda}_{12}| + |\mathsf{\Lambda}_{45}|$ edges and gain $|\mathsf{\Lambda}_{14}| + |\mathsf{\Lambda}_{25}|$ edges. Since $\deg(y) \ge |X| - 2$ for any $y \in Y_l$, every vertex in $Y_l$ that is not adjacent to both $x_2$ and $x_5$ must be a neighbor of all other vertices in $X$. So $|\mathsf{\Lambda}_{14}| \ge |Y_l| - 3 + |\mathsf{\Lambda}_{12} \cap \mathsf{\Lambda}_{45}|$ and, by the double Hall property, $|\mathsf{\Lambda}_{25}| \ge 2$. Hence,
    \begin{align*}
           (|\mathsf{\Lambda}_{14}| + |\mathsf{\Lambda}_{25}|)-(|\mathsf{\Lambda}_{12}| + |\mathsf{\Lambda}_{45}|) & \ge (|Y_l| - 3 + |\mathsf{\Lambda}_{12} \cap \mathsf{\Lambda}_{45}|)+2-|\mathsf{\Lambda}_{12}|-|\mathsf{\Lambda}_{45}|\\
    & = |Y_l| - |\mathsf{\Lambda}_{12} \cup \mathsf{\Lambda}_{45}| - 1\\
    & \ge |Y_l| - 4.
    \end{align*}
Since $|Y_l| \ge m \ge |S| = 4$, we have $(|\mathsf{\Lambda}_{14}| + |\mathsf{\Lambda}_{25}|)-(|\mathsf{\Lambda}_{12}| + |\mathsf{\Lambda}_{45}|) \ge 0$, with equality holding if $|Y_l| = 4$, $|\mathsf{\Lambda}_{12} \cup \mathsf{\Lambda}_{45}| = 3$, $|\mathsf{\Lambda}_{25}| = 2$, and $|\mathsf{\Lambda}_{14}| = 1 + |\mathsf{\Lambda}_{12} \cap \mathsf{\Lambda}_{45}|$, where $|\mathsf{\Lambda}_{12} \cap \mathsf{\Lambda}_{45}| \ge 1$. In this case, let $Y_l = \{y_1, y_2, y_3, y_4\}$, and, without loss of generality, assume $y_1 \in \mathsf{\Lambda}_{12} \cap \mathsf{\Lambda}_{45}$, $y_2 \in \mathsf{\Lambda}_{25}$, $y_3 \notin \mathsf{\Lambda}_{25}$, and $y_4 \notin \mathsf{\Lambda}_G(x_2) \cup \mathsf{\Lambda}_G(x_5)$. Applying the double Hall property to the sets $\{x_1, x_2, x_5\}$ and $\{x_2, x_4, x_5\}$, respectively, indicates that $y_3$ satisfies $x_1, x_4 \in \mathsf{\Lambda}_G(y_3)$. Thus we know $y_1, y_3, y_4 \in \mathsf{\Lambda}_{14}$. Additionally, since $|\mathsf{\Lambda}_{14}| - 1 = |\mathsf{\Lambda}_{12} \cap \mathsf{\Lambda}_{45}| \le |\mathsf{\Lambda}_{25}| = 2$, it follows that $|\mathsf{\Lambda}_{14}| = 3$, implying $y_2 \notin \mathsf{\Lambda}_{14}$ and $|\mathsf{\Lambda}_{12} \cap \mathsf{\Lambda}_{45}| = 2$. However, our assumption implies $y_3, y_4 \notin \mathsf{\Lambda}_{12} \cap \mathsf{\Lambda}_{45}$, which forces $y_2 \in \mathsf{\Lambda}_{12} \cap \mathsf{\Lambda}_{45}$, i.e., $y_2 \in \mathsf{\Lambda}_{14}$, resulting in a contradiction. This case does not exist, and hence $(|\mathsf{\Lambda}_{14}| + |\mathsf{\Lambda}_{25}|)-(|\mathsf{\Lambda}_{12}| + |\mathsf{\Lambda}_{45}|) > 0$ always holds, contradicting the choice of $F$. 

Therefore $F$ satisfies Hall's condition, and by Theorem~\ref{hall-thm}, there exists a matching in $F$ that saturates all intended pairs. Consequently, there is a cycle in $G$ covering $X$.
\qed
 
\section{Degree bounds on dHp graphs}\label{section:degree-bound-on-dhp}

We will begin with the very short proof of the lower bound on the maximum degree in a dHp graph.

\begin{proof}[Proof of Theorem~\ref{thm:dhp-maximum-degree-bound}]
Take an arbitrary vertex $x \in X$. Such a vertex $x$ has neighbors $y_1, y_2, \dots, y_k$ with $k \leq d$. So there are at most $k(d-1) \leq d(d-1)$ edges from $y_1, \dots, y_k$ to the other vertices in $X$. However, for any $x' \in X\setminus \{x\}$, $|\mathsf{\Lambda}^2(\{x, x'\})| = |\mathsf{\Lambda}(x) \cap \mathsf{\Lambda}(x')| \geq 2$, i.e., each other vertex in $X$ must receive at least two of those edges. Since these edges can be partitioned into at most $\frac{d(d-1)}{2}$ subsets, each of which contains at least $2$ edges, there can be at most $\frac{d(d-1)}{2}$ vertices in $X\setminus \{x\}$, implying that $n \le \frac{d(d-1)}{2}+1=\binom d2 + 1$. It follows that $d \ge \frac12(1 + \sqrt{8n-7})$, which is at least $\sqrt{2n}$ for $n \ge 2$. 
\end{proof}

Next, we consider when we can match or approach this lower bound with a construction.

\subsection{Constructions via biplanes}\label{sec:biplanes}

Analyzing the proof above, we can see that this bound is tight when $n = \binom d2 + 1$. The general upper bound on $n=|X|$ is
\[
    |X| \le 1 + \frac12\sum_{y \in \mathsf{\Lambda}(x)} (\deg(y)-1)
\] 
which can only be equal to $\binom d2 + 1$ if $x$ and all its neighbors have degree $d$. Since $x$ is arbitrary, it follows that the graph $G$ must be $d$-regular (with the possible exception of isolated vertices in $Y$, which we ignore). Moreover, to have exactly $\binom d2$ other vertices in $X$, we need to have $|\mathsf{\Lambda}(x) \cap \mathsf{\Lambda}(x')| = 2$ for any $x' \ne x$: any two vertices in $X$ have exactly two common neighbors in $Y$. 

This means that $G$ has exactly $\binom n2$ cycles of length $4$: one for every pair of vertices in $X$. Since $G$ is regular, $|Y|=n$, so there are also $\binom n2$ pairs of vertices in $Y$, meaning that every pair of vertices in $Y$ is part of an average of one $4$-cycle in $G$: they have an average of two common neighbors in $X$. On the other hand, no two vertices $y_1, y_2 \in Y$ can have three common neighbors $x_1, x_2, x_3 \in X$; since no two of the vertices $x_1, x_2, x_3$ can have more common neighbors than $y_1, y_2$, we would have $\mathsf{\Lambda}^2(\{x_1, x_2, x_3\}) = \{y_1, y_2\}$, violating the double Hall property. Therefore any two vertices in $Y$ have exactly two common neighbors in $X$.

A graph with these properties is exactly the incidence graph of a symmetric block design with parameters $(v,k,\lambda) = (1 + \binom d2, d, 2)$: a block design with $1+\binom d2$ points and $1+\binom d2$ lines such that every pair of points lies on exactly two common lines, while every two lines intersect in exactly two points. A symmetric block design with $\lambda=2$ is known as a \textit{biplane}; the parameter $k-\lambda = d-2$ is called its \textit{order}.

Conversely, given a biplane, we can consider its incidence graph: the $(X,Y)$-bigraph where $X$ is the set of points, $Y$ is the set of lines, with an edge $xy$ exactly when point $x$ lies on line $y$. 

\begin{lemma}\label{lemma:biplane}
The incidence graph of a biplane always satisfies the double Hall property.
\end{lemma}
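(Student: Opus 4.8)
The plan is to verify the double Hall property directly from the defining parameters of a biplane. Let $G = (X, Y)$ be the incidence graph of a biplane with parameters $(v, k, \lambda) = (1 + \binom{d}{2}, d, 2)$, so that $|X| = |Y| = v$, the graph is $d$-regular, and any two distinct points of $X$ lie on exactly two common lines (equivalently, $|N(x) \cap N(x')| = 2$ for all distinct $x, x' \in X$). We must show $|N^2(A)| \ge |A|$ for every $A \subseteq X$ with $|A| \ge 2$. The idea is a double-counting argument on the bipartite subgraph between $A$ and $N(A) = \bigcup_{x \in A} N(x)$, comparing the number of incidences in two ways: from the $X$-side it is exactly $d|A|$, and from the $Y$-side it is $\sum_{y \in N(A)} |N(y) \cap A|$.

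First I would handle the case $|A| = 2$ separately, since it is immediate: if $A = \{x, x'\}$ then $N^2(A) = N(x) \cap N(x')$ has size exactly $2 = |A|$. So assume $|A| = a \ge 3$. Partition $N(A)$ according to how many vertices of $A$ a line meets: let $N_j = \{ y \in N(A) : |N(y) \cap A| = j \}$ for $j \ge 1$, so $N^2(A) = \bigcup_{j \ge 2} N_j$. Counting incidences between $A$ and $N(A)$ gives $\sum_{j \ge 1} j |N_j| = d a$. Counting pairs $\{x, x'\} \subseteq A$ together with a common neighbor gives $\sum_{j \ge 1} \binom{j}{2} |N_j| = 2 \binom{a}{2} = a(a-1)$, because each of the $\binom{a}{2}$ pairs in $A$ has exactly two common neighbors. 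Writing $|N^2(A)| = \sum_{j \ge 2} |N_j|$, I want to bound this below using these two identities. The key inequality is that for $j \ge 2$ we have $j \le \binom{j}{2} + 1$, i.e. $\binom{j}{2} \ge j - 1$, so $|N_j| \le \binom{j}{2} |N_j| - (j-1)|N_j| + |N_j|$... more usefully: $\sum_{j \ge 2} |N_j| \le \sum_{j \ge 2} \binom{j}{2} |N_j| - \sum_{j \ge 2} (j - 2)|N_j| \le \sum_{j \ge 1} \binom{j}{2}|N_j| = a(a-1)$ in the wrong direction; instead I should get a lower bound. From $\sum_{j \ge 2} \binom j 2 |N_j| = a(a-1)$ and the bound $\binom j 2 \le \binom{\Delta}{2}$ where $\Delta = \max_y |N(y) \cap A| \le \min(d, a)$, one gets $|N^2(A)| = \sum_{j\ge 2}|N_j| \ge \frac{a(a-1)}{\binom{\Delta}{2}}$; when $\Delta \le a$ this is at least $\frac{a(a-1)}{\binom a 2} = 2 > $ nothing useful. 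The cleaner route: use that any two lines of the biplane meet in exactly two points, which dualizes the incidence structure, so I would instead argue on the complement or use the symmetric-design property that the line set restricted to $A$ cannot be too concentrated. Concretely, since $|N(y) \cap A| \le d$ and there is no pair of lines sharing three points of $A$ unless forced, the honest finish is: $\binom{\Delta}{2} \le \binom{d}{2} = v - 1 < v = |Y|$, hence... I will work out the precise chain, but the engine is the two counting identities $\sum j|N_j| = da$ and $\sum \binom j2 |N_j| = a(a-1)$ combined with $a \le v = \binom d2 + 1$.

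The step I expect to be the main obstacle is extracting the lower bound $\sum_{j \ge 2} |N_j| \ge a$ cleanly from the two moment identities without a parameter blowing up — in particular making sure the argument does not secretly require $a$ to be large or $\Delta$ to be small. I anticipate the slick resolution uses the \emph{dual} design property symmetrically: a biplane is a symmetric $2$-design, so its dual is also a $2$-design with the same parameters, and this lets one bound $\Delta = \max_{y} |N(y) \cap A|$ or, better, directly count $N^2(A)$ by noting that the lines missing or meeting $A$ in $\le 1$ point are few. If the moment method stalls, the fallback is to invoke Theorem~\ref{thm:dhp-maximum-degree-bound}'s proof in reverse: we already observed that equality in that bound characterizes exactly the incidence graphs of biplanes, and the $d$-regularity plus the ``exactly two common neighbors'' condition were derived as \emph{equivalent} to being such an incidence graph — so the double Hall property for all $|A| \ge 2$ should follow by retracing that equivalence, checking that the counting bound $|X| \le 1 + \frac12 \sum_{y \in N(x)}(\deg y - 1)$ applied within $G[A \cup N(A)]$ yields $|A| \le 1 + \binom{d}{2}$ automatically while the pairwise-two-common-neighbors condition forces $|N^2(A)| \ge |A|$. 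I would write the final proof along whichever of these two lines is shortest, most likely the direct double count.
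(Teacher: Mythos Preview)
Your setup is on the right track and in fact contains exactly one half of the paper's proof: the second moment identity $\sum_{j\ge 2}\binom{j}{2}|N_j| = a(a-1)$ together with $\binom{j}{2} \le \binom{\Delta}{2}$ gives $|N^2(A)| \ge a(a-1)/\binom{\Delta}{2}$, and this is $> a$ precisely when $\binom{\Delta}{2} \le a-2$. You correctly diagnose that this breaks when $\Delta$ is large relative to $a$, and that the dual property is what is needed --- but you have not found the concrete argument, and neither of your proposed fallbacks works. (In particular, reversing Theorem~\ref{thm:dhp-maximum-degree-bound} cannot succeed: that theorem derives an upper bound on $|X|$ \emph{assuming} the double Hall property, so it gives no information in the other direction.)

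The missing idea is this. Pick $y \in Y$ attaining $\Delta = |N(y)\cap A|$, and look at the $\binom{\Delta}{2}$ pairs of points of $A$ lying on $y$. Each such pair has a second common line besides $y$; the crucial claim is that these second lines are all \emph{distinct}. If two pairs $\{x_1,x_2\}$ and $\{x_1',x_2'\}$ shared a second common line $y'$, then $y$ and $y'$ would meet in at least three points, contradicting the dual biplane property that any two lines intersect in exactly two points. Hence $|N^2(A)| \ge \binom{\Delta}{2} + 1$, which handles exactly the complementary range $\binom{\Delta}{2} + 1 \ge a$. Combined with your averaging bound for the range $\binom{\Delta}{2} \le a - 2$, the proof is complete. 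Your first moment identity $\sum j|N_j| = da$ turns out to be unnecessary.
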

\begin{proof}
Take an arbitrary $S \subseteq X$ with $|S| \geq 2$. Let $y \in Y$ be a vertex maximizing $\deg_S(y)$, and let $d = \deg_S(y)$. Since every pair of points lies on exactly two common lines, we know $d \geq 2$, and all $\binom d2$ pairs of neighbors of $y$ in $S$ must have another common neighbor. Furthermore, these common neighbors cannot repeat, since that would result in two lines intersecting in three or more points, contradicting the property of the biplane. Therefore, together with $y$, there are at least $\binom d2+1$ vertices in $\mathsf{\Lambda}^2(S)$. So the double Hall property holds for $S$ if $\binom d2 + 1 \geq |S|$.

Otherwise, suppose $\binom d2 \leq |S|-2$. Since every pair of vertices in $S$ has exactly two common neighbors in $Y$, there are a total of $|S|(|S|-1)$ paths of length $2$ beginning and ending in $S$. Every midpoint of such a path is in $\mathsf{\Lambda}^2(S)$, and every vertex in $Y$ can be the midpoint of at most $\binom d2$ such paths. Therefore
\[
    |\mathsf{\Lambda}^2(S)| \geq \frac{|S|(|S|-1)}{\binom d2} \geq \frac{|S|(|S|-1)}{|S|-2} > |S|.
\]
In either case, we have $|\mathsf{\Lambda}^2(S)| \geq |S|$, completing the proof.
\end{proof}

There are only finitely many biplanes known, which give us the known examples where the bound of Theorem~\ref{thm:dhp-maximum-degree-bound} is tight. Marshall Hall in~\cite{book:biplane} lists several known examples of biplanes of orders $0, 1, 2, 3, 4, 7, 9, 11$, which correspond to regular graphs of degrees $2, 3, 4, 5, 6, 9, 11, 13$, respectively. Furthermore, the regular graphs of degrees $2, 3, 4$, and $5$ are each unique up to isomorphism. Notable special cases include the order-$1$ biplane, whose incidence graph is the cube graph, and the order-$2$ biplane, whose incidence graph is the bipartite complement of the incidence graph of the Fano plane.

We are most interested in order-$11$ biplanes, which give us $13$-regular dHp graphs with $79$ vertices in $X$ and $79$ vertices in $Y$.

\subsection{Bipartite product construction}\label{sec:bipartite-product} 

To bootstrap from the excellent but small biplane examples, we define a bipartite product that we show preserves the double Hall property.

\begin{definition}
Let $G=(X, Y)$ be a bipartite graph, and let $G' = (X', Y')$ be a bipartite graph. Then their bipartite product $G \times G'$ is defined as the bipartite graph with partitions $X \times X'$ and $Y \times Y'$, where vertices $(x,x')$ and $(y, y')$ are adjacent if and only if $xy \in E(G)$ and $x'y' \in E(G')$.
\end{definition}

\begin{lemma}\label{lemma:bipartite-product}
Suppose $G$ and $G'$ both satisfy the double Hall property. Then $G \times G'$ also satisfies the double Hall property.
\end{lemma}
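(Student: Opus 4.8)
I want to show that for any $S \subseteq X \times X'$ with $|S| \ge 2$, we have $|N^2_{G \times G'}(S)| \ge |S|$. The natural thing is to project $S$ onto the two coordinates and exploit the double Hall property of $G$ and $G'$ separately. Write $A = \{x \in X : (x,x') \in S \text{ for some } x'\}$ for the projection of $S$ to the first coordinate, and for each $x \in A$ let $S_x = \{x' \in X' : (x,x') \in S\}$ be the corresponding ``fiber'' in $X'$. The key observation linking the product structure to neighborhoods is: a vertex $(y,y') \in Y \times Y'$ lies in $N^2_{G\times G'}(S)$ if it has two distinct neighbors in $S$, and two cases arise — either the two neighbors lie in a common fiber $\{x\}\times S_x$ (so $y \in N(x)$ in $G$ and $y'$ has two neighbors in $S_x$ in $G'$), or they lie in different fibers (so $y$ has two neighbors in $A$ in $G$, and $y'$ is adjacent in $G'$ to at least one point of each of the two relevant fibers). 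I expect the cleanest route is to split on whether some single fiber is large.

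\textbf{Case 1: some fiber is large.} Suppose there is an $x_0 \in A$ with $|S_{x_0}| \ge 2$ and, say, $|S_{x_0}|$ maximum. Apply the double Hall property of $G'$ to $S_{x_0}$: there are at least $|S_{x_0}|$ vertices $y' \in Y'$ with two neighbors in $S_{x_0}$. For each such $y'$, and for each $y \in N_G(x_0)$ — of which there are $\ge 2$ since $G$ is dHp (actually $\deg(x_0) \ge 2$ follows from applying dHp to any pair containing $x_0$, or more simply $N^2(\{x_0,x_1\}) \ne \emptyset$) — the vertex $(y,y')$ has two neighbors $(x_0, a), (x_0, b) \in S$ in the product, hence $(y,y') \in N^2(S)$. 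This already gives $|N^2(S)| \ge 2|S_{x_0}|$. If $|S_{x_0}| \ge |S|/2$ we are done; but in general fibers can be small, so I need the other case.

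\textbf{Case 2: all fibers are small (size $1$), i.e.\ $S$ is a ``partial graph of a function'' $A \to X'$.} Then $|S| = |A| \ge 2$, so we may apply the double Hall property of $G$ to $A$: there are at least $|A|$ vertices $y \in Y$ with two neighbors in $A$. Fix such a $y$ with neighbors $x_1, x_2 \in A$, and let $x_1', x_2'$ be their (unique) partners in $S$. By dHp applied to $\{x_1', x_2'\}$ in $G'$ (or, if $x_1' = x_2'$, just using that $x_1'$ has a neighbor), there is at least one $y' \in Y'$ adjacent to both $x_1'$ and $x_2'$; then $(y,y')$ has the two neighbors $(x_1,x_1'),(x_2,x_2')$ in $S$, so $(y,y') \in N^2(S)$. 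The map sending each such $y$ to a choice of $(y,y')$ is injective in the first coordinate, giving $|N^2(S)| \ge |A| = |S|$. The mixed case — some but not all fibers have size $\ge 2$ — should be handled by combining the two ideas: replace $S$ by a subset $S'$ that is either a union of full fibers all of size $\ge 2$ (reducing to Case 1 with $|S'|$ possibly smaller) or keep one representative per fiber (reducing to Case 2), and argue the bound for the larger of the two pieces transfers; alternatively, note that I can always pass to the sub-configuration and that $N^2$ is monotone under $S' \subseteq S$, then take whichever of $\{$one-point-per-fiber$\}$ or $\{$a single largest fiber, doubled by $N_G(x_0)\}$ has at least $|S|$ elements.

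\textbf{Main obstacle.} The delicate point is the bookkeeping in the mixed case: ensuring the vertices of $N^2(S)$ produced by the ``within a fiber'' mechanism and by the ``across fibers'' mechanism are counted without conflict, and that whichever subfamily I restrict to still has size at least $|S|$. I would handle this by a clean dichotomy: let $t$ be the number of fibers of size $\ge 2$; if the fibers of size $\ge 2$ account for at least half of $S$, restrict to their union and run Case 1 (using that $\deg_G$ of any vertex of $A$ is $\ge 2$ to get the factor-$2$ blow-up, plus $N^2$-monotonicity to descend to the restricted $S$); otherwise the singleton fibers account for more than half, restrict to one point per fiber from $A$ (now $|A| \ge |S|/2$ won't suffice alone) — so in fact the sharp version is to take $S^* = $ (one point from each fiber) which has $|S^*| = |A|$, and separately $S^{**} = \{x_0\} \times S_{x_0}$; since every $(x,x') \in S$ lies in a fiber, $\sum_{x \in A}|S_x| = |S|$, so $\max\{|A|, \max_x |S_x|\} \ge \sqrt{|S|}$, which is not enough. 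The correct resolution, which I would carry out carefully, is to run Case 1's argument not for the single largest fiber but summed appropriately, or—better—to handle the fully general $S$ directly: apply dHp of $G'$ to each $S_x$ with $|S_x|\ge 2$ and dHp of $G$ to $A$, and show the union of all the resulting product-vertices has size $\ge |S|$ by a counting argument on pairs (each pair in $S$ from a common fiber, or from two fibers), mirroring the ``$|S|(|S|-1)/\binom d2$'' double-counting used in Lemma~\ref{lemma:biplane}. That pair-counting step is where the real work lies, and it is what I would write out in full.
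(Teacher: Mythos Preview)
Your setup is exactly the paper's: project $S$ onto the first coordinate (the paper calls this $T$, you call it $A$) and study the fibers $S_x$. Your Case~2 (all fibers singletons) is correct and matches the paper's spirit, and your Case~1 handles the extreme $|A|=1$. But the mixed case is a genuine gap, and you essentially say so yourself: the dichotomy you sketch does not close, since Case~1 only yields $2\max_x|S_x|$ rather than $2\sum_x|S_x|$ (the vertices produced for different fibers $x$ can collide when some $y\in Y$ lies in $N_G(x_1)\cap N_G(x_2)$), while Case~2 only yields $|A|$; neither half of your dichotomy gives $|S|$. The pair-counting idea you gesture at is left unspecified, and it is not clear how a biplane-style double count would control $|N^2(S)|$ here without further structure.

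The paper resolves this with one outside ingredient you are missing: Theorem~\ref{thm:hungarian-cycle-cover} (Bar\'at et al.), which says that any dHp bigraph has a collection of disjoint cycles covering any prescribed subset of $X$. Applying this to the projection $T$ gives, after orienting the cycles, an assignment $t\mapsto y^+(t)$ of \emph{distinct} $Y$-vertices to the fibers, with $y^+(t)$ adjacent in $G$ to both $t$ and the next vertex $x^+(t)\in T$. Now the slices $\{y^+(t)\}\times Y'$ are pairwise disjoint, so there is no overlap to worry about: for $|S(t)|\ge 2$ you run your Case~1 inside that single slice to get $|S(t)|$ witnesses, and for $|S(t)|=1$ you use the edge to $x^+(t)$ to find one witness in that slice (exactly your Case~2 mechanism, but with the specific $y=y^+(t)$). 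Summing over $t$ gives $|S|$ on the nose. The cycle cover is precisely what lets you ``sum Case~1 over all fibers'' without collisions, and simultaneously handles the singleton fibers because each $y^+(t)$ already has two neighbors in $T$.
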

\begin{proof}
Consider an arbitrary subset $S \subseteq X \times X'$ with $|S| \geq 2$. Our goal is to find at least $|S|$ vertices in $Y \times Y'$ with at least two neighbors in $S$. Let $T \subseteq X$ be the projection of $S$ onto $X$: the set of all $x \in X$ such that there is some $x' \in X'$ with $(x,x') \in S$. 

If $|T| = 1$, then $S$ is a subset of $\{x\} \times X'$ for some $x \in X$. Let $y$ be any neighbor of $x$ in $G$. Then the edges of $G \times G'$ between $\{x\} \times X'$ and $\{y\} \times Y'$ form a copy of $G'$. Since this copy of $G'$ satisfies the double Hall property, we know $|\mathsf{\Lambda}^2(S)| \ge |S|$.

If $|T| \ge 2$, then by Theorem~\ref{thm:hungarian-cycle-cover}, there exists a collection of disjoint cycles in $G$ whose union covers $T$. Pick an arbitrary orientation for each of those cycles. For each vertex $t \in T$, let:
\begin{itemize}
\item $y^+(t)$ be the vertex in $Y$ that follows $t$ on its cycle.
\item $x^+(t)$, accordingly, be the vertex in $X$ that follows $y^+(t)$ on its cycle.
\item $S(t) = S \cap (\{t\} \times X')$, so that the sets $S(t)$ for $t \in T$ form a partition of $S$.
\end{itemize}

For each $t$ such that $|S(t)| \ge 2$, the edges between $\{t\} \times X'$ and $\{y^+(t)\} \times Y'$ form a copy of $G'$. By the double Hall property in that copy of $G'$, there are at least $|S(t)|$ vertices in $\{y^+(t)\} \times Y'$ with at least two neighbors in $S(t)$.

For each $t$ such that $|S(t)| = 1$, let $(t, u)$ be the unique element of $S(t)$, and let $(x^+(t), v)$ be an arbitrary element of $S(x^+(t))$. Let $y \in Y'$ be a common neighbor of $u$ and $v$ in $G'$. Note that if $u \neq v$, then such $y$ exists by applying the double Hall property to $\{u, v\}$ in $G'$; if $v = u$, then just let $y$ be any neighbor of $u$. Then $(y^+(t), y)$ is a common neighbor of $(t,u)$ and $(x^+(t),v)$, so $(y^+(t),y) \in \mathsf{\Lambda}^2(S)$.

Thus, for each $t \in T$, we can find $|S(t)|$ vertices in $\{y^+(t)\} \times Y'$ that have at least two neighbors in $S$, and by putting these together, we learn that $|\mathsf{\Lambda}^2(S)| \ge |S|$.

In both cases, we have $|\mathsf{\Lambda}^2(S)| \ge |S|$, and therefore $G \times G'$ satisfies the double Hall property.
\end{proof}

Let $G$ be the incidence graph of an order-$11$ biplane. Then $G$ has the double Hall property by Lemma~\ref{lemma:biplane}, and by Lemma~\ref{lemma:bipartite-product}, this is also true of the iterated product $G^{\times k}$ for all $k\ge 1$.

This iterated product is an $(X,Y)$-bigraph with $|X|=|Y|=79^k$; it is regular of degree $d = 13^k$. Setting $n = 79^k$, we get $d = n^{\alpha}$ for $\alpha = \log_{79} 13 \approx 0.587$. Together with the small-$n$ constructions arising directly as the incidence graphs of biplanes, this completes the proof of Theorem~\ref{thm:dhp-maximum-degree-construction}.

\section{The double Hall property in random graphs}\label{section:dHp-random-graph}

To prove Theorem~\ref{thm:gnnp-is-dhp}, we follow the same overall approach as used by Erd\H{o}s and R\'enyi in \cite{erdosrenyi1964} (see section 6.1 of \cite{friezekaronski2015} for a modern presentation). 

We first consider the simplest obstacle to the double Hall property: pairs of vertices in $X$ with fewer than two common neighbors in $Y$. For these, we use the method of moments (see Theorem~23.11 in \cite{friezekaronski2015}) to prove that the number of these pairs follows a Poisson distribution.

\begin{lemma}\label{lemma:poisson-pairs}
Let $p = \sqrt{(2\log n + \log \log n + c)/n}$ for a constant $c$. In the random bipartite graph $\mathbb G_{n,n,p}$, let $\mathbf N$ be the number of pairs $\{x_1, x_2\} \subseteq X$ with at most one common neighbor in $Y$. Then $\mathbf N$ converges in distribution to a Poisson random variable with rate $e^{-c}$: for all $k \ge 0$, as $n \to \infty$, $\Pr[\mathbf N = k] \to e^{-e^{-c}} \cdot \frac{e^{-ck}}{k!}$.
\end{lemma}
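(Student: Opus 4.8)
The plan is to use the method of moments: I will show that for each fixed $r\ge 1$, the $r$-th factorial moment $\mathbb E[(\mathbf N)_r] = \mathbb E[\mathbf N(\mathbf N-1)\cdots(\mathbf N-r+1)]$ converges to $\lambda^r$ where $\lambda = e^{-c}$, which by Theorem~23.11 of~\cite{friezekaronski2015} forces $\mathbf N$ to converge in distribution to a Poisson random variable with rate $\lambda$ and in particular gives the stated formula for $\Pr[\mathbf N = k]$.

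First I would compute the first moment. For a fixed pair $\{x_1,x_2\}\subseteq X$, the probability that a given $y\in Y$ is a common neighbor is $p^2$, so the number of common neighbors is $\mathrm{Bin}(n,p^2)$, and the probability of having at most one common neighbor is $q := (1-p^2)^n + np^2(1-p^2)^{n-1}$. Substituting $np^2 = 2\log n + \log\log n + c$ and using $(1-p^2)^n = e^{-np^2(1+o(1))}$ (valid since $p^2 = o(1)$, indeed $p = O(\sqrt{\log n / n})$), the dominant term is $np^2(1-p^2)^{n-1} \sim np^2 e^{-np^2} \sim (2\log n)\cdot \frac{e^{-c}}{n^2\log n} = \frac{2e^{-c}}{n^2}$; the first term $(1-p^2)^n\sim \frac{e^{-c}}{n^2\log n}$ is negligible by comparison. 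Since there are $\binom n2 \sim \frac{n^2}{2}$ pairs, $\mathbb E[\mathbf N] = \binom n2 q \to e^{-c} = \lambda$. I should double-check the asymptotics of $1-p^2$ versus $e^{-p^2}$ carefully here, since we are multiplying an $n$-th power by a factor growing like $\log n$, so the error term $np^4$ in the exponent expansion must be shown to be $o(1)$ — indeed $np^4 = (np^2)\cdot p^2 = O(\log n)\cdot O(\log n/n) = o(1)$, which is fine.

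Next, for the higher factorial moments, $\mathbb E[(\mathbf N)_r] = \sum^* \Pr[\text{all of } \{x_1,x_2\},\dots,\{x_{2r-1},x_{2r}\}\text{ are bad}]$ summed over ordered $r$-tuples of distinct pairs. I would split this sum according to whether the pairs are pairwise disjoint (as vertex sets) or not. For $r$ pairwise disjoint pairs, the events ``pair $i$ has $\le 1$ common neighbor'' are mutually independent (they involve disjoint sets of potential edges), so the contribution is $\big(\binom n2\big)_r \cdot q^r \cdot (1+o(1)) \to \lambda^r$ — here I use that the number of such ordered tuples is $\binom n2 (\binom n2 - \text{(pairs meeting a fixed pair)})\cdots \sim (\binom n2)^r$. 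For the non-disjoint configurations, I would bound their total contribution by $o(1)$: if two pairs share a vertex, forming a configuration on $\le 2r-1$ vertices of $X$, the number of such tuples is $O(n^{2r-1})$, while the probability each is bad is still $O(q^{(\text{something})})$; the key point is that each "bad pair" event costs a factor roughly $\frac{1}{n^2}$ (up to logs), so a configuration on $m < 2r$ distinct $X$-vertices involving $r$ pairs gets probability $O(n^{-2r+o(1)})$ while there are only $O(n^{m}) = O(n^{2r-1})$ of them, for a total of $O(n^{-1+o(1)}) = o(1)$. One subtlety: when two bad pairs share a vertex $x$, the two events are positively correlated but still I can bound $\Pr[A\cap B]\le \min(\Pr[A],\Pr[B])$ or, better, bound it by $\Pr[A]\cdot(\text{conditional prob of }B)$ and note the conditional probability is still $O(n^{-2+o(1)})$ since $B$ constrains edges at two vertices of $X$ at least one of which is fresh. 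I would organize this as: contribution from disjoint tuples $\to \lambda^r$, contribution from non-disjoint tuples $\to 0$, hence $\mathbb E[(\mathbf N)_r]\to \lambda^r$.

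The main obstacle I anticipate is controlling the error terms uniformly — specifically making sure that the approximation $q \sim 2e^{-c}/n^2$ has a multiplicative error $1+o(1)$ that remains $o(1)$ even after being raised to the $r$-th power and multiplied by $(\binom n2)^r \sim (n^2/2)^r$, and similarly that the non-disjoint terms really are negligible rather than contributing a lower-order but nonzero amount. This is a routine but delicate calculation: the factor-of-$\log n$ discrepancies between $np^2$, $e^{-np^2}$, and the various powers of $p$ need to be tracked, and one must confirm $np^4 = o(1)$ and that the "second common neighbor" correction and the pure $(1-p^2)^n$ term are both lower order. I expect no conceptual difficulty beyond this bookkeeping, since the structure is exactly the classical Erd\H{o}s--R\'enyi connectivity-threshold argument adapted to pairs-with-few-common-neighbors.
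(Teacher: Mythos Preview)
Your approach is correct and is essentially the paper's: both arguments compute the factorial moments of (a version of) $\mathbf N$ and invoke the method of moments. The packaging differs slightly. The paper first writes $\mathbf N = \mathbf N_0 + \mathbf N_1$ (pairs with zero versus exactly one common neighbor), disposes of $\mathbf N_0$ by a first-moment bound, and then computes the factorial moments of $\mathbf N_1$ by summing over ordered $t$-tuples of \emph{triples} $(x_{i1},x_{i2},y_i)$ with $y_i$ the designated unique common neighbor; overlapping configurations are handled by counting edges in the auxiliary graph $H$ of forced incidences and bounding the ``no other common neighbor'' probability via Bonferroni's inequalities. Your version works directly with $\mathbf N$ and $r$-tuples of pairs, and your observation that for vertex-disjoint pairs the bad events are genuinely independent (they depend on disjoint edge sets) makes the main term cleaner than the paper's computation. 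Conversely, the paper's triple-and-$H$ bookkeeping is tighter for the overlap terms: your claim that $r$ bad pairs on $m<2r$ vertices of $X$ occur with probability $O(n^{-2r+o(1)})$ is true, but it needs an actual argument (e.g.\ conditioning on the shared vertex's neighborhood and using concentration, or the paper's Bonferroni bound over all $y$) rather than just the inequality $\Pr[A\cap B]\le \Pr[A]\cdot\Pr[B\mid A]$ together with an unproved estimate on the conditional factor.
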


This will establish the correctness of Theorem~\ref{thm:gnnp-is-dhp} at the threshold value of $p$ once we prove that with high probability (that is, with probability tending to $1$ as $n \to \infty$), no other obstacles to the double Hall property occur at the threshold value. We must be precise in what we mean by ``other obstacles'', since a pair $\{x_1, x_2\}$ of the type in Lemma~\ref{lemma:poisson-pairs} might be the root cause of a larger subset of $X$ that has too few common neighbors. 

To this end, we define an \emph{obstacle} (to the double Hall property) to be  a pair $(S,T)$ with $S \subseteq X$ and $T \subseteq Y$ such that $|S| > |T|$, $|S|\ge 2$, and $T$ contains all vertices of $Y$ with at least two neighbors in $S$. We define an obstacle $(S,T)$ to be \emph{minimal} if there is no obstacle $(S',T')$ with $S' \subseteq S$, $T' \subseteq T$, and $|S'| + |T'| < |S|+|T|$.

\begin{lemma}\label{lemma:no-big-obstacles}
Let $p = \sqrt{(2 \log n)/n}$. In the random bipartite graph $\mathbb G_{n,n,p}$, with high probability, there are no minimal obstacles $(S,T)$ with $|S| \ge 3$.
\end{lemma}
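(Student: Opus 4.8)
The plan is a first-moment (union bound) argument over all \emph{minimal} obstacles $(S,T)$ with $s := |S| \ge 3$. First I would record the basic structural constraints forced by minimality. If $(S,T)$ is a minimal obstacle with $|S| > |T|$, then in fact $|T| = |S| - 1$ (otherwise one could delete a vertex of $S$ and keep an obstacle), so $t := |T| = s-1$. Moreover, each vertex $x \in S$ must have at least two neighbors in $T$: if some $x \in S$ had at most one neighbor in $T$, then $(S \setminus \{x\}, T)$ would still capture all of $Y$ with $\ge 2$ neighbors in $S \setminus \{x\}$ (deleting $x$ only removes incidences) and would have $|S|-1 > |T|-0 \ge |T|-1$ wait — more carefully, $(S\setminus\{x\}, T)$ has $|S|-1 \ge |T|$ with $|S|-1 \ge 2$ only when $s \ge 3$, and it is an obstacle unless $|S|-1 = |T|$, i.e. it is an obstacle when $s \ge 4$; the $s=3$ boundary case I would check by hand using that such an $x$ has $\le 1$ neighbor in $T$ and hence a pair in $S$ with $\le 1$ common neighbor, which is handled by Lemma~\ref{lemma:poisson-pairs}'s regime separately or absorbed. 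The key usable consequence is: \emph{the bipartite graph between $S$ and $T$ has at least $2s$ edges}, and additionally no vertex of $Y \setminus T$ has two neighbors in $S$.

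Next I would set up the expectation bound. Fix $s$ with $3 \le s \le n$ and $t = s-1$. The number of ways to choose $S$ and $T$ is $\binom ns \binom n{t} \le n^{2s-1}$. For a fixed choice, I need the probability that (a) $G[S,T]$ has $\ge 2s$ edges \emph{and} (b) every vertex of $Y \setminus T$ has $\le 1$ neighbor in $S$. Actually, for an upper bound it suffices to use a weaker event. Rather than demanding $G[S,T]$ has many edges directly, I would bound the probability of the event that \emph{all} of the $\binom s2$ pairs in $S$ have their common neighbors confined to $T$, i.e. for each of the roughly $s \cdot (n - t)$ potential ``extra'' second-neighbor incidences outside $T$ the corresponding configuration is absent. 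Concretely: the event ``no vertex of $Y \setminus T$ has $\ge 2$ neighbors in $S$'' has probability $\left(1 - \bigl(1 - (1-p)^s - sp(1-p)^{s-1}\bigr)\right)^{n-t} \le \exp\!\bigl(-(n-t)\cdot \Theta(s^2 p^2)\bigr)$, since $1-(1-p)^s-sp(1-p)^{s-1} \sim \binom s2 p^2$ for $p \to 0$. With $p^2 = (2\log n)/n$ and $n - t = n - s + 1 \ge n/2$ (we may assume $s \le n/2$; the range $s > n/2$ is a negligible or separately handled tail, or one uses $n-t \ge 1$ trivially only for $s$ near $n$, where $\binom ns$ is small again), this probability is at most $\exp(-\Theta(s^2 \log n)) = n^{-\Theta(s^2)}$.

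Then the expected number of minimal obstacles of size $s$ is at most $n^{2s-1} \cdot n^{-c' s^2}$ for some absolute constant $c' > 0$ (coming from the $\binom s2 p^2 \cdot (n-t)$ calculation, with the exact constant needing a clean lower bound on $\binom s2 p^2(n-t)/\log n \ge c' s^2$). For $s \ge 3$ the exponent $2s - 1 - c' s^2$ is negative and bounded away from $0$ — in fact it is $\le -\delta s$ for some $\delta > 0$ once the constants are pinned down — so summing over $3 \le s \le n$ gives a geometric-type series tending to $0$. By Markov's inequality, with high probability there are no minimal obstacles with $|S| \ge 3$.

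\medskip

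\noindent\textbf{Main obstacle.} The delicate point is getting the constant in the exponent right so that $n^{2s-1}\cdot \Pr[\cdots]$ is summably small \emph{uniformly in $s$}, including both the small cases $s = 3, 4, 5$ (where $2s-1$ is not much smaller than $c's^2$ and one must verify the inequality numerically/carefully) and the large-$s$ regime near $s = n/2$ and $s > n/2$ (where $n - t$ is no longer $\Theta(n)$, so one needs a different but easier bound — e.g. $\binom ns$ itself becomes small, or one bounds the edge count in $G[S,T]$ directly). I would also need to make sure the approximation $1 - (1-p)^s - sp(1-p)^{s-1} \ge c\, s^2 p^2$ holds with an explicit $c$ in the full range of $s$ up to, say, $1/p$, beyond which $(1-p)^s$ is tiny and the bound is even stronger. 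Handling these edge ranges cleanly, rather than the generic middle range, is where the real care lies.
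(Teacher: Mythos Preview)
Your first-moment strategy over minimal obstacles with $|T|=|S|-1$ is the paper's approach. Two differences are worth flagging.

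First, the structural claim you attempt---that each $x\in S$ has $\ge 2$ neighbors in $T$---does not follow from minimality (deleting such an $x$ gives $|S'|=|T|$, not $|S'|>|T|$), and you rightly back away from it. The paper instead proves the dual statement: each $y\in T$ has at least \emph{three} neighbors in $S$ (if $y$ has only neighbors $x_1,x_2\in S$, delete $x_1$ from $S$ and $y$ from $T$ to get a smaller obstacle). This yields $\ge 3(s-1)$ edges between $S$ and $T$, contributing a factor $\binom{s(s-1)}{3(s-1)}p^{3(s-1)}$ that the paper uses in its small-$s$ case. You forgo this and rely instead on the sharp asymptotic $1-P_s\sim\binom{s}{2}p^2$ rather than the paper's cruder bound $P_s\le e^{-(s-1)p^2}$; your route does work, but at $s=3$ it gives only $\binom{n}{3}\binom{n}{2}\cdot n^{-6(1+o(1))}=n^{-1+o(1)}$, so the constant hidden in your $\Theta(s^2p^2)$ must be tracked exactly, whereas the paper's extra edge factor buys comfortable slack.

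Second, the ``main obstacle'' you name---uniformity over all $s$---is real, and your sketch for large $s$ is too thin to count as a proof. The paper resolves it by splitting into three ranges: $3\le s\le n/10$ (the argument above); $32/p\le s\le n-32/p$ (Chernoff: at most $n-s+1$ edges are permitted between $S$ and $Y\setminus T$, against expectation $ps(n-s+1)\ge 2(n-s+1)$, beating the crude $4^n$ count of pairs); and $n-s\le n/(4e)$ (each of the $n-s+1$ vertices of $Y\setminus T$ must place nearly all of its $\gtrsim np/2$ edges into the tiny set $X\setminus S$). Your asymptotic $1-P_s\sim\binom{s}{2}p^2$ breaks down once $sp$ is not small, and ``$\binom{n}{s}$ becomes small'' is not by itself enough near $s\approx n/2$ where $\binom{n}{s}\binom{n}{s-1}$ is of order $4^n$; something like the paper's middle and large cases is genuinely required.
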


Once there are no obstacles $(S,T)$ with $|S|=2$, and no minimal obstacles $(S,T)$ with $|S|\ge 3$, it follows that there are no obstacles at all.

The double Hall property is monotonic in the sense that adding more edges to a dHp graph will always produce another dHp graph. As a result, if Lemma~\ref{lemma:no-big-obstacles} holds when $p = \sqrt{(2 \log n)/n}$, it holds for all larger $p$ as well. 
In particular, we can conclude that when $p = \sqrt{(2\log n + \log \log n + c)/n}$, the conclusion of Theorem~\ref{thm:gnnp-is-dhp} holds. The rest of the theorem follows, once again, by monotonicity, since $\Pr[\mathbb G_{n,n,p} \text{ is dHp}]$ is increasing in $p$.

In the proofs in this section, we apply the following standard Chernoff-type bound:

\begin{lemma}[e.g.\ Corollary 24.7 in~\cite{friezekaronski2015}]\label{lemma:chernoff}
Let $\mathbf X$ be a binomial random variable with mean $\mu$. Then for all $0 \le \delta \le 1$, 
\begin{align*}
    \Pr[\mathbf X \ge (1+\delta)\mu] &\le \exp\left(-\frac13 \delta^2 \mu\right)\\
    \Pr[\mathbf X \le (1-\delta)\mu] &\le \exp\left(-\frac12 \delta^2 \mu\right)
\end{align*}
\end{lemma}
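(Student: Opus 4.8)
\textbf{Proof proposal for Lemma~\ref{lemma:chernoff}.}

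The plan is to derive this from the standard multiplicative Chernoff bounds for sums of independent $\{0,1\}$-valued random variables, which are classical. First I would recall that if $\mathbf X = \sum_{i=1}^N \mathbf X_i$ where the $\mathbf X_i$ are independent indicator variables and $\mu = \mathbb E[\mathbf X]$, then the moment generating function satisfies $\mathbb E[e^{t\mathbf X}] \le \exp\big(\mu(e^t-1)\big)$ for all real $t$; this follows from $\mathbb E[e^{t\mathbf X_i}] = 1 + p_i(e^t-1) \le \exp\big(p_i(e^t-1)\big)$ and independence. Note that a binomial random variable is exactly of this form with $p_i = p$ for all $i$, so it applies directly; the bound actually holds for any such sum, which is why the citation phrases it as a Chernoff-type bound.

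For the upper tail, I would apply Markov's inequality to $e^{t\mathbf X}$ with $t = \log(1+\delta) > 0$, obtaining
\[
    \Pr[\mathbf X \ge (1+\delta)\mu] \le \frac{\mathbb E[e^{t\mathbf X}]}{e^{t(1+\delta)\mu}} \le \exp\Big(\mu\big(\delta - (1+\delta)\log(1+\delta)\big)\Big).
\]
It then remains to check the elementary inequality $(1+\delta)\log(1+\delta) - \delta \ge \tfrac13\delta^2$ for $0 \le \delta \le 1$; this is a one-variable calculus fact (the function $g(\delta) = (1+\delta)\log(1+\delta) - \delta - \tfrac13\delta^2$ satisfies $g(0)=0$, $g'(0)=0$, and $g''(\delta) = \tfrac1{1+\delta} - \tfrac23 \ge 0$ on $[0,1/2]$, with a direct check that $g$ stays nonnegative on the rest of $[0,1]$). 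For the lower tail, I would symmetrically take $t = \log(1-\delta) < 0$, apply Markov's inequality to $e^{t\mathbf X}$, and reduce to the inequality $(1-\delta)\log(1-\delta) + \delta \ge \tfrac12\delta^2$ for $0 \le \delta \le 1$, which is handled the same way via a second-derivative computation.

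Since the statement is attributed to Corollary~24.7 in~\cite{friezekaronski2015}, the honest approach is simply to cite that reference rather than reprove it; the sketch above indicates the standard derivation for completeness. The only mildly technical point is verifying the two numerical inequalities relating the exponent $\delta - (1+\delta)\log(1+\delta)$ (respectively its lower-tail analogue) to $-\tfrac13\delta^2$ and $-\tfrac12\delta^2$ on $[0,1]$, but these are routine and there is no real obstacle. In practice, I would state the lemma with the citation and move on, as the paper does.
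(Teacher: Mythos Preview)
Your proposal is correct and aligns with the paper's treatment: the paper gives no proof at all for this lemma, simply stating it with the citation to Corollary~24.7 in~\cite{friezekaronski2015}. Your sketch of the standard moment-generating-function derivation is accurate (including the auxiliary inequalities on $[0,1]$), but as you yourself note, the appropriate move here is exactly what the paper does---cite and move on.
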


Lemma~\ref{lemma:chernoff} also lets us bound the maximum degree $np$ of $\mathbb G_{n,n,p}$ at the threshold. The degree of a vertex is binomial with mean $np$; taking $\delta = 3(\frac{\log n}{n})^{1/4}$, we see that the probability that a vertex has degree more than $(1+\delta)np$ is less than $\exp(-\frac13 \delta^2 \sqrt{2n \log n}) = \exp(-3\sqrt2 \log n) = n^{-3\sqrt2}$. Thus, the expected number of vertices with degree more than $(1+\delta)np$ goes to $0$ as $n \to \infty$. Since $\delta \to 0$ as $n \to \infty$ as well, with high probability the maximum degree is asymptotic to $np \sim \sqrt{2n \log n}$.

\subsection{Proof of Lemma~\ref{lemma:poisson-pairs}}

We write the random variable $\mathbf N$ of Lemma~\ref{lemma:poisson-pairs} as $\mathbf N_0 + \mathbf N_1$, where $\mathbf N_0$ is the number of pairs $\{x_1,x_2\} \subset X$ with no common neighbors in $Y$, and $\mathbf N_1$ is the number of triples $\{x_1, x_2, y\}$ with $x_1, x_2 \in X$ and $y \in Y$ such that $x_1$ and $x_2$ are both adjacent to $y$ but have no other common neighbors.

For two fixed vertices $x_1, x_2$, the probability that they have no common neighbors is $(1-p^2)^n \le e^{-np^2} = \frac1{n^2} \cdot \frac1{\log n} \cdot e^{-c}$. There are $\binom n2 \le \frac{n^2}{2}$ choices of $\{x_1, x_2\}$, so $\mathbb E[\mathbf N_0] \le e^{-c}/(2 \log n)$, and with high probability $\mathbf N_0 = 0$. Therefore it is enough to focus on $\mathbf N_1$ for the remainder of the proof.

To analyze $\mathbf N_1$, we compute $\mathbb E[\mathbf N_1(\mathbf N_1-1) \cdots (\mathbf N_1-t+1)]$ for a constant $t$: the expected number of sequences of $t$ distinct triples $\{x_{i1}, x_{i2}, y_i\}$ with $1 \le i \le t$ such that for all $i$, $y_i \in Y$ is the unique common neighbor of $x_{i1}, x_{i2} \in X$. If this expected value is asymptotic to $e^{-ct}$, then by the method of moments, we may conclude that $\mathbf N_1$ is asymptotically Poisson with rate $e^{-c}$, proving the lemma.

We employ the usual approach to computing the expected value: for each sequence of $t$ distinct triples $\{x_{i1}, x_{i2}, y_i\}$, we compute the probability of the event that $y_i$ is the unique common neighbor of $x_{i1}$ and $x_{i2}$, and sum over all these probabilities. We discard sequences for which this event is automatically impossible, such as ones in which two vertices in $X$ appear together in two triples.

Given such a sequence, let $H$ be the graph formed by the vertices in the triples and the required edges $x_{i1} y_i, x_{i2} y_i$ for $i=1, \dots, t$. The vertices $x_{i1}, x_{i2}$ have degree at least $1$ in $H$, and the vertices $y_i$ have degree at least $2$; therefore, if $H$ has $m$ edges, then it has at most $m/2$ distinct vertices in $Y$ and at most $m$ distinct vertices in $X$.

We begin by ruling out all cases where $m < 2t$. Here, the expected number of graphs $H$ of this type that appear in the random graph is at most $n^{3m/2} p^m$, and the number of sequences of triples that result in a particular graph $H$ is bounded by a constant dependent on $t$. For a given vertex $y \in Y$ other than $y_1, \dots, y_t$, let $A_{iy}$ be the event that $y$ is adjacent to both $x_{i1}$ and $x_{i2}$. By Bonferroni's inequalities, we can get an upper bound for the probability that none of the $A_{iy}$ occur by truncating the inclusion-exclusion sum at the level of pairwise overlaps:
\[
    \Pr\left[\bigwedge_{i=1}^t \neg A_{iy}\right] \le 1 - \sum_{i=1}^t \Pr[A_{iy}] + \sum_{i=1}^t \sum_{j=i+1}^t \Pr[A_{iy} \land A_{jy}].
\]
For all $i$, $\Pr[A_{iy}] = p^2$, while $\Pr[A_{iy} \land A_{jy}]$ is $p^4$ if $\{x_{i1}, x_{i2}\}$ is disjoint from $\{x_{j1}, x_{j2}\}$, and $p^3$ if not. Therefore the right-hand side of the inequality is at most $1 - t p^2 + \binom t2 p^3$. The event $\bigwedge_{i=1}^t \neg A_{iy}$ occurs independently for at least $n-t$ vertices in $Y$ and outside $H$, so the probability that no pair $x_{i1}, x_{i2}$ has a common neighbor other than $y_i$ is at most
\begin{align*}
    \left(1 - t p^2 + \binom t2 p^3\right)^{n-t} &\le \exp\left(-t(n-t)p^2 + \binom t2 (n-t) p^3\right)\\
     & = e^{-tnp^2} \cdot e^{t^2 p^2} \cdot e^{(n-t)t(t-1)p^3/2}.
\end{align*}
The last two factors $e^{t^2 p^2}$ and $e^{(n-t)t(t-1)p^3/2}$ are both asymptotic to $1$, since $p^2 \to 0$ and $np^3 \to 0$ as $n \to \infty$, while $t$ is a constant. Therefore the probability is asymptotic to $e^{-tnp^2}$, which is less than $e^{-t(2\log n)} = n^{-2t}$ for sufficiently large $t$.

In conclusion, the expected number of sequences of triples that result in an $m$-edge graph $H$ is $O(n^{3m/2} p^m n^{-2t})$, which is $n^{3m/2 - m/2 - 2t}=n^{m - 2t}$ up to a poly-logarithmic factor. For each $m<2t$, this expected value goes to $0$ as $n \to \infty$.

The remaining contribution to $\mathbb E[\mathbf N_1(\mathbf N_1-1) \cdots (\mathbf N_1-t+1)]$ arises from sequences of triples $\{x_{i1}, x_{i2}, y_i\}$ where all $2t$ edges are distinct. The number of ways to choose the vertices for such a sequence is less than $\binom n2^t \cdot n^t$ and at least $\frac{n!}{(n-2t)! 2^t} \cdot \frac{n!}{(n-t)!}$, both of which are asymptotic to $n^{3t}/2^t$. For each such sequence, the probability that for all $i$, $y_i$ is the unique common neighbor of $x_{i1}$ and $x_{i2}$ is $p^{2t} (1-p^2)^{t(n-1)}$, so 
\begin{align*}
\mathbb E[\mathbf N_1(\mathbf N_1-1) \cdots (\mathbf N_1-t+1)] 
    &\sim \frac{n^{3t}}{2^t} p^{2t} (1-p^2)^{t(n-1)} \\
    &\sim \frac{n^{3t}}{2^t} \cdot \frac{(2 \log n)^t}{n^t} \cdot e^{-tnp^2} \\
    &= n^{2t} (\log n)^t e^{-2t \log n - t \log \log n - c t} \\
    &= e^{-ct},
\end{align*}
as desired. \hfill\qed

\subsection{Proof of Lemma~\ref{lemma:no-big-obstacles}}

In Lemma~\ref{lemma:no-big-obstacles}, we show that for $p = \sqrt{(2 \log n)/n}$, with high probability, $\mathbb G_{n,n,p}$ contains no minimal obstacles $(S,T)$ with $|S| \ge 3$. For $|S|\ge 3$, if an obstacle is minimal, then in particular $|T| = |S|-1$; otherwise, we may obtain a smaller obstacle by removing any element from $S$.

Our argument is to show that the expected number of pairs $(S,T)$ that are minimal obstacles goes to $0$ as $n \to \infty$. We let $k = |S|$, so that $|T|=k-1$, and consider three ranges of $k$, for which we give separate arguments. Note that these three ranges do cover all possible values of $k$ for sufficiently large $n$.

\textbf{Case 1.} $3 \le k \le n/10$.

By minimality of the obstacle $(S,T)$, each vertex in $T$ must have at least $3$ neighbors in $S$: if there is a $y \in T$ with neighbors only $x_1,x_2 \in S$, then we obtain a smaller example by excluding $x_1$ from $S$ and $y$ from $T$. Therefore there are at least $3(k-1)$ edges between $S$ and $T$.

There are $\binom nk \binom n{k-1}$ potential choices of $(S,T)$ for each value of $k$; for each choice, the probability that there are at least the required $3(k-1)$ edges between $S$ and $T$ is at most $\binom{k(k-1)}{3(k-1)} p^{3(k-1)}$. 

For each $y \in Y \setminus T$, the probability that $y$ has at most one neighbor in $S$ is $(1-p)^k + kp(1-p)^{k-1}$, which we bound as follows:
\begin{align*}
      (1-p)^k + kp(1-p)^{k-1} & = (1-p)^{k-1}(1 - p + kp)\\
       & \le (1-p)^{k-1}(1+p)^{k-1}\\
       & \le e^{-(k-1)p^2}.  
\end{align*}
There are $n-k+1$ vertices $y$, for which the chances of having at most one neighbor in $S$ are independent. By the case, $n-k+1 \ge 0.9n$, so the probability that no vertex in $Y \setminus T$ has more than one neighbor in $S$ is at most $e^{-0.9n(k-1)p^2}$;
since $p =\sqrt{\frac{2 \log n}{n}}$, this simplifies to $n^{-1.8(k-1)}$.

We conclude that the expected number of minimal obstacles $(S,T)$ for a particular value of $k$ is at most
\[
    \binom nk \binom n{k-1} \binom{k(k-1)}{3(k-1)} p^{3(k-1)} n^{-1.8(k-1)}.
\]
Taking $\binom n{k-1} \le (\frac{ne}{k-1})^{k-1}$, $\binom nk \le (\frac{ne}{k})^{k}$,  and $\binom{k(k-1)}{3(k-1)} \le (\frac{k(k-1)e}{3(k-1)})^{3(k-1)} = (\frac{ke}{3})^{3(k-1)}$, 
we get a further upper bound of
\[
    \left(\frac{ne}{k}\right)^k \left(\frac{ne}{k-1}\right)^{k-1} \left(\frac{ke}{3}\right)^{3(k-1)} p^{3(k-1)} n^{-1.8(k-1)} < n \left( \frac{n^2 e^2}{k(k-1)} \cdot \frac{k^3 e^3}{27} \cdot p^3 \cdot n^{-1.8}\right)^{k-1}.
\]
First, we inspect this upper bound for small values of $k$, specifically $3\le k\le 7$. Here, the base of the exponent is $O(n^{0.2} p^3) = O(n^{-1.3} (\log n)^{1.5})$. Thus it is always bounded by a function that goes to $0$ as $n \to \infty$ and, in particular, is strictly less than $1$ for sufficiently large $n$. Consequently, the expected number of minimal obstacles for a fixed $k$ in this range is bounded by $O(n(n^{-1.3} (\log n)^{1.5})^2)=O(n^{-1.6} (\log n)^3)$. 

Second, over the range of $8 \le k \le n/10$, we have
\[
    \frac{n^2e^2}{k(k-1)} \cdot \frac{k^3e^3}{27} \cdot p^3 \cdot n^{-1.8} = O(n^{1.2} p^3) = O(n^{-0.3} (\log n)^{1.5}).
\]
Similarly, the base of the exponent is strictly less than $1$ for sufficiently large $n$.
This means that the expected number of minimal obstacles for a fixed $k$ in this range is at most $O(n(n^{-0.3} (\log n)^{1.5})^7)=O(n^{-1.1} (\log n)^{10.5})$. Thus the total expected number of minimal obstacles $(S, T)$ with $3 \le k \le n/10$ is at most $O(n^{-0.1} (\log n)^{10.5}) + O(n^{-1.6} (\log n)^3) = O(n^{-0.1} (\log n)^{10.5})$, which goes to $0$ as $n \to \infty$. Therefore, with high probability, no minimal obstacle falling under Case 1 occurs in $\mathbb G_{n,n,p}$.

\textbf{Case 2.} $k \ge 32/p$ and $n-k \ge 32/p$.

In this case, we observe that there must be at most $n-k+1$ edges between $S$ and $Y \setminus T$, but that number of edges is distributed binomially with mean $\mu = pk(n-k+1)$; in particular, $n-k+1 < \frac12 \mu$, so a Chernoff bound (as in Lemma~\ref{lemma:chernoff}) gives an upper bound on the probability of $\exp(-\mu/8) = \exp(-pk(n-k+1)/8)$.

Given the case, we can say either that $pk \ge 32$ and $n-k+1 \ge n/2$, or that $p(n-k+1) \ge 32$ and $k \ge n/2$; in either case, $\exp(-pk(n-k+1)/8) \le \exp(-32(n/2)/8) = e^{-2n}$. There are fewer than $4^n$ pairs $(S,T)$, so the expected number of them falling under this case that are minimal obstacles is at most $4^n e^{-2n}$. Therefore, with high probability, no minimal obstacle falling under Case 2 occurs in $\mathbb G_{n,n,p}$.

\textbf{Case 3.} $n-k \le n/(4e)$.

The degree distribution of vertices in $Y$ is binomial with mean $np$, so by Lemma~\ref{lemma:chernoff}, the probability that a vertex has degree $\frac23 np$ or less is at most $\exp(-np/18)$. The expected number of such vertices in $Y$ is $n \exp(-np/18)$, which goes to $0$ as $n \to \infty$, so with high probability all vertices in $Y$ have degree at least $2np/3$; in particular, for large enough $n$, this is at least $np/2 + 1$.

Let $j = n-k+1$. If a pair $(S,T)$ as in the case is a minimal obstacle, then each of the $j$ vertices in $Y \setminus T$ can have at most one neighbor in $S$, so each of these vertices must have at least $np/2$ neighbors among the $j-1$ vertices in $X \setminus S$. In particular, there must be at least $jnp/2$ edges between $X \setminus S$ and $Y \setminus T$. The expected number of times this can occur for a given value of $j$ is bounded by
\begin{align*}
        \binom nj \binom n{j-1} \binom{j(j-1)}{jnp/2} p^{jnp/2} & \le n^{2j-1} \left(\frac{j(j-1)e}{jnp/2}\right)^{jnp/2} p^{jnp/2}\\
         & \le  n^{-1} \left(n^2 \left(\frac{2e(j-1)}{n}\right)^{np/2} \right)^j.
\end{align*}

By the case, $2e(j-1)/n \le 1/2$, and for sufficiently large $n$, $n^2 (1/2)^{np/2} \le 1/2$ as well, so the expected number of pairs $(S,T)$ satisfying the case for each $j$ is bounded by $n^{-1}/2^j$. We can now take a sum, not just over all $j$ in the case, but over all $j\ge 0$, and still get a quantity which goes to $0$ with $n$; therefore, with high probability, no minimal obstacle falling under Case 3 occurs in $\mathbb G_{n,n,p}$. \hfill\qed

\section*{Acknowledgments}
GC was partially supported by NSF grant DMS-2154331.

\section*{Conflict of Interest Statement}
The authors declare that they have no conflict interest regarding
the publication of this paper.

\bibliographystyle{plain}
\bibliography{Reference.bib} 

\end{document}